\def\spam{\mathop{\rm span}\nolimits}
\def\Harm{\mathop{\rm Harm}\nolimits}
\def\Ang{\mathop{\rm Ang}\nolimits}
\def\core{\mathop{\rm harm}\nolimits}
\def\trace{\mathop{\rm trace}\nolimits}
\def\geg{{\mathop{\rm geg}\nolimits}}
\def\cgeg{{\DD,d}}
\def\rank{\mathop{\rm rank}\nolimits}
\def\TP{\mathop{\rm TP}\nolimits}
\def\diag{\mathop{\rm diag}\nolimits}
\def\supp{\mathop{\rm supp}\nolimits}
\def\pmat#1{\begin{pmatrix}#1\end{pmatrix}}
\def\question#1{{\bf Question: }#1}
\def\question#1{}
\def\rev{{\rm rev}}
\def\cC{{\cal C}}
\def\cE{{\cal E}}
\def\cS{{\cal S}}
\def\cF{{\cal F}}
\def\R{\mathbb{R}}
\def\CC{\mathbb{C}}
\def\NN{\mathbb{N}}
\def\DD{\mathbb{D}}
\def\FF{\mathbb{F}}
\def\Cd{\C^d}
\def\Rd{\R^d}
\def\C{\mathbb{C}}
\def\SS{\mathbb{S}}
\newcommand{\RR}{\mathbb{R}}
\newtheorem{theorem}{Theorem}[section]
\newtheorem{corollary}{Corollary}[section]
\newtheorem{lemma}{Lemma}[section]
\newtheorem{example}{Example}[section]
\newtheorem{remark}{Remark}[section]
\newtheorem{proposition}{Proposition}[section]
\newenvironment{proof}{{\noindent \it
Proof.}}{\hfill$\Box$\medskip}
\newif\ifdraft\def\draft{\drafttrue\hoffset=.8truecm\showlabeltrue
\def\comment##1{{\bf comment: ##1}}
\headline={\sevenrm \hfill \ifx\filenamed\undefined\jobname\else\filenamed\fi%
(.tex) (as of \ifx\updated\undefined???\else\updated\fi)
 \TeX'ed at {\hour\time\divide\hour by 60{}%
\minutes\hour\multiply\minutes by 60{}%
\advance\time by -\minutes
\the\hour:\ifnum\time<10{}0\fi\the\time\  on \today\hfill}}
}
\def\inpro#1{\langle#1\rangle}
\def\ip#1{\langle\kern-.28em\langle#1\rangle\kern-.28em\rangle_\nu}
\def\cP{{\cal P}}
\def\cU{{\cal U}}
\def\cJ{{\cal J}}
\def\cV{{\cal V}}
\def\norm#1{\Vert#1\Vert}
\def\openR{{{\rm I}\kern-.16em {\rm R}}}
\def\Fd{\FF^d}
\let\ga\alpha
\let\gb\beta
\let\gG\Gamma
\let\gd\delta
\let\gD\Delta
\let\gep\varepsilon
\let\gth\theta
\let\gl\lambda
\let\gL\Lambda
\let\gs\sigma
\let\go\omega
\let\gO\Omega
\let\ga\alpha
\let\gG\Gamma
\let\gb\beta
\let\gd\delta
\let\gs\sigma
\def\inpro#1{\langle#1\rangle}
\def\Hom{\mathop{\rm Hom}\nolimits}
\def\Iff{\hskip1em\Longleftrightarrow\hskip1em}
\def\Implies{\hskip1em\Longrightarrow\hskip1em}
\def\formeq{\the\sectionno.\the\equationno}  
\def\elabel#1/#2/#3/{\global\advance\equationno by 1 %
\ifx#1\empty\else\emember#1%
\ifshowlabel\marginal{\string#1}\fi\fi%
\ifmmode\eqno{#3(\formeq#2)}\else#3\formeq#2\fi} 
\def\makeblanksquare#1#2{
\dimen0=#1pt\advance\dimen0 by -#2pt
      \vrule height#1pt width#2pt depth0pt\kern-#2pt
      \vrule height#1pt width#1pt depth-\dimen0 \kern-#1pt
      \vrule height#2pt width#1pt depth0pt \kern-#2pt
      \vrule height#1pt width#2pt depth0pt
}
\title{\bf 
Real and complex spherical designs and their Gramian
}
\author{Shayne Waldron\\
 \\
Department of Mathematics \\ University of Auckland\\
Private
Bag 92019, Auckland, New Zealand\\
e-mail: waldron@math.auckland.ac.nz}
\date{\today}
\begin{document}

\maketitle 

\begin{abstract}
If a (weighted) spherical design is defined as an integration 
(cubature) rule for a unitarily invariant space $P$ of polynomials 
(on the sphere),
then any unitary image of it is also such a spherical design. 
It therefore follows that such spherical designs are determined by their
Gramian (Gram matrix). We outline a general method to obtain such
a characterisation as the minima of a function of the Gramian,
which we call a potential. 
This characterisation
can be used for the numerical and analytic construction of spherical designs. 
When the space $P$ of polynomials 
is not irreducible under the action of the unitary group,
then the potential is not unique. 
In several cases of interest,
e.g., spherical $t$-designs and half-designs, we use this flexibility
to provide potentials with a very simple form.
We then use our results to develop certain aspects of the theory 
of real and complex spherical designs for unitarily invariant 
polynomial spaces. 


\end{abstract}

\bigskip
\vfill

\noindent {\bf Key Words:}
Gramian (Gram matrix),
spherical $t$-designs,
spherical half-designs,
tight spherical designs,
finite tight frames,
integration rules,
cubature rules,
cubature rules for the sphere,
reproducing kernel,
positive definite function,
Gegenbauer polynomials,
Zernike polynomials,
complex spherical design,
potential,
frame force,
codes,


\bigskip
\noindent {\bf AMS (MOS) Subject Classifications:} 
primary
05B25, \ifdraft Combinatorial aspects of finite geometries \else\fi
05B30, \ifdraft (Other designs, configurations) \else\fi
42C15, \ifdraft General harmonic expansions, frames  \else\fi
65D30; \ifdraft (Numerical integration) \else\fi
\quad
secondary
94A12. \ifdraft (Signal theory [characterization, reconstruction, etc.]) \else\fi
51M20, \ifdraft Polyhedra and polytopes; regular figures, division of spaces \else\fi

\vskip .5 truecm
\hrule
\newpage

\section{Introduction}

Let $\gs$ be the normalised surface area measure on the unit sphere $\SS$
in $\Rd$ or $\Cd$.
A ({\bf weighted}) {\bf spherical design} (for $P$) is a sequence of points $v_1,\ldots,v_n$ in $\SS$
and {\bf weights} $w_1,\ldots,w_n\in\RR$, 
$w_1+\cdots+w_n=1$,
for which the integration (cubature) rule 
\begin{equation}
\label{cuberule}
\int_\SS p(x)\, d\gs(x) = \sum_{j=1}^n  w_j p(v_j),
\end{equation}
holds for all $p$ in a 
finite dimensional space of polynomials $P$.
This is essentially the first of the four definitions given in \cite{S01}.
We say that a polynomial $p$ (or a space of polynomials) is
{\bf integrated} by the spherical design (integration/cubature rule)
if (\ref{cuberule}) holds.
The existence of 
spherical designs for constant weights, i.e.,
 $w_j={1\over n}$,
and  $n$ sufficiently large, 
was proved in  \cite{SZ84}.
In applications, it is often required that $w_j\ge0$.  
The common choices for $P$ are unitarily invariant, 
i.e.,  for $U$ unitary, $p\circ U\in P$, $\forall p\in P$.
In the real case, the unitary (inner product preserving) maps are the orthogonal transformations. 
For such a space $P$, the unitary invariance of the measure $\gs$ implies
that $(Uv_j),(w_j)$ is also a spherical design when $U$ is  unitary,  
by the calculation
$$ \sum_{j} w_j p(Uv_j)  
=\int_\SS (p\circ U)(x)\, d\gs(x)
=\int_\SS p(x)\, d\gs(x), \qquad \forall p\in P. $$

The {\bf Gramian} (or {\bf Gram matrix}) of a sequence of vectors
$(v_j)$ is the Hermitian 
matrix of inner products $[\inpro{v_j,v_k}]$.
We say that sequences of vectors $(v_j)$ and $(u_j)$ are 
{\bf unitarily equivalent} if $u_j=Uv_j$, $\forall j$, where $U$
is unitary. 
Since
$$ \inpro{u_j,u_k}=\inpro{Uv_j,Uv_k}=\inpro{v_j,v_k}, $$
a sequence of vectors is defined up to unitary equivalence by its 
Gramian (see \cite{W18}). 
Combining this with our previous observation gives:

\begin{itemize}
\item The real or complex spherical designs $(v_j)$ for a unitarily invariant polynomial space
$P$ are determined by the Gramian of $(v_j)$.
\end{itemize}

It should therefore be possible to express the condition of $(v_j)$ being a spherical 
design in terms of $\inpro{v_j,v_k}$, $1\le j,k\le n$, and the weights $(w_j)$ if these are 
not constant. 
The primary objective of this paper is to give such a characterisation 
for being a spherical design, which is as simple as possible.
Some key features of our approach are:
\begin{itemize}
\item Because the reproducing kernel $K(x,y)$ for a unitarily invariant 
polynomial space
$P$ is a function of $\inpro{x,y}$, we are able to find a 
``potential'' $A_P\bigl([\inpro{v_j,v_k}])\ge0$ 
whose zeros are the spherical designs for $P$.
\item The potential $A_P$ can be given by a univariate polynomial $F$,
which is not unique when $P$ is not irreducible under the action
of the unitary group (see Table \ref{P-designlist}).
\item The polynomial space $P$ and the corresponding real or complex spherical designs
can be described by a finite set of indices $L\subset\NN$ or $\tau\subset\NN^2$,
respectively, which index the irreducible subspaces of $P$.
\end{itemize}

Motivated by a careful analysis of the real 
spherical designs \cite{BB09},
we obtain a unified theory of the most general real and complex
spherical designs, which includes:

\begin{itemize}
\item A general variational characterisation for all types of designs (Theorem \ref{varcharthm}).
\item New characterisations for real spherical designs, including $t$-designs
	(Theorem \ref{tdesignvarthm}) and half-designs (Theorem \ref{mweightsuffthm}).
\item New characterisations for complex spherical designs (Theorem \ref{HpqF},
Theorem \ref{weightedversionHompq}).
\item A natural description for real and complex projective spherical designs
(\S \ref{projectivedesignsect}).
\item A description of the Gegenbauer polynomials that appear naturally
in the analysis of complex
spherical designs as orthogonal polynomials (of two variables),
together with results about their products and sums 
(\S\ref{Gegenorthsect}, \S\ref{Gegenprods}).
\item A unified approach to bounds on the numbers of points in a spherical design
	(\S\ref{boundsection}).
\item Unified results about the
	number of vectors in $A$-sets and $s$-angular sets (\S\ref{morecomplexbounds}).
\end{itemize}
As much as possible, we 
treat the real and complex cases simultaneously,
with $\FF=\RR,\CC$.

\section{Basic definitions}

Sometimes  
it is convenient to describe a weighted spherical design for $P$ as
a sequence of (possibly not unit) vectors $(v_j)$, 
where the weights are inferred from the $\norm{v_j}$ by
\begin{equation}
\label{wjdef}
w_j =w_j^{(m)} := {\norm{v_j}^m\over\sum_\ell \norm{v_\ell}^m},
\quad\hbox{which we call the {\bf $m$-weights}}.
\end{equation}
Designs with constant weights are called {\bf unweighted}, {\bf classical}
or simply {\bf designs}.
%
%

It is a subtle but important point, which follows from (\ref{cuberule}), 
that a spherical design depends only the restriction of $P$ to the sphere,
i.e. the space of harmonic polynomials
$$ \core(P):=P|_\SS, $$
which we will call the {\bf harmonic part} of the polynomial space $P$.

Choices for the (unitarily invariant) polynomial space $P$ of interest include:
\begin{align*}
& \Pi_t(\Rd) = \hbox{polynomials of degree $\le t$ on $\Rd$}
\quad \hbox{({\bf spherical $t$-designs})}, \cr
& \Hom_m(\Rd) = \hbox{homogeneous polynomials of degree $m$ on $\Rd$} 
\quad \hbox{({\bf spherical half-designs})}, \cr 
& \Harm_m(\Rd) = \hbox{harmonic polynomials in $\Hom_m(\Rd)$} 
\quad \hbox{({\bf spherical designs of harmonic index $m$})}, \cr
& \Hom_{t,t}(\Cd) =\spam\{|\inpro{\cdot,v}|^{2t}:v\in\Cd\} \quad 
\hbox{({\bf spherical $(t,t)$-designs}, {\bf projective $t$-designs})}.  
\end{align*}
The half-designs for 
$P=\Hom_{2t}(\Rd)$ are also called
({\bf real}) {\bf spherical $(t,t)$-designs}. 

\begin{example}
From the observation
$$ \int_\SS \norm{x}^{2k}p(x)\, d\gs(x)
= \int_\SS p(x)\, d\gs(x)
= \sum_j w_j p\Bigl({v_j\over\norm{v_j}}\Bigr)
= \sum_j w_j \bigl(\norm{\cdot}^{2k} p\bigr)\Bigl({v_j\over\norm{v_j}}\Bigr), $$
it follows that a spherical design for $P$ also integrates the spaces
$$ P^- = \bigl\{ q: \norm{\cdot}^{2j}q\in P, \exists j\ge0 \bigr\}, \qquad
P^+  = \bigl\{ \norm{\cdot}^{2k}p: p\in P,k\ge0\bigr\}, $$
with $\core(P^-)=\core(P^+)=\core(P)$. 
In particular, for $P=\Hom_m(\Rd)$, we have
$$ P^- = \Hom_m(\Rd) \oplus \Hom_{m-2}(\Rd) \oplus \Hom_{m-4}(\Rd) \oplus \cdots , \quad
P^+ = \bigoplus_{k=0}^\infty \norm{\cdot}^{2k}\Hom_m(\Rd).  $$
Because of this, \cite{KP11} refer to the spherical designs for $P=\Hom_m(\Rd)$
as the {spherical half-designs} (of order $m$).
\end{example}

Every homogeneous polynomial $p\in\Hom_k(\Rd)$ of degree $k$ 
can be written uniquely
\begin{equation}
\label{homharmrep}
p(x) = \sum_{0\le j\le{k\over2}} \norm{x}^{2j} p_{k-2j}(x)
= \sum_{j=0}^{[{k\over2}]} \norm{x}^{2j} p_{k-2j}(x), 
\end{equation}
where $p_{k-2j}\in\Harm_{k-2j}(\Rd)$, 
and the restriction map
$$\Harm_k(\Rd)\to L_2(\SS):f\mapsto f|_\SS$$
is injective, with image denoted $\Harm_k(\SS)$. We will freely identify spaces of 
harmonic functions on $\Rd$ and $\SS$ ({\bf solid} and {\bf surface} spherical harmonics).
It follows from the irreducibility of the summands above \cite{ABR01},
that every unitarily invariant (invariant under orthogonal transformations) polynomial space on $\Rd$
can be written uniquely as a direct sum
$$ P= \bigoplus_{(j,k)\in\cJ} \norm{\cdot}^{2j}\Harm_{k-2j}(\Rd), $$
for a subset $\cJ$ of the indices $\{(j,k):0\le j\le{k\over2}\}$.
For the purpose of integration on 
$\SS$, it suffices to 
consider the (possibly lower dimensional) space of harmonic polynomials
\begin{equation}
\label{PLobservation}
\core(P)=P|_\SS = \bigoplus_{\ell\in L} \Harm_{\ell}(\SS)
\approx \bigoplus_{\ell\in L} \Harm_{\ell}(\Rd),
\quad L:=\{k-2j:(j,k)\in\cJ\}\subset\NN.
\end{equation}
We note, in particular
$$ \core(\Pi_m(\Rd)) = \bigoplus_{j=0}^m\Harm_{j}(\Rd), \qquad
\core(\Hom_m(\Rd)) = \bigoplus_{0\le j\le {m\over2}}\Harm_{m-2j}(\Rd). $$
A necessary condition for $(v_j)$ to integrate $\Hom_m(\Rd)$
with the $m$-weights (\ref{wjdef}) is that
\begin{align}
\label{mweights1}
\int_\SS\int_\SS \inpro{x,y}^m d\gs(x)\,d\gs(y)
&= \int_\SS \sum_{j=1}^n {\norm{v_j}^m\over\sum_\ell\norm{v_\ell}^m} \inpro{{v_j\over\norm{v_j}},y}^m d\gs(y) \cr
&= \sum_{j=1}^n \sum_{k=1}^n 
{\norm{v_j}^m\norm{v_k}^m \over(\sum_\ell\norm{v_\ell}^m)^2}
\inpro{{v_j\over\norm{v_j}},{v_k\over\norm{v_k}}}^m \cr
&= {1\over (\sum_\ell\norm{v_\ell}^m)^2}\sum_{j=1}^n \sum_{k=1}^n 
\inpro{v_j,v_k}^m.
\end{align}
To also integrate $\Hom_{m-1}(\Rd)$ with these $m$-weights 
a necessary condition is
\begin{align}
\label{mweights2}
\int_\SS\int_\SS \inpro{x,y}^{m-1} d\gs(x)\,d\gs(y)
&= \int_\SS \sum_{j=1}^n {\norm{v_j}^m\over\sum_\ell\norm{v_\ell}^m} \inpro{{v_j\over\norm{v_j}},y}^{m-1} d\gs(y) \cr
&= \sum_{j=1}^n \sum_{k=1}^n 
{\norm{v_j}^m\norm{v_k}^m \over(\sum_\ell\norm{v_\ell}^m)^2}
\inpro{{v_j\over\norm{v_j}},{v_k\over\norm{v_k}}}^{m-1} \cr
&= {1\over (\sum_\ell\norm{v_\ell}^m)^2}\sum_{j=1}^n \sum_{k=1}^n 
\norm{v_j}\norm{v_k} \inpro{v_j,v_k}^{m-1}.
\end{align}
We will show that these conditions (for a weighted spherical $m$-design)
are also sufficient (Theorem \ref{mweightsuffthm}). 
It is most natural to view this result as a special case of a
very general variational characterisation of spherical designs, 
which we now describe.


\section{The variational characterisation of designs}

We generalise the surface area measure $\gs$ on $\SS$ to a measure
$\mu$ on a set $\gO\subset\Fd$. We say that a sequence of points $(v_1,\ldots,v_n)$ 
in $\gO$ and weights
$w=(w_j)$, $w_1+\cdots+w_n=1$, $w_j\ge0$,
is a {\bf weighted} ({\bf spherical}) {\bf design} 
 for a space $P$ of functions $\gO\to\FF$
(or simply a {\bf $P$-design}) if 
$$ \int_\gO p(x)\,d\mu(x) = \sum_{j=1}^n w_j p(v_j), \qquad\forall p\in P. $$
If point evaluation on $P$ is a continuous linear functional with respect
to the inner product
$$ \inpro{f,g}_\mu := \int_\gO f\overline{g}\,d\mu $$
given by $\mu$ 
(as is the case for $P$ finite dimensional),
then it can be represented by the {\bf reproducing kernel} 
$K=K_P:\gO\times\gO\to\FF$,
which is given by 
$$ f(x) = \int_\gO K(x,y)f(y)\,d\mu(y), \qquad\forall f\in P, $$
where $K(x,y) = \sum_s Y_s(x) \overline{Y_s(y)}$
for $(Y_s)$ an orthonormal basis of $P$.

\begin{theorem}
\label{varcharthm} (Variational characterisation) 
Let $\mu$ be a measure on $\gO$ with $\mu(\gO)=1$, $P$ be a finite dimensional
 space of 
integrable functions on $\gO$, and $H$
be the subspace of functions which are
orthogonal to the constants, i.e.,
$$ H=P\ominus\spam\{1\}=\{p\in P: \int_\gO p \,d\mu =0\}. $$
Let $\Phi=(v_1,\ldots,v_n)$, $v_j\in\gO$, and $w=(w_j)\in\RR^n$ be weights
with $w_1+\cdots+w_n=1$.   
Write $H$ as a 
direct sum $H=\oplus_\ell H^{(\ell)}$, with
$K_\ell$ the reproducing kernel of $H^{(\ell)}$ and $c_\ell>0$. 
Then
\begin{equation}
\label{Apotent}
A_{w,c}(\Phi) = 
A_{P,\mu,w,c}(\Phi) := 
\sum_{j=1}^n\sum_{k=1}^n w_j w_k 
\sum_\ell c_\ell 
K_\ell(v_j,v_k) \ge 0, 
\end{equation}
with equality if and only if $(v_j),(w_j)$ is a weighted spherical
design for $P$.
\end{theorem}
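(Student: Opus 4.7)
The plan is to recognise $A_{w,c}(\Phi)$ as a sum (over $\ell$, with positive coefficients $c_\ell$) of squared $L_2(\mu)$-norms of certain error elements in each $H^{(\ell)}$. Since any weighted rule with $w_1+\cdots+w_n=1$ integrates the constants exactly, being a $P$-design is equivalent to integrating every summand $H^{(\ell)}$ of $H$ exactly, and each such condition will correspond to the vanishing of one of the squared-norm terms.

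First I would use the reproducing property of $K_\ell$ on $H^{(\ell)}$ to rewrite the cubature error. For $p\in H^{(\ell)}$ one has $\int_\gO p\,d\mu=0$ (since $H^{(\ell)}\subset H\perp\spam\{1\}$) and $p(v_j)=\inpro{p,K_\ell(\cdot,v_j)}_\mu$, so
\begin{equation*}
\sum_{j=1}^n w_j\, p(v_j)-\int_\gO p\,d\mu
=\Bigl\langle p,\;e_\ell\Bigr\rangle_\mu,\qquad
e_\ell:=\sum_{j=1}^n w_j\,K_\ell(\cdot,v_j)\in H^{(\ell)}.
\end{equation*}
Because $H^{(\ell)}$ is finite dimensional, the Riesz representation implies that the rule integrates every $p\in H^{(\ell)}$ if and only if $e_\ell=0$, equivalently $\|e_\ell\|_\mu^2=0$.

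Next I would compute $\|e_\ell\|_\mu^2$ explicitly. Using Hermitian symmetry $\overline{K_\ell(y,v_k)}=K_\ell(v_k,y)$ together with the reproducing property applied to $K_\ell(\cdot,v_j)$ evaluated at $v_k$ gives $\inpro{K_\ell(\cdot,v_j),K_\ell(\cdot,v_k)}_\mu=K_\ell(v_k,v_j)$. Since the weights are real and $w_jw_k$ is symmetric in $j,k$, one obtains
\begin{equation*}
\|e_\ell\|_\mu^2=\sum_{j,k} w_j w_k\, K_\ell(v_k,v_j)=\sum_{j,k} w_j w_k\, K_\ell(v_j,v_k).
\end{equation*}
Multiplying by $c_\ell>0$ and summing over $\ell$ yields $A_{w,c}(\Phi)=\sum_\ell c_\ell\|e_\ell\|_\mu^2\ge0$, with equality iff every $e_\ell=0$. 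This last condition says the rule integrates each $H^{(\ell)}$, hence all of $H=\bigoplus_\ell H^{(\ell)}$; combined with the automatic integration of constants, it is precisely the statement that $(v_j),(w_j)$ is a weighted $P$-design.

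The argument is essentially bookkeeping with reproducing kernels, so I do not expect a genuine obstacle. The one point that needs care is the conjugation convention in the complex case: one must verify that the unconjugated bilinear expression $\sum_{j,k}w_jw_k K_\ell(v_j,v_k)$ really equals $\|e_\ell\|_\mu^2$, by combining Hermitian symmetry of $K_\ell$ with the reality of the weights and the $j\leftrightarrow k$ symmetry of the double sum, and that the ordering of the arguments of $K_\ell$ is compatible with the inner-product convention used for the reproducing kernel in the excerpt.
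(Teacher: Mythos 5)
Your proof is correct and is essentially the paper's argument in basis-free form: the paper expands each $K_\ell$ in an orthonormal basis $(Y_s^{(\ell)})$ of $H^{(\ell)}$ and rewrites the double sum as $\sum_\ell c_\ell\sum_s\bigl|\sum_j w_j Y_s^{(\ell)}(v_j)\bigr|^2$, which is exactly your $\sum_\ell c_\ell\|e_\ell\|_\mu^2$ with $e_\ell$ expanded in that basis. The conjugation and argument-ordering issues you flag are handled correctly, so nothing further is needed.
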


\begin{proof}
We first recall that the reproducing kernel $K_\ell$ for $H^{(\ell)}$
represents the point evaluations, i.e.,
$$ f(x) = \int_\gO K_\ell(x,y)f(y)\,d\mu(y), \qquad\forall f\in H^{(\ell)}, $$
and that
	$$ K_\ell(x,y) = \sum_s Y_s^{(\ell)}(x) \overline{Y_s^{(\ell)}(y)}, $$
where $(Y_s^{(\ell)})$ is an orthonormal basis for $H^{(\ell)}$.
We therefore compute
\begin{align*}
A_{w,c}(\Phi) 
& := 
\sum_{j=1}^n\sum_{k=1}^n w_j w_k
\sum_\ell c_\ell 
 K_\ell(v_j,v_k) \cr
& = \sum_\ell c_\ell \sum_{j=1}^n\sum_{k=1}^n w_j w_k 
\sum_s Y_s^{(\ell)}(v_j) \overline{Y_s^{(\ell)}(v_k)} \cr
&= \sum_{\ell} c_\ell \sum_s
\left(\sum_{j=1}^n w_j Y_s^{(\ell)}(v_j) \right)
\left(\sum_{k=1}^n w_k \overline{Y_s^{(\ell)}(v_k)} \right) \cr
&= \sum_{\ell=1} c_\ell \sum_s
\Bigl\vert \sum_{j=1}^n w_j Y_s^{(\ell)}(v_j) \Bigr\vert^2 \ge 0,
\end{align*}
with equality if and only if
$$ \sum_{j=1}^n w_j Y_s^{(\ell)}(v_j) = 0 = \int_\gO Y_s^{(\ell)}(x)\,d\mu(x), 
\qquad\forall s,\ \forall\ell. $$
which is equivalent 
to $(v_j),(w_j)$ being a $P$-design
(by linearity and the fact $\sum_j w_j=1$ ensures 
that the constants are integrated).
\end{proof}

This result is essentially an abstract version of \cite{SW09} (Theorem 3),
which was for spherical designs on the real sphere, with $P=\Pi_t(\Rd)$.
For examples of such potentials, we will use a variety of subscripts for $A$ indicating
parameters that it depends on, and which we choose to emphasize in a particular context.

We call the $A_{w,c}(\Phi)$ of (\ref{varcharthm})
a {\bf potential} for the $P$-designs (with weights $w$), given by
$$ K:=\sum_\ell c_\ell K_\ell. $$
The potential for $P$ when there is a single summand and $c_1=1$ is 
called the {\bf canonical potential}, and is denoted by $A_P$.
This can be obtained by taking $\oplus_\ell H^{(\ell)}$ to be an orthogonal 
direct sum and $c_\ell=1$, $\forall\ell$.
It is also convenient at times
to add a (positive) constant $b_0$ to a potential, to obtain
a {\bf potential with constant} $B(\Phi)=A_{w,c}(\Phi)+b_0$, with (\ref{Apotent})
then replaced by $B(\Phi)\ge b_0$.

There are {\em many} possible potentials for $P$-designs,
and so $P$-designs are ``universally optimal distributions of points'' (cf.\ \cite{CK07})
for the above class of such potentials.
Given a tractable formula for a potential,
$P$-designs 
can be constructed
numerically 
(for sufficiently many points) 
by minimising it (to zero) 
\cite{BGMPV21}, \cite{EW25}. We now investigate
the case of real and complex spherical designs in detail,
where $K(x,y)$ is a univariate function $F$ of $\inpro{x,y}$,
which we also refer to as (giving) a {\bf potential}, and hence $A_{w,c}(\Phi)$ is
indeed function of the Gramian of $\Phi$.
Potentials which depend on triples of points, and hence not the
Gramian in general, have recently been considered by \cite{BFGMPV22}.


\section{Real spherical designs}
\label{realdesignsect}


We have observed in (\ref{PLobservation}) that every  
unitarily invariant polynomial 
space $P$ restricted to the real sphere has the form 
\begin{equation}
\label{invariantPform}
P|_\SS=P_L:=\bigoplus_{\ell\in L} \Harm_\ell(\SS) \qquad
\hbox{(orthogonal direct sum)},
\end{equation}
with $L\subset\NN=\{0,1,2,\ldots\}$ an index set
(which is finite for $P|_\SS$ finite-dimensional). 
For the irreducible subspace $\Harm_\ell(\Rd)$, the reproducing kernel is
\begin{equation}
\label{Kadditionform}
K_\ell^{(d)}(x,y) 
:= \norm{x}^\ell\norm{y}^\ell Q_\ell^{(d)}
\Bigl({\inpro{x,y}\over\norm{x}\norm{y}}\Bigl),
\end{equation}
where $Q_k=Q_{k}^{(d)}$ are the orthogonal polynomials for the
Gegenbauer weight for $\gl={d-2\over2}$, i.e., $(1-x^2)^{d-3\over2}$ on $[-1,1]$, with the 
normalisation $Q_{k}^{(d)}(1)=\dim(\Harm_k(\Rd))$ \cite{DGS77}. 
These satisfy
\begin{align}
Q_\ell^{(d)}(x)
& = (2\ell+d-2) \sum_{j=0}^{[\ell/2]}
(-1)^j {d(d+2)\cdots(d+2\ell-2j-4)\over 2^j j!(\ell-2j)!} x^{\ell-2j} 
\nonumber
\\
&= C_\ell^{({d\over 2})}(x) -C_{\ell-2}^{({d\over 2})}(x) 
= {2\ell+d-2\over d-2}  C_\ell^{({d-2\over2})}(x),
\label{Qkdefn}
\end{align}
where $C_\ell^{(\gl)}$ are the Gegenbauer (ultraspherical) polynomials,
with $C_\ell^{(\gl)}:=0$, $\ell<0$.
They are given by $Q_0(x)=1$, $Q_1(x)=dx$, and the three-term recurrence
$$ \gl_{k+1}Q_{k+1}(x) = xQ_{k}(x)-(1-\gl_{k-1})Q_{k-1}(x), \qquad
\gl_k:={k\over2k+d-2}. $$

Since surface area measure on the sphere is unitarily invariant, 
for $U$ unitary, 
we have
$$ (f\circ U)(x) = \int_\SS K(Ux,y)f(y)\,d\gs(y)
= \int_\SS K(Ux,Uy)(f\circ U)(y)\,d\gs(y). $$
Hence
the reproducing kernel $K(x,y)$ of
a unitarily invariant space $P$ is a function of $\inpro{x,y}$,
and hence the potential is a function of the Gramian $[\inpro{v_j,v_k}]$.
The direct calculation of the formula (\ref{Kadditionform})
is called the {\it addition formula} (see \cite{BHS19}). We also observe, that for the
real sphere
\begin{equation}
\label{realspherenorminpro}
\inpro{x,y}={1\over2}\bigl(\norm{x}^2+\norm{y}^2-\norm{x-y}^2\bigr)
=1-{1\over2}\norm{x-y}^2,
\end{equation}
so that the reproducing kernel on $\SS$ 
can also be thought 
of as a function
of the ``squared distance'' $\norm{x-y}^2$, which 
is the direction generalised by Delsarte spaces (see \cite{H92}).

Since $\Harm_0(\SS)=\spam\{1\}$,
a potential for $P$ of the form (\ref{invariantPform}), 
with weights $w_j$, is given by
\begin{equation}
\label{AwFdef}
A_{w,c}(\Phi) = A_{F,w}(\Phi):= \sum_{j=1}^n \sum_{k=1}^n w_j w_k
F(\inpro{v_j,v_k}),
\end{equation}
where $F$ is the {\it univariate} polynomial given by
\begin{equation}
\label{FCtele}
F(x) 
:= \sum_{\ell\in L\setminus\{0\}} c_\ell Q_\ell(x)
= \sum_{\ell\in L\setminus\{0\}} c_\ell 
\bigl\{ C_\ell^{({d\over 2})}(x)
- C_{\ell-2}^{({d\over 2})}(x) \bigr\}.
\end{equation}
Indeed, every univariate polynomial $F=\sum_k c_k Q_k$ with $c_k\ge0$ gives 
a potential for the unitarily invariant polynomial
space $P=P_L$ of (\ref{invariantPform}), where $L:=\{k>0:c_k>0\}$.
We will say that such a polynomial $F$ is a {\bf potential}
for $P_L$. 
The function $F$ of (\ref{FCtele}) is an example of what Schoenberg \cite{S42} calls
a {\bf positive definite function on the sphere}, i.e., a continuous function 
$F:[-1,1]\to\RR$ for which the 
right-hand side of (\ref{AwFdef})
$$ \sum_{j=1}^n \sum_{k=1}^n w_j w_k
F(\inpro{v_j,v_k}) = w^T [F(\inpro{v_j,v_k})]_{j,k=1}^n w $$
 is nonnegative for all choices of points $(v_j)\subset\SS(\Rd)$ 
and $w=(w_j)\in\RR^n$, $n=1,2,\ldots$. These are characterised by the fact that their Fourier 
series in $(Q_\ell)$ has only nonnegative coefficients \cite{DX13}.
In this terminology, we can paraphrase our observation:

\begin{itemize}
\item A positive definite function $F$ on the real sphere which is a polynomial is 
a potential for a spherical $P$-design, where $P=P_L$ is given by the 
correspondence (\ref{FCtele}).
\end{itemize}

We observe that the value of the potential $A_{F,w}(\Phi)$ depends only on
the set of angles 
and their weighted multiplicities (see Section \ref{boundsection}), i.e.,
$$ \Ang(\Phi)=\{\inpro{v_j,v_k}\}_{1\le j,k\le n}, \qquad
m_{w,\ga}=\sum_{j,k\atop\inpro{v_j,v_k}=\ga}w_jw_k,\quad \ga\in A. $$

\begin{example}
\label{harmonicindextQ}
For spherical designs of harmonic index $m$ \cite{BOT15}, 
$P=\Harm_m(\Rd)$, i.e., $L=\{m\}$, and there is a unique (up to a scalar) potential $Q_m(x)$
given by (\ref{Qkdefn}).
\end{example}

\begin{example}
\label{realtightframes}
(Tight frames)
For $P=\Hom_2(\Rd)$, 
$P|_\SS 
= \Harm_2(\SS)\oplus\spam\{1\}$, i.e., $L=\{0,2\}$, and
$$ Q_2^{(d)}(x)= {1\over 2} d(d+2)\Bigl(x^2-{1\over d}\Bigr). $$
gives the canonical potential. The zeros of this potential are 
the unit norm tight frames, and if the $2$-weights $w_j=w_j^{(2)}$ given
by (\ref{wjdef}) are taken, then one obtains the variational characterisation
of tight frames \cite{W03} (take $m=2$ in (\ref{varcharweightedhalfdesign}) of 
	Theorem \ref{mweightsuffthm}), \cite{BF03}.
\end{example}

For spherical half-designs of order $m$, i.e.,
$$ P|_\SS=\Hom_m(\Rd)|_\SS=\Harm_m(\SS)\oplus\Harm_{m-2}(\SS)\oplus
\Harm_{m-4}(\SS)\oplus\cdots, \quad m\ge1, $$
the sum over $\ell$ for $c_\ell=1$
in (\ref{FCtele})
is telescoping, with $C_0^{({d\over2})}(x)=1$, and we obtain:

\begin{example} 
\label{canpotrealhalf}
The canonical potential for spherical half-designs of order $m$ is given by 
$$ F(x) =
\begin{cases}
C_m^{({d\over 2})}(x), & \hbox{$m$ odd}; \\
C_m^{({d\over 2})}(x) - 1, &\hbox{$m$ even},
\end{cases} $$
i.e.,
\begin{equation}
\label{canonpot}
A_{w,1}(\Phi) =\sum_{j=1}^n \sum_{k=1}^n w_j w_k C_m^{({d\over 2})}(\inpro{v_j,v_k})-\gep,
\qquad
\gep:=\begin{cases}
0 , & \hbox{$m$ odd}; \\
1, &\hbox{$m$ even}.
\end{cases}
\end{equation}
\end{example}
Since the univariate polynomials $C_\ell^{({d\over2})}$ are even or odd,
with monomial coefficients of alternating sign, 
it turns out that we can choose the $c_\ell$ to
obtain a potential for the spherical half-designs with a very simple form. Let
\begin{equation}
\label{bmdefn}
b_m(\Rd) :=
\int_\SS\int_\SS \inpro{x,y}^m d\gs(x)\,d\gs(y)
=
\begin{cases}
0, & \hbox{$m$ odd}; \cr
{1\cdot3\cdot5\cdots(m-1)\over d(d+2)\cdots(d+m-2)}, & \hbox{$m$ even}.
\end{cases}
\end{equation}

\begin{lemma}
\label{halfdesignlemma}
(Half-designs) 
A potential for the spherical designs for $P=\Hom_m(\Rd)$,
equivalently $L=\{m,m-2,\ldots\}$,
is given by the polynomial $F(x)=x^m-b_m(\Rd)$, i.e.,
$$ A_{F,w}(\Phi) = \sum_{j=1}^n \sum_{k=1}^n w_jw_k 
\inpro{v_j,v_k}^m - b_m(\Rd) 
, $$
where $b_m(\Rd)$ is given by {\rm (\ref{bmdefn})}.
\end{lemma}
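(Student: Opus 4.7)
The plan is to invoke the characterisation stated just before the lemma: any polynomial $F(x)=\sum_{k}c_{k}Q_{k}(x)$ with $c_{k}\geq 0$ is automatically a potential for the unitarily invariant space $P_L$ with $L=\{k>0:c_k>0\}$. So the proof reduces to expanding $x^m-b_m(\Rd)$ in the basis $\{Q_\ell\}$ and verifying three things: the $Q_0$-coefficient vanishes, the remaining coefficients are nonnegative, and the support of those positive coefficients is exactly $\{m,m-2,\ldots\}\setminus\{0\}$.

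First I would write $x^m=\sum_{\ell}c_\ell Q_\ell(x)$, where by degree considerations the sum is over $0\leq\ell\leq m$ and by the parity of the $Q_\ell$ only indices with $\ell\equiv m\pmod 2$ occur. To identify $c_0$, I would use orthogonality of the $Q_\ell$ with respect to the normalised Gegenbauer measure $\mu_d$ on $[-1,1]$ (under which $Q_0$ is the constant $1$), combined with the standard identity $\int_\SS\int_\SS f(\inpro{x,y})\,d\gs(x)\,d\gs(y)=\int_{-1}^1 f(t)\,d\mu_d(t)$; applied to $f(t)=t^m$, this gives $c_0=b_m(\Rd)$, so $x^m-b_m(\Rd)=\sum_{\ell>0}c_\ell Q_\ell(x)$ has no constant term.

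The main obstacle is establishing strict positivity of $c_\ell$ for every $\ell$ of the correct parity in $(0,m]$. I would argue by induction on $m$, using the three-term recurrence stated just after (\ref{Qkdefn}) rewritten as $xQ_k=\gl_{k+1}Q_{k+1}+(1-\gl_{k-1})Q_{k-1}$; a direct check from $\gl_k=k/(2k+d-2)$ shows both weights are strictly positive for all relevant $k$ (at $k=0$ the second term drops out since $Q_{-1}=0$, giving the base case $xQ_0=\gl_1 Q_1$). Multiplying the inductive expansion of $x^{m-1}$ by $x$ and collecting terms then yields strict positivity of the coefficients of $x^m$ at every $\ell$ of the correct parity in $[0,m]$, and in particular on $\{m,m-2,\ldots\}\setminus\{0\}$. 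With positivity secured, the correspondence (\ref{FCtele}) gives that $F=x^m-b_m(\Rd)$ is a potential for $P_L$, and the explicit formula for $A_{F,w}(\Phi)$ follows from (\ref{AwFdef}) once one uses $\sum_j w_j=1$ to absorb the constant $b_m(\Rd)$.
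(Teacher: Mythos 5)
Your proof is correct, but it takes a genuinely different route from the paper's. The paper's proof is fully explicit: it writes down closed-form coefficients $c_{m,a}={1\over 2^a a!}\prod_{r=1}^a(d+2(m-r))$ and asserts, via "a simple calculation" with the explicit monomial expansion (\ref{Qkdefn}) of $Q_\ell$, that ${m!\over d(d+2)\cdots(d+2m-2)}\sum_a c_{m,a}Q_{m-2a}(x)=x^m-b_m(\Rd)$; positivity of the coefficients is then immediate from their closed form. You instead argue structurally: the constant term of $x^m$ in the Gegenbauer basis is identified as $b_m(\Rd)$ via orthogonality together with the push-forward identity $\int_\SS\int_\SS f(\inpro{x,y})\,d\gs\,d\gs=\int_{-1}^1 f\,d\mu_d$ (which matches the definition (\ref{bmdefn}) exactly), and strict positivity of the remaining coefficients follows by induction on $m$ from the three-term recurrence $xQ_k=\gl_{k+1}Q_{k+1}+(1-\gl_{k-1})Q_{k-1}$, whose weights are nonnegative. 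Your recursion $c_j^{(m)}=\gl_j c_{j-1}^{(m-1)}+(1-\gl_j)c_{j+1}^{(m-1)}$ does give strict positivity at every index of the correct parity in $[0,m]$ (the first term alone suffices for $j\ge1$), so the support is exactly $L=\{m,m-2,\ldots\}$ as required, and the final absorption of the constant uses $\sum_j w_j=1$ just as in the paper. The trade-off: the paper's computation yields the explicit Gegenbauer coefficients of $x^m$ (potentially useful elsewhere, e.g.\ for the telescoping manipulations in later sections), whereas your argument avoids that computation entirely and establishes exactly what the lemma needs --- nonnegativity and the correct support --- with less bookkeeping. Both are complete proofs.
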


\begin{proof}
The index set is
$L=\{m-2a:1\le m-2a\le m\} = \{m-2a:0\le a\le {m-1\over2}\}$.
Let
$$ c_{m,a} := {1\over2^a a!} \prod_{r=1}^a (d+2(m-r)),
\qquad 0\le a\le {m-1\over2}. $$
Then a simple calculation, using the explicit formula of (\ref{Qkdefn}), gives
$$ F(x):= {m!\over d(d+2)\cdots (d+2m-2)} 
\sum_{0\le a\le {m-1\over2}} c_{m,a} Q_{d,m-2a} (x)
= x^m-b_m(\Rd), $$
so that we have a potential
$$ 
A_{F,w}(\Phi) = \sum_{j=1}^n \sum_{k=1}^n w_jw_k 
\Bigl\{ \inpro{v_j,v_k}^m - b_m(\Rd) \Bigr\}
= \sum_{j=1}^n \sum_{k=1}^n w_jw_k 
\inpro{v_j,v_k}^m - b_m(\Rd), $$
as claimed.
\end{proof}

This result was proved for $m$ even by Venkov \cite{V01} (constant weights) and 
\cite{KP11} (nonnegative weights), by using a different method.

We can now describe various variational conditions for being
a real spherical $t$-design.

\begin{theorem}
\label{tdesignvarthm}
Let $w_1+\cdots+w_n=1$, $w_j\in\RR$, $v_j\in\SS$, and $t\ge1$.
Then $(v_j),(w_j)$ is a weighted
spherical $t$-design if and only if there is
equality in the inequalities
\begin{equation}
\label{weightedt-designcdn}
\sum_{j=1}^n \sum_{k=1}^n w_jw_k \inpro{v_j,v_k}^t \ge b_t(\Rd),
\qquad
\sum_{j=1}^n \sum_{k=1}^n w_jw_k \inpro{v_j,v_k}^{t-1} \ge b_{t-1}(\Rd).
\end{equation}
In particular, $(v_j)$ is a spherical $t$-design if and only if
\begin{equation}
\label{t-designcdn}
{1\over n^2} \sum_{j=1}^n \sum_{k=1}^n \inpro{v_j,v_k}^m 
=\int_\SS\int_\SS \inpro{x,y}^m\,d\gs(x)\,d\gs(y)=b_m(\Rd),
\qquad m\in\{t-1,t\}.
\end{equation}
\end{theorem}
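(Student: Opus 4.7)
The plan is to reduce this to two applications of the half-design Lemma~\ref{halfdesignlemma}. The key observation I would verify first is the decomposition
$$ \core(\Pi_t(\Rd)) = \bigoplus_{j=0}^{t}\Harm_j(\SS)
= \core(\Hom_t(\Rd)) \,+\, \core(\Hom_{t-1}(\Rd)), $$
since the first summand on the right supplies $\Harm_t,\Harm_{t-2},\ldots$ and the second supplies $\Harm_{t-1},\Harm_{t-3},\ldots$, exhausting the degrees $0,1,\ldots,t$. Because a design depends only on the harmonic part (\ref{PLobservation}), it follows that $(v_j),(w_j)$ is a spherical $t$-design if and only if it integrates both $\Hom_t(\Rd)$ and $\Hom_{t-1}(\Rd)$.

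Next I would apply Lemma~\ref{halfdesignlemma} with $m=t$ and $m=t-1$ separately. Since the polynomial $F(x)=x^m-b_m(\Rd)$ is a valid potential for $\Hom_m(\Rd)$-designs, the associated potential
$$ A_{F,w}(\Phi) = \sum_{j,k} w_jw_k \inpro{v_j,v_k}^m - b_m(\Rd) \ge 0 $$
is always nonnegative, with equality precisely when $(v_j),(w_j)$ is a weighted spherical half-design of order $m$. This gives the two inequalities in~(\ref{weightedt-designcdn}) directly, together with the statement that equality in each holds iff the corresponding half-design condition is met. Combining the two, simultaneous equality is equivalent to being a spherical $t$-design, proving the main equivalence.

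For the unweighted specialization~(\ref{t-designcdn}), I would simply substitute $w_j=1/n$ into the weighted statement, which yields $w_jw_k=1/n^2$ as the common factor, and recall that by definition $b_m(\Rd)=\int_\SS\int_\SS\inpro{x,y}^m\,d\gs(x)\,d\gs(y)$, giving the claimed identity for $m\in\{t-1,t\}$.

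I do not anticipate a genuine obstacle: the content of the theorem is essentially the bookkeeping that $\core(\Pi_t)$ splits as the sum of two consecutive half-design harmonic parts, together with the fact that Lemma~\ref{halfdesignlemma} has already packaged the single-degree variational inequality into the clean monomial-minus-constant form. The only small subtlety to mention is that the two one-sided inequalities $\ge b_t(\Rd)$ and $\ge b_{t-1}(\Rd)$ arise not from a single potential but from two separate (nonnegative) potentials, so the $t$-design characterization genuinely requires equality in \emph{both}, not just their sum.
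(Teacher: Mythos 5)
Your proposal is correct and follows essentially the same route as the paper: the paper's proof likewise notes that $\Pi_t(\Rd)|_\SS=\Hom_t(\Rd)|_\SS\oplus\Hom_{t-1}(\Rd)|_\SS$, so that a spherical $t$-design is exactly a half-design of order $t$ that is also a half-design of order $t-1$, and then invokes Lemma~\ref{halfdesignlemma} for $m=t$ and $m=t-1$. Your closing remark that equality is required in both inequalities separately (not merely in their sum) is a correct and worthwhile clarification of the same argument.
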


\begin{proof}
Since $\Pi_t(\Rd)|_\SS=\Hom_t(\Rd)|_\SS\oplus\Hom_{t-1}(\Rd)|_\SS$, the spherical $t$-designs
are precisely the spherical half-designs of order $t$ which are also half-designs of order $t-1$,
and the result
follows directly from Lemma \ref{halfdesignlemma}.
\end{proof}

The condition (\ref{t-designcdn}) for $m\in\{1,2,\ldots,t\}$ is well known
(see \cite{GS79}, Theorem 4.4).
The following 
condition given by the canonical potential (\ref{canonpot}) appears to be new.

\begin{corollary}  
\label{canonicalpotrealtdesigns}
The unit vectors $(v_j)\subset\Rd$ give a weighted
spherical $t$-design if and only if
\begin{equation}
\label{newtdesigncdn}
\sum_{j=1}^n\sum_{k=1}^n w_jw_k\, C_m^{({d\over2})}(\inpro{v_j,v_k})
=\begin{cases}
0, & \hbox{$m$ odd}; \cr
1, & \hbox{$m$ even},
\end{cases}
\qquad m\in\{t-1,t\},
\end{equation}
which can also be written as
\begin{equation}
\label{nicetdesigncdn}
\sum_{j=1}^n\sum_{k=1}^n w_jw_k\,\bigl\{
 C_t^{({d\over2})}(\inpro{v_j,v_k})+ C_{t-1}^{({d\over2})}(\inpro{v_j,v_k})
\bigr\}=1.
\end{equation}
\end{corollary}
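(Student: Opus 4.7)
The plan is to derive both characterisations directly from the canonical half-design potential of Example \ref{canpotrealhalf}, combined with the orthogonal decomposition $\Pi_t(\Rd)|_\SS = \Hom_t(\Rd)|_\SS \oplus \Hom_{t-1}(\Rd)|_\SS$ already exploited in the proof of Theorem \ref{tdesignvarthm}.

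First I would invoke Example \ref{canpotrealhalf}, which gives the canonical potential for $\Hom_m(\Rd)$ in the form
$$\sum_{j,k} w_jw_k\,C_m^{({d\over 2})}(\inpro{v_j,v_k}) - \gep_m \ge 0,\qquad \gep_m:=\begin{cases}0,& m\text{ odd,}\\ 1,& m\text{ even,}\end{cases}$$
with equality if and only if $(v_j),(w_j)$ is a weighted half-design of order $m$. Applying this for $m=t$ and $m=t-1$, and combining with the fact—already used in the proof of Theorem \ref{tdesignvarthm}—that a $t$-design is precisely a simultaneous half-design of orders $t$ and $t-1$, the characterisation (\ref{newtdesigncdn}) is immediate.

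For (\ref{nicetdesigncdn}), I would sum the two inequalities above for $m=t$ and $m=t-1$. Since $t$ and $t-1$ have opposite parities, $\gep_t+\gep_{t-1}=1$, and hence
$$\sum_{j,k} w_jw_k\bigl\{C_t^{({d\over 2})}(\inpro{v_j,v_k})+C_{t-1}^{({d\over 2})}(\inpro{v_j,v_k})\bigr\}\ge 1.$$
If $(v_j),(w_j)$ is a $t$-design then both individual summand inequalities are equalities and the sum equals $1$. Conversely, since each shifted quantity $\sum w_jw_k C_m^{({d\over 2})}(\inpro{v_j,v_k})-\gep_m$ is individually nonnegative, equality in the combined sum forces equality in each, which by the previous step delivers the $t$-design property.

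There is no substantive obstacle here: the content is essentially a parity bookkeeping over Example \ref{canpotrealhalf}. The only point needing attention is the converse direction of (\ref{nicetdesigncdn}), which rests on the trivial observation that two nonnegative quantities summing to the sum of their lower bounds must each attain its lower bound.
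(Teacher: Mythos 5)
Your proposal is correct and is essentially the argument the paper intends: the corollary is presented as an immediate consequence of the canonical potential of Example \ref{canpotrealhalf} together with the decomposition $\Pi_t(\Rd)|_\SS=\Hom_t(\Rd)|_\SS\oplus\Hom_{t-1}(\Rd)|_\SS$ used in Theorem \ref{tdesignvarthm}, and your parity bookkeeping (including the observation that equality in the combined sum forces equality in each nonnegative summand) fills in exactly the details the paper leaves implicit.
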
  

The condition that spherical $t$-designs are characterised by 
their canonical potentials for $\Harm_\ell(\Rd)$ vanishing, i.e.,
$$ \sum_{j=1}^n\sum_{k=1}^n w_jw_k\, C_\ell^{({d-2\over2})}(\inpro{v_j,v_k})=0, 
\qquad \ell\in\{1,2,\ldots,t\},$$
is well known, as is the generalisation to spherical designs of harmonic index $t$
\cite{ZBBJY17} (Lemma 2.1).

We now show that the necessary conditions (\ref{mweights1}) and (\ref{mweights2}),
which are given in Sidel'nikov \cite{Si74b} (Corollary 1) for unit vectors,
are sufficient.

\begin{theorem}
\label{mweightsuffthm}
Let $(v_j)$ be vectors in $\Rd$, not all zero, with corresponding $m$-weights, i.e.,
\begin{equation}
\label{wjdefn}
w_j:={\norm{v_j}^m\over\sum_\ell \norm{v_\ell}^m}. 
\end{equation}
Then $(v_j)$ gives a weighted spherical half-design of order $m$ 
if and only if there is equality in the inequality
\begin{equation}
\label{varcharweightedhalfdesign}
\sum_{j=1}^n \sum_{k=1}^n \inpro{v_j,v_k}^m 
\ge b_m(\Rd) \Bigl(\sum_{\ell=1}^n\norm{v_\ell}^m\Bigr)^2.
\end{equation}
Moreover, this is also a weighted spherical $m$-design if and only if
in addition there is equality in 
\begin{equation}
\label{varcharweightedtdesignpartII}
\sum_{j=1}^n \sum_{k=1}^n \norm{v_j}\norm{v_k} \inpro{v_j,v_k}^{m -1}
\ge b_{m-1}(\Rd) \Bigl(\sum_{\ell=1}^n\norm{v_\ell}^m\Bigr)^2.
\end{equation}
\end{theorem}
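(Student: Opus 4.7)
The plan is to reduce this off-sphere statement to the on-sphere case of Lemma \ref{halfdesignlemma}, using the fact that the $m$-weight normalisation produces a particularly clean cancellation. Since the hypothesis is that not all $v_j$ vanish, I can discard any zero vectors (they contribute nothing to either side). Set $\hat v_j := v_j/\norm{v_j}$ and $S := \sum_\ell \norm{v_\ell}^m$, so that $w_j = \norm{v_j}^m/S$ are the $m$-weights from (\ref{wjdefn}) and $\sum_j w_j = 1$. By definition, $(v_j)$ with $m$-weights $(w_j)$ is a weighted spherical half-design of order $m$ if and only if $(\hat v_j),(w_j)$ is such a design in the unit-vector sense of Lemma \ref{halfdesignlemma}.

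For the first equivalence, I would apply Lemma \ref{halfdesignlemma} with the potential $F(x) = x^m - b_m(\Rd)$ to the sequence $(\hat v_j)$ on $\SS$ with weights $(w_j)$, to obtain
$$
\sum_{j=1}^n \sum_{k=1}^n w_j w_k\, \inpro{\hat v_j,\hat v_k}^m - b_m(\Rd) \ge 0,
$$
with equality if and only if $(\hat v_j),(w_j)$ is a half-design of order $m$. The key algebraic identity is the complete cancellation
$$
w_j w_k\, \inpro{\hat v_j,\hat v_k}^m
 = {\norm{v_j}^m\norm{v_k}^m\over S^2} \cdot {\inpro{v_j,v_k}^m\over \norm{v_j}^m\norm{v_k}^m}
 = {\inpro{v_j,v_k}^m\over S^2},
$$
which is exactly what the $m$-weight definition is designed to yield. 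Multiplying the inequality by $S^2$ converts it into (\ref{varcharweightedhalfdesign}), with equality characterising the half-design condition.

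For the second equivalence, I would use the decomposition $\core(\Pi_m(\Rd)) = \core(\Hom_m(\Rd)) \oplus \core(\Hom_{m-1}(\Rd))$, so that a weighted spherical $m$-design is precisely a simultaneous half-design of orders $m$ and $m-1$. Applying Lemma \ref{halfdesignlemma} once more, this time with $m$ replaced by $m-1$ and potential $F(x) = x^{m-1} - b_{m-1}(\Rd)$, to the same normalised vectors $(\hat v_j)$ with the same $m$-weights $(w_j)$, yields
$$
\sum_{j=1}^n \sum_{k=1}^n w_j w_k\, \inpro{\hat v_j,\hat v_k}^{m-1} \ge b_{m-1}(\Rd),
$$
with equality if and only if $(\hat v_j),(w_j)$ is also a half-design of order $m-1$. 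Now the cancellation is only partial:
$$
w_j w_k\, \inpro{\hat v_j,\hat v_k}^{m-1}
 = {\norm{v_j}^m\norm{v_k}^m\over S^2} \cdot {\inpro{v_j,v_k}^{m-1}\over \norm{v_j}^{m-1}\norm{v_k}^{m-1}}
 = {\norm{v_j}\norm{v_k}\,\inpro{v_j,v_k}^{m-1}\over S^2},
$$
leaving a residual factor of $\norm{v_j}\norm{v_k}$. Multiplying through by $S^2$ produces (\ref{varcharweightedtdesignpartII}), and combining this equality with equality in (\ref{varcharweightedhalfdesign}) gives the full $m$-design conclusion.

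The calculation is essentially mechanical once the reduction to Lemma \ref{halfdesignlemma} is made; the one point requiring care is that a single weight sequence $(w_j)$, namely the $m$-weights, is used throughout, and that the two different exponents on $\norm{v_j}$ produce precisely the asymmetry seen between (\ref{varcharweightedhalfdesign}) and (\ref{varcharweightedtdesignpartII}). In particular, it is because we insist on the $m$-weights (not the $(m-1)$-weights) for both inequalities that the second identity carries the extra factor $\norm{v_j}\norm{v_k}$ that would be absent in the unit-norm setting of Theorem \ref{tdesignvarthm}.
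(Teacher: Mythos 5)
Your proposal is correct and follows essentially the same route as the paper, whose entire proof is to substitute the $m$-weights into Lemma \ref{halfdesignlemma} and Theorem \ref{tdesignvarthm}; you have simply written out the normalisation $\hat v_j=v_j/\norm{v_j}$, the cancellation identities, and the decomposition $\Pi_m(\Rd)|_\SS=\Hom_m(\Rd)|_\SS\oplus\Hom_{m-1}(\Rd)|_\SS$ that the paper leaves implicit. The explicit handling of zero vectors and the remark on why the $(m-1)$-case retains the factor $\norm{v_j}\norm{v_k}$ are accurate refinements of the same argument.
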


\begin{proof}
Take $w_j$ given by (\ref{mweightsuffthm}) in Lemma \ref{halfdesignlemma}
and Theorem \ref{tdesignvarthm}.
\end{proof}

%

To find spherical $t$-designs with nonnegative weights, 
one can minimise the single potential
$$ A(\Phi):=\sum_{j=1}^n \sum_{k=1}^n \bigl( 
\inpro{v_j,v_k}^t 
+\norm{v_j}\norm{v_k}\inpro{v_j,v_k}^{t-1} \bigr)
\ge c_t(\Rd)+c_{t-1}(\Rd), $$
over the compact set of $\Phi=(v_j)$ with $\sum_{\ell}\norm{v_\ell}^m=1$,
where equality gives a weighted spherical $t$-design for the $t$-weights.

If $X=\{v_j\}$ is antipodal (centrally symmetric), i.e., 
$X=-X$, then it gives a spherical half-design of odd order $m$,
for every $m$.
We will say a spherical half-design of odd order is {\bf nontrivial}
if its vectors span $\Rd$ and it is not antipodal.

\begin{example} The $d+1$ vertices of a regular simplex in $\Rd$ are a
nontrivial example of a spherical half-design of order $m=1$, via direct
calculation of (\ref{varcharweightedhalfdesign})
$$ \sum_j \sum_k \inpro{v_j,v_k}^1
= (d+1)+\{(d+1)^2-(d+1)\}\bigl({-1\over d}\bigr) = 0. $$
Hardin and Sloane \cite{HS96} give various half-designs for $\RR^3$,
e.g., they give $11$, $13$ and $15$-point spherical $3$-designs which
are nontrivial half-designs of order $3$.
\end{example}

It follows from the definition of designs (\ref{cuberule}), that a weighted 
spherical half-design of order $m$ is a weighted spherical-half design of
orders $m-2,m-4,\ldots$, with the same weights. 
Expressing this observation in terms of (\ref{varcharweightedhalfdesign}) gives:

\begin{example} Let $(v_j)$ be a sequence in $\Rd$ and $m$ be an odd
positive integer. If 
$$ \sum_{j=1}^n\sum_{k=1}^n \inpro{v_j,v_k}^m =0, $$
then 
\vskip-1truecm
$$ \sum_{j=1}^n\sum_{k=1}^n \norm{v_j}^{m-\ell}\norm{v_k}^{m-\ell}
 \inpro{v_j,v_k}^\ell =0, \qquad \ell=1,3,5,\ldots,m. $$
\end{example}

\begin{example} (Sharp configurations)
In \cite{CK07}, an $f$-potential energy for a finite set points $\cC$
on the real sphere by 
	$$ \sum_{x,y\in\cC\atop x\ne y} f(\norm{x-y}^2), $$
where $f:=(0,4]\to[0,\infty)$ is any decreasing continuous function.
In view of (\ref{realspherenorminpro}), this can be written as
$$ \sum_{x,y\in\cC\atop x\ne y} F(\inpro{x,y}), \qquad 
F(t):=f(2(1-t)), \quad -1\le t<1. $$
A subset $\cC$ of $\SS$ is a {\bf sharp configuration} (or {\bf code}) if there
are $m$ inner products between distinct points and it is a spherical
	$(2m-1)$-design. It is shown by Cohn and Kumar \cite{CK07} (also see
	\cite{BHS19} \S5.7) that if $\cC$ is a sharp configuration or
the vertices of the $600$-cell, and the above $f$ is 
completely monotonic, i.e., $(-1)^k f^{(k)}(t)\ge 0$, $k\ge0$, 
equivalently, $F$ is absolutely monotonic, i.e., $F^{(k)}(t)\ge 0$, $k\ge0$,
then 
$$ \sum_{x,y\in\cC'\atop x\ne y} f(\norm{x-y}^2)
\ge \sum_{x,y\in\cC\atop x\ne y} f(\norm{x-y}^2), $$
for any other set of points with $|\cC'|=|\cC|$, i.e., 
	$\cC$ is a uniformly optimal distribution of points on the sphere
	(for all such $f$).
\end{example}

\section{Complex spherical designs}
\label{complexdesignsect}

The complex unitary matrices for $\Cd$ are a subgroup of the unitary matrices for $\RR^{2d}$
(the orthogonal group), and so irreducible subspaces under the action of the orthogonal group
may not be irreducible under the action of the complex unitary group.

For the complex sphere $\SS$, 
the harmonic functions $\Harm_k(\Cd)\approx\Harm_k(\RR^{2d})$ can be further decomposed
into orthogonal (complex) unitarily invariant irreducible subspaces
$$ \Harm_k(\Cd) = H(k,0)\oplus H(k-1,1)\oplus\cdots\oplus H(0,k), $$
where $H(p,q)$ consists of all harmonic homogeneous polynomials on $\Cd$
that have degree $p$ in the variables $z_1,\ldots,z_d$ and
degree $q$ in the variables $\overline{z_1},\ldots,\overline{z_d}$
(see \cite{R80}).
Thus the unitarily invariant subspaces of polynomials restricted 
to the complex sphere have the form
$$ P|_\SS = P_\tau:= \bigoplus_{(p,q)\in\tau} H(p,q), \qquad
\hbox{(orthogonal direct sum)} $$
for $\tau$ a finite subset of indices from $\{(j,k):j,k\ge0\}$. 
Thus, the most general complex spherical design is one which 
integrates $P_\tau$, which we call a ({\bf spherical}) {\bf $\tau$-design}. 
Aspects of these $\tau$-designs
have been studied by \cite{MOP11}, \cite{RS14} 
and \cite{MW23}.

There is a subtlety in defining classes of complex spherical designs, as the $\tau$ defining
a class is not unique (as is the $L$ in the real case), as we now see.

The reproducing kernel $K_{pq}=K_{pq}^{(d)}$ for $H(p,q)$ (and hence for any unitarily 
invariant space of polynomials)
has been calculated explicitly by \cite{F75} for $d>1$ as 
a function of $\inpro{z,w}$
$$K_d^{(p,q)}(z,w) = Q_{pq}^{(d)}(\inpro{z,w}), \qquad z,w\in\Cd, $$
where $Q_{pq}=Q_{pq}^{(d)}$
is the univariate polynomial
\begin{align}
Q_{pq}^{(d)}(z) &:= c_{pq}^{(d)} \,
z^{p-\min\{p,q\}}\overline{z}^{q-\min\{p,q\}}
{P^{(d-2,|p-q|)}_{\min\{p,q\}} (2|z|^2-1) \over 
P^{(d-2,|p-q|)}_{\min\{p,q\}} (1)} 
\nonumber \\
& = {p+q+d-1\over (d-1)!} \sum_{j=0}^{\min\{p,q\}} (-1)^j {(d+p+q-j-2)!\over j!(p-j)!(q-j)!}
z^{p-j}\overline{z}^{q-j},\quad z\in\CC, 
\label{Qpqdef}
\end{align}
and
\begin{equation}
\label{Hpqdimension}
c_{pq}^{(d)} := \dim(H(p,q))
= {(p+q+d-1)(p+d-2)!(q+d-2)!\over p!q!(d-1)!(d-2)!}
= Q_{pq}^{(d)}(1).
\end{equation}
The second of these formulas also holds for $d=1$, where
$H(p,q)=0$, unless $p=0$ or $q=0$, in which
case $H(p,0)=\spam\{z^p\}$, $H(0,q)=\spam\{\overline{z}^q\}$.
We also have 
$$ \Hom(p,q)=\Hom_{p,q}(\Cd)|_\SS = H(p,q)\oplus H(p-1,q-1)\oplus\cdots. $$
We observe that the expansion for $Q_{pq}(z)$ in terms of the 
monomials $z^j\overline{z}^k$ has real coefficients, so that
$$\overline{Q_{pq}(z)} = Q_{pq}(\overline{z}) = Q_{qp}(z), $$ 
and so the canonical potentials for $H(p,q)$ and $H(q,p)$ are equal,
by the calculation
\begin{align} 
\label{Hpqpots}
A_{w,H(p,q)}(\Phi) &= \overline{A_{w,H(p,q)}(\Phi)}
= \sum_j\sum_k w_j w_k \overline{Q_{pq}(\inpro{v_j,v_k})}  \cr
&= \sum_j\sum_k w_j w_k Q_{qp}^{(d)}(\inpro{v_j,v_k}) 
= A_{w,H(q,p)}(\Phi), 
\end{align}
as are those for $\Hom(p,q)$ and $\Hom(q,p)$.
Since the canonical potentials for $H(p,q)$ and $H(q,p)$ are equal,
the class of spherical designs for some unitarily invariant polynomial spaces 
are equal.
For a set of indices $\tau$, we define 
$$ \tau^{\rm rev} := \{(q,p): (p,q)\in\tau\}. $$
By the {\bf class} of a spherical $\tau$-design we mean the maximal unitarily invariant
subspace that it integrates, or the indices $\tau^*$ of that subspace.


\begin{proposition} The class of complex spherical designs for 
$P_L$ and $P_K$ are equal if and only if
$$ L\cup L^\rev\cup\{0\} = K\cup K^\rev \cup \{0\}. $$
In particular, the class (of indices) for any $\tau$-design is
$$ \tau^*=\tau\cup\tau^\rev\cup\{0\}. $$
\end{proposition}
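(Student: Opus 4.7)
My plan is to reduce the proposition to a statement about which irreducible components $H(p,q)$ must be integrated, and then leverage the symmetry of canonical potentials established in (\ref{Hpqpots}).

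Since $P_L=\bigoplus_{(p,q)\in L} H(p,q)$ is an orthogonal sum of irreducible unitarily invariant subspaces and integration is linear, $(v_j),(w_j)$ is a $P_L$-design if and only if it integrates each $H(p,q)$ with $(p,q)\in L$. The normalization $w_1+\cdots+w_n=1$ automatically forces integration of $H(0,0)=\spam\{1\}$, and (\ref{Hpqpots}) shows that the canonical potential for $H(p,q)$ equals that for $H(q,p)$, so integrating one is equivalent to integrating the other. Combining these, the collection of $P_L$-designs coincides with the collection of $P_{L\cup L^\rev\cup\{0\}}$-designs. This gives the ``if'' direction of the proposition at once, and it also shows that every $\tau$-design integrates $P_{\tau^*}$ where $\tau^*=\tau\cup\tau^\rev\cup\{0\}$.

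For the converse (and the maximality asserted in the ``in particular'' clause), I would show that whenever $(p',q')\notin L\cup L^\rev\cup\{0\}$ there exists, for $n$ large enough, a $P_L$-design that fails to integrate $H(p',q')$. Granting such an existence statement, any discrepancy between $L\cup L^\rev\cup\{0\}$ and $K\cup K^\rev\cup\{0\}$ at once produces a design in one collection but not the other. The main obstacle is precisely this independence claim: ruling out the possibility that the geometry of $\SS\subset\Cd$ conspires to make every $P_L$-design automatically integrate some extra $H(p',q')$. I expect to handle it via a dimension-count or perturbation argument — for $n$ large the variety of $P_L$-designs strictly contains the variety of $P_{L\cup\{(p',q')\}}$-designs, since an orthonormal basis of $H(p',q')$ yields genuinely new integration functionals on point configurations, so perturbing a known $P_L$-design along those directions breaks the extra constraints while preserving those indexed by $L\cup L^\rev\cup\{0\}$.
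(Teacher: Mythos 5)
Your proposal follows essentially the same route as the paper: the forward direction rests on exactly the two observations the paper uses (the normalisation $w_1+\cdots+w_n=1$ forces integration of $H(0,0)$, and (\ref{Hpqpots}) makes integrating $H(p,q)$ equivalent to integrating $H(q,p)$), and your converse — exhibiting a $P_L$-design that fails to integrate an extra $H(p',q')$ — is the same independence claim the paper disposes of with its equally unelaborated ``linear algebra argument.'' Both treatments leave that existence step at the level of a sketch, so your write-up is at the paper's own level of rigour and no more.
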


\begin{proof} Since every complex spherical design integrates the constants,
we can add $0=(0,0)$ to the set of indices $L$ without changing the class of
the spherical designs it gives. Similarly,
since $H(p,q)$ and $H(q,p)$ have the same canonical potential, by (\ref{Hpqpots}), 
we may add $(q,p)$ for $(p,q)\in L$. Thus if $L\cup L^\rev\cup\{0\} \subset K\cup K^\rev \cup \{0\}$, then a spherical design for $P_K$ is spherical design for $P_L$.
This gives the forward implication.

The converse follows from the fact that the canonical potentials for different classes
differ by at least one term, and then a linear algebra argument.
\end{proof}

We will not labour the point, but the classes of complex spherical designs are 
given by the possible choices for $\tau^*$, and for a $\tau$-design we will refer
to $\tau^*$ as the {\bf canonical indices} for the design, and use 
$\tau^*\setminus\{(0,0)\}$ 
to calculate the {\bf canonical potential}.



We will carry over terminology from the real case,
e.g., we say that a univariate polynomial
\begin{equation}
\label{complexF}
F=\sum_{(p,q)} f_{pq} Q_{pq}, \qquad f_{pq}\ge0,
\end{equation}
gives a potential $A_{F,w}(\Phi)$ for the unitarily invariant polynomial space
$$ P=P_\tau=\bigoplus_{(p,q)\in\tau} H(p,q), 
\qquad \tau:=\{(p,q): f_{pq}>0, (p,q)\ne(0,0)\}. $$

\begin{example} (Balanced sets) 
\label{Balancedsetexample}
Let $P_\tau=H(1,0)$, i.e., $\tau=\{(1,0)\}$.   
Then 
	$$ \tau^*=\{(0,0),(1,0),(0,1)\}, \qquad Q_{10}^{(d)}(z)=dz, $$
so that the $\tau$-designs $(v_j)$ are characterised by 
$$ \sum_j\sum_k \inpro{v_j,v_k}
= \inpro{\sum_jv_j,\sum_k v_k}
= \Vert\sum_jv_j\Vert^2
= 0 \Implies \sum_jv_j=0, $$
i.e., the sum of their vectors is zero, and they are said to be
{\bf balanced}.
\end{example}

We have the complex version of Example \ref{realtightframes}.

\begin{example}
\label{complextightframeex}
 (Complex tight frames) 
For $P=\Hom_{1,1}(\Cd)$, 
$P|_\SS=H(1,1)\oplus H(0,0)$,  
i.e., $\tau=\tau^*=\{(1,1),(0,0)\}$, and
$$ Q_{11}^{(d)}(z) = d(d+1)\Bigl(|z|^2-{1\over d}\Bigr), $$
gives the canonical potential (which is zero for $d=1$) 
$$ A_{P,w}(\Phi) = d(d+1)\sum_j\sum_k w_j w_k \Bigl(|\inpro{v_j,v_k}|^2-{1\over d}\Bigr). $$
The zeros of this potential are the unit norm tight frames for $\Cd$.
For vectors $(v_j)$ in $\Cd$, by taking the $2$-weights 
given by (\ref{wjdef}),
one obtains the variational characterisation of tight frames \cite{W03}.
\end{example}

The functions $F:\DD\to\CC$, on the complex unit disc $\DD=\{z\in\CC:|z|\le1\}$, 
of (\ref{complexF}) that give a potential, are in general complex-valued.
They do satisfy 
$$\overline{F(z)}=F(\overline{z}), $$
and so the potential $A_{F,w}$ 
that $F$ gives is real-valued, since we may group terms in (\ref{AwFdef})
$$ w_jw_k\bigl( F(\inpro{v_j,v_k}) + F(\inpro{v_k,v_j})\bigr)
= w_jw_k\bigl( F(\inpro{v_j,v_k}) + \overline{F(\inpro{v_j,v_k})}\bigr)
= 2 w_jw_k\Re\bigl(F(\inpro{v_j,v_k})\bigr). $$ 
It is possible to develop a theory of {\bf positive definite functions
on the complex sphere} which include such polynomials $F$ 
(see \cite{MP01}, \cite{MPP17}).

The canonical potential 
for 
$$ P=\Hom_2(\Cd), \qquad
P|_\SS = H(2,0)\oplus H(1,1)\oplus H(0,2)\oplus H(0,0), $$
is given by
$$ F(z)= Q_{20}^{(d)}(z)+Q_{11}^{(d)}(z)+Q_{20}^{(d)}(z)
= {1\over2} d(d+1)\Bigl( z^2 +2 z\overline{z}+\overline{z}^2
-{2\over d}\Bigr)
= 2d (d+1)\Bigl( \Re(z)^2 -{1\over 2d}\Bigr) . $$
Since this potential is zero if and only if $(v_j)$ is 
tight frame for $\RR^{2d}$, we have the following.

\begin{proposition}
Let $(v_j)$ be a sequence of $n$ vectors in $\Cd$, not all zero.
Then $(v_j)$ is tight frame for $\Cd$ if and only if there is
equality in the inequality
$$ \sum_j\sum_k|\inpro{v_j,v_k}|^2
\ge {1\over d}\Bigl(\sum_\ell\norm{v_\ell}^2 \Bigr)^2. $$
Moreover, it is also a tight frame for $\R^{2d}$ if and only if, 
in addition, there is equality in the inequality
$$ \sum_j\sum_k \inpro{v_j,v_k}^2 \ge 0. $$
\end{proposition}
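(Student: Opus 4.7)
The plan is to derive both inequalities as specialisations of the canonical potentials of Theorem~\ref{varcharthm} for the subspaces $H(1,1)$ and $H(2,0)\oplus H(0,2)$, evaluated with the $2$-weights $w_j=\norm{v_j}^2/\sum_\ell\norm{v_\ell}^2$, so that the proof becomes the complex analogue of Theorem~\ref{mweightsuffthm} for $m=2$.

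For the first inequality I would start from Example~\ref{complextightframeex}: the canonical potential for $H(1,1)$ is $F(z)=d(d+1)(|z|^2-1/d)$. Inserting the $2$-weights on the unit vectors $v_j/\norm{v_j}$ and cancelling the norm factors that appear in $|\inpro{\cdot,\cdot}|^2$ produces, after clearing the positive constant $d(d+1)/(\sum_\ell\norm{v_\ell}^2)^2$, exactly the stated inequality, with equality if and only if $(v_j)$ is a tight frame for $\Cd$ (by Example~\ref{complextightframeex}). For the second inequality I would use the computation displayed immediately before the statement, which records the decomposition $\Hom_2(\Cd)|_\SS = H(2,0)\oplus H(1,1)\oplus H(0,2)\oplus\spam\{1\}$ together with $Q_{20}(z)+Q_{02}(z)=\tfrac12 d(d+1)(z^2+\overline{z}^2)=d(d+1)\Re(z^2)$. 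The same $2$-weight substitution reduces the $H(2,0)\oplus H(0,2)$ potential to a positive multiple of $\Re\sum_{j,k}\inpro{v_j,v_k}^2$; since swapping $j\leftrightarrow k$ replaces $\inpro{v_j,v_k}^2$ by its complex conjugate, the double sum is automatically real, so the real part can be dropped, yielding the second inequality.

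To tie the two together I would appeal to the observation stated just before the proposition, namely that tightness in $\RR^{2d}$ is equivalent to the vanishing of the canonical potential for $\Hom_2(\Cd)$. Because that potential is the sum of the two nonnegative potentials above (the $H(1,1)$ potential and the $H(2,0)\oplus H(0,2)$ potential), it vanishes precisely when both do, and this gives the ``if and only if'' of the moreover clause. The step requiring the most care, and the only place I see any obstacle at all, is the symmetry argument showing $\sum_{j,k}\inpro{v_j,v_k}^2\in\RR$; everything else is a direct unpacking of potentials already catalogued in the preceding pages.
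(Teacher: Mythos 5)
Your proposal is correct and follows essentially the same route as the paper: the first inequality is exactly the $H(1,1)$ potential with $2$-weights from Example \ref{complextightframeex}, and the second comes from requiring the $H(2,0)$ (equivalently $H(0,2)$) potential to vanish as well, with the $2$-weight substitution cancelling the norms to leave $\sum_{j,k}\inpro{v_j,v_k}^2\ge0$. The only cosmetic difference is that you carry $H(2,0)\oplus H(0,2)$ together and invoke the $j\leftrightarrow k$ symmetry to drop the real part, whereas the paper works with $A_{H(2,0)}$ alone and relies on the already-established identity $A_{w,H(p,q)}=A_{w,H(q,p)}$ (equation (\ref{Hpqpots})) for the same realness fact.
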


\begin{proof} 
The first statement is just the variational characterisation of tight frames for 
$\Cd$ given in Example \ref{complextightframeex}.
For the vectors $(v_j)$ in $\Cd$ to be a tight frame for $\RR^{2d}$ the potentials
$A_{H(1,1)}(\Phi)$ and $A_{H(2,0)}(\Phi)=A_{H(0,2)}(\Phi)$ 
must be minimised for the $2$-weights. The first is minimised by being
a tight frame for $\Cd$, and the second if and only if there is equality
in 
$$ A_{H(2,0)}(\Phi)={1\over2}d(d+1) \sum_j \sum_k {\norm{v_j}^2\over C}{\norm{v_k}^2\over C} 
\inpro{{v_j\over\norm{v_j}},{v_k\over\norm{v_k}}}^2
={d(d+1)\over 2C^2} \sum_j \sum_k \inpro{v_j,v_k}^2\ge0, $$
where $C=\sum_\ell \norm{v_\ell}^2>0$.
\end{proof}

This result appears in \cite{W25} (Theorem 3.1) where it is 
obtained in a similar way.

\begin{example} For $v_j=z_j=x_j+iy_j\in\CC$, not all zero, $(v_j)$ is
a tight frame for $\CC$ (since the first inequality holds). 
The condition for the vectors $(x_j,y_j)$ to be a tight frame for $\RR^2$
is that
$$ \sum_j\sum_k (z_j\overline{z_k})^2=
\Bigl(\sum_j z_j^2\Bigr)\Bigl(\sum_k \overline{z_k}^2\Bigr)
={\Bigl\lvert}\sum_j z_j^2{\Bigr\rvert}^2=0, $$
i.e., $\sum_j z_j^2=0$. This characterisation of tight frames for $\RR^2$
(in polar form) is given in \cite{Fic01} 
and \cite{HKLW07} (where the vectors $z_j^2$ are called diagram vectors).
\end{example}

The potentials for $P=\Hom(2,2)=H(2,2)\oplus H(1,1)\oplus H(0,0)$ 
are given by
\begin{align*}
F(z) &= c_1 Q_{11}^{(d)}(z)+c_2 Q_{22}^{(d)}(z) \cr
&= c_1 (d+1)(d|z|^2-1)+c_2
{1\over 4} d(d+3)\bigl( (d^2+3d+2)|z|^4-4(d+1)|z|^2+2\bigr).
\end{align*}
The $F(z)$ for the canonical potential ($c_1=c_2=1$) has nonzero terms in $1$, $|z|^2$, $|z|^4$.
The term in $|z|^2$ can be cancelled by choosing
$c_1=(d+3)c_2$, which gives
$$ F(z) = |z|^4-{2\over d(d+1)}. $$
We now seek a similar `telescoping' sum for a general 
$\Hom(p,q)$, to obtain an analogue of Lemma \ref{halfdesignlemma}.

\begin{lemma} A direct calculation gives
\begin{equation}
\label{Hpqtele}
\sum_{j=0}^{\min\{p,q\}} {(d-1)!p!q!\over j!(p+q+d-1-j)!} Q_{p-j,q-j}^{(d)}(z)
= z^p\overline{z}^q.
\end{equation}
\end{lemma}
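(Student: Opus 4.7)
The plan is to substitute the explicit formula (\ref{Qpqdef}) for $Q_{p-j,q-j}^{(d)}$ into the left-hand side of (\ref{Hpqtele}) and match coefficients of each monomial $z^{p-m}\overline{z}^{q-m}$ for $0\le m\le\min\{p,q\}$. After re-indexing the double sum via $m=j+k$, where $k$ is the expansion index inside each $Q_{p-j,q-j}^{(d)}$, the identity reduces to showing that the coefficient of $z^{p-m}\overline{z}^{q-m}$ equals $\delta_{m,0}$.

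Setting $N:=p+q+d-1$, that coefficient takes the form
\[
\frac{p!\,q!}{(p-m)!(q-m)!}\,S_m,\qquad
S_m := \sum_{j=0}^m \frac{(-1)^{m-j}(N-2j)(N-j-m-1)!}{j!\,(N-j)!\,(m-j)!}.
\]
For $m=0$ the single surviving term evaluates to $1$, producing the required monomial $z^p\overline{z}^q$ with coefficient $1$, so it only remains to check that $S_m=0$ for $1\le m\le\min\{p,q\}$.

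The key step is to rewrite $S_m=\frac{1}{m!}\sum_{j=0}^m(-1)^{m-j}\binom{m}{j}f(j)$ where
\[
f(j):=\frac{N-2j}{(N-j)(N-j-1)\cdots(N-j-m)},
\]
and then apply partial fractions to decompose
\[
f(j)=\sum_{i=0}^m \frac{(-1)^{m-i}(2i-N)}{i!\,(m-i)!\,(N-j-i)}.
\]
Substituting and exchanging the order of summation, the inner sum over $j$ is evaluated by the classical identity
\[
\sum_{j=0}^m\frac{(-1)^{m-j}\binom{m}{j}}{N-j-i}=\frac{m!}{(N-i)(N-i-1)\cdots(N-i-m)},
\]
after which the resulting double sum is visibly $-S_m$. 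Thus $S_m=-S_m$, forcing $S_m=0$.

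The main obstacle is spotting this self-dual structure: once the partial-fraction decomposition is in hand and the classical $\sum(-1)^{m-j}\binom{m}{j}/(a-j)$ identity is invoked, the cancellation is automatic, and the remaining bookkeeping (handling the factor $(d-1)!$, reconciling factorials, and performing the re-indexing $m=j+k$) is routine.
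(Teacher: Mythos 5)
Your proof is correct and is precisely the ``direct calculation'' that the paper asserts without carrying out: substitute (\ref{Qpqdef}), re-index by $m=j+k$, and check that the coefficient of $z^{p-m}\overline{z}^{q-m}$ equals $\delta_{m,0}$, with the prefactor $\frac{(d-1)!\,p!\,q!}{j!\,(p+q+d-1-j)!}$ arranged exactly so that the $m=0$ term is $1$. Your partial-fraction argument giving $S_m=-S_m$ is a clean way to organize the cancellation for $m\ge1$; it legitimately does not apply at $m=0$, where the rational function $(2x-N)/x$ is not proper, which is why that coefficient survives.
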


The complex analogue of (\ref{bmdefn}) is
\begin{equation}
\label{bqmdefn}
b_{p,q}(\Cd) 
:= \int_\SS\int_\SS \inpro{z,w}^p\overline{\inpro{z,w}}^q\, d\gs(z)\,d\gs(w)
= \begin{cases}
0, & p\ne q;\cr
{(d-1)!p!\over(d-1+p)!}, & p=q.
\end{cases}
\end{equation}

\begin{theorem}
\label{HpqF}
A potential for $\Hom(p,q)=\Hom_{p,q}(\Cd)$ is given by 
\begin{equation}
\label{HompqF}
F(z)= z^p\overline{z}^q-b_{p,q}(\Cd), 
\end{equation}
where $b_{p,q}(\Cd)$ is given by (\ref{bqmdefn}),
and a potential for $\Hom_m(\Cd)$ is given by 
\begin{equation}
\label{HomppF}
F(z) = (z+\overline{z})^m - 2^m b_m(\RR^{2d})
= 2^m\bigl\{ \Re(z)^m- b_m(\RR^{2d})\bigr\}.
\end{equation}
where $b_m(\RR^{2d})$ is given by (\ref{bmdefn}).
\end{theorem}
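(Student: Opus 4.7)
The plan is to write each proposed $F(z)$ as a nonnegative linear combination $\sum f_{pq}Q_{pq}^{(d)}(z)$ in which the constant ($Q_{00}^{(d)}=1$) coefficient vanishes, and then to read off the space $P_\tau$ for which $F$ is a potential from the indices $\tau = \{(p,q) \ne (0,0) : f_{pq} > 0\}$; matching $\tau$ with the indices of the decomposition of $\Hom(p,q)$ (respectively $\Hom_m(\Cd)|_\SS$) finishes the argument via the discussion following (\ref{complexF}).

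For (\ref{HompqF}), I would directly invoke the telescoping identity (\ref{Hpqtele}) from the preceding lemma, which writes $z^p\overline{z}^q$ as a positive combination of the $Q_{p-j,q-j}^{(d)}$ for $0 \le j \le \min\{p,q\}$, with indices $(p-j, q-j)$ matching those of $\Hom(p,q)$. The only subtlety is the $Q_{00}^{(d)}$ term, which arises only when $p = q$ (at $j = p$) with coefficient $\frac{(d-1)!\,p!}{(p+d-1)!}$; a short factorial calculation against (\ref{bqmdefn}) identifies this coefficient as exactly $b_{p,p}(\Cd)$. Subtracting $b_{p,q}(\Cd)$ therefore kills the constant term in both the $p = q$ case and the $p \ne q$ case (where $b_{p,q}(\Cd) = 0$ and no $Q_{00}^{(d)}$ term was present), leaving a positive combination of the required $Q_{p',q'}^{(d)}$.

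For (\ref{HomppF}), the cleanest route is to reduce to the real case. Under the standard $\RR$-linear identification of $\Cd$ with $\RR^{2d}$ (writing $z_j = x_j + i y_j$), the unit spheres coincide as sets, the surface measures agree, and the real inner product satisfies $\inpro{x,y}_\RR = \Re\inpro{x,y}_\Cd = \tfrac{1}{2}(z+\overline{z})$, where $z := \inpro{x,y}_\Cd$. Since polynomials in $z_j, \overline{z_j}$ of total degree $m$ coincide with polynomials in $x_j, y_j$ of total degree $m$, we have $\Hom_m(\Cd)|_\SS = \Hom_m(\RR^{2d})|_\SS$. Applying Lemma \ref{halfdesignlemma} in $\RR^{2d}$, the polynomial $t \mapsto t^m - b_m(\RR^{2d})$ is a potential for $\Hom_m(\RR^{2d})$; substituting $t = \tfrac{1}{2}(z+\overline{z})$ and multiplying by $2^m$ yields the stated form $F(z) = (z+\overline{z})^m - 2^m b_m(\RR^{2d})$.

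The main obstacle I anticipate is making the reduction in the second part fully watertight: one must verify that the variational characterisation (Theorem \ref{varcharthm}) transfers between the complex and real settings, and that no new design constraint arises from treating $\Hom_m(\Cd)$ as a complex rather than real vector space. A self-contained alternative is to expand $(z+\overline{z})^m = \sum_{k=0}^m \binom{m}{k}z^k\overline{z}^{m-k}$ by the binomial theorem and apply (\ref{Hpqtele}) term-by-term; the indices appearing are exactly those of $\Hom_m(\Cd)|_\SS$, namely pairs $(p',q')$ with $p' + q' \le m$ and $p' + q' \equiv m \pmod{2}$, and the $Q_{00}^{(d)}$ coefficient (nonzero only for $m$ even, arising solely from the central term $k = j = m/2$) can be shown equal to $2^m b_m(\RR^{2d})$ by a short factorial manipulation.
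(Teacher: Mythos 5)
Your proof of (\ref{HompqF}) is exactly the paper's: invoke the telescoping identity (\ref{Hpqtele}), observe that a $Q_{00}^{(d)}$ term occurs only when $p=q$ (at $j=p$, with coefficient $(d-1)!\,p!/(p+d-1)!=b_{p,p}(\Cd)$), and subtract $b_{p,q}(\Cd)$ to remove it; this part is correct and needs no comment. For (\ref{HomppF}) your primary route differs from the paper's. The paper expands $(z+\overline z)^m=\sum_k\binom{m}{k}z^k\overline z^{m-k}$ and applies (\ref{Hpqtele}) term by term, evaluating the subtracted constant $\binom{m}{m/2}b_{m/2,m/2}(\Cd)=2^mb_m(\RR^{2d})$ directly --- this is precisely the ``self-contained alternative'' you sketch at the end, and your index bookkeeping there is right. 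Your preferred route instead reduces to Lemma \ref{halfdesignlemma} on $\RR^{2d}$ via $\Re\inpro{z,w}_{\Cd}=\inpro{x,y}_{\RR^{2d}}$. This is legitimate, and the ``watertightness'' worry you raise is easily dispatched: $\Hom_m(\Cd)$ is spanned by the monomials $z^\ga\overline z^\gb$ with $|\ga|+|\gb|=m$ and is closed under conjugation, so a rule with real weights integrates it if and only if it integrates $\Hom_m(\RR^{2d})$; and the identity $Q_k^{(2d)}(\inpro{x,y})=\sum_{p+q=k}Q_{pq}^{(d)}(\inpro{z,w})$ recorded in Section \ref{Gegenorthsect} converts the nonnegative real Gegenbauer expansion of $x^m-b_m(\RR^{2d})$ from Lemma \ref{halfdesignlemma} into a nonnegative expansion in the $Q_{pq}^{(d)}$ supported on exactly the indices of $\Hom_m(\Cd)|_\SS$. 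What the reduction buys you is that the factorial identification of the constant term comes for free from the real case; what the paper's direct expansion buys is independence from the real theory and from the decomposition identity of Section \ref{Gegenorthsect}. Either way the argument closes, so the proposal is correct.
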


\begin{proof} We note that all the coefficients in the expansion 
(\ref{Hpqtele}) are positive. Hence,
for $p\ne q$, this gives a potential
for $\Hom(p,q)=\oplus_j H(p-j,q-j)$, 
and for $p=q$, this is a potential for $\Hom(p,p)$ plus the $Q_{00}^{(d)}(z)$ constant term
$$ {(d-1)!p!p!\over p!(p+d-1)!} Q_{00}^{(d)}(z)
= {(d-1)!p!\over(p+d-1)!} = {1\over{p+d-1\choose p-1}} = b_{p,p}(\Cd). $$
This gives (\ref{HompqF}) in both cases.

A potential for $\Hom_m(\Cd)=\bigoplus_j \Hom(j,m-j)$ is given by
$$ F(z) 
= \sum_{j=0}^m {m\choose j} \bigl(z^j\overline{z}^{m-j}-b_{j,m-j}(\Cd)\bigr)
= (z+\overline{z})^m -\sum_{j=0}^m {m\choose j} b_{j,m-j}(\Cd). $$
The constant term subtracted above is zero, unless $m$ is even, in which case it is
$$ {m\choose{m\over 2}} b_{{m\over2},{m\over2}}(\Cd)
= {m!\over({m\over2})!({m\over2})!} {(d-1)!({m\over2})!\over(d-1+{m\over2})!}
= 2^m {m!\over2^{m\over2} ({m\over2})!} {(d-1)!\over 2^{m\over2}(d-1+{m\over2})!}
= 2^m b_m(\RR^{2d}).
$$
Hence we obtain (\ref{HomppF}). 
\end{proof}


The nonnegativity of the potential $A_{F,w}(\Phi)$ given by (\ref{HompqF}) is given 
as Lemma 3.3 in \cite{RS14}, with equality asserted when it holds
for all $(p,q)$ in a lower set $\tau$ for a unit-norm $\tau$-design $\Phi$. 
The complex spherical designs for the special case $\Hom(p,p)$ are ``projective designs'', 
which we will discuss in the next section.

\begin{example}
A potential for the holomorphic 
polynomials $\Hom(k,0)=H(k,0)$ is given by $F(z)=z^k$,
and for the holomorphic polynomials of degree $\le k$ a 
potential is given by any linear combination of $z,z^2,\ldots,z^k$
with positive coefficients.
\end{example}

We now give the weighted version of Theorem \ref{HpqF}.

\begin{theorem} 
\label{weightedversionHompq}
Let $m=p+q$. For any vectors $v_1,\ldots,v_n$ in $\Cd$, not all zero, we have
\begin{equation}
\label{Hom(p,q)weightedvarineq}
\sum_{j=1}^n\sum_{k=1}^n \inpro{v_j,v_k}^p\overline{\inpro{v_j,v_k}}^q
\ge b_{p,q}(\Cd)\Bigl(\sum_{\ell=1}^n \norm{v_\ell}^{p+q}\Bigr)^2,
\end{equation}
with equality if and only $(v_j)$ is an $m$-weighted spherical 
design for $\Hom(p,q)$, and
\begin{equation}
\label{Hom(m)weightedvarineq}
\sum_{j=1}^n\sum_{k=1}^n (\Re\inpro{v_j,v_k})^m
\ge b_m(\RR^{2d})\Bigl(\sum_{\ell=1}^n \norm{v_\ell}^{m}\Bigr)^2,
\end{equation}
with equality if and only if $(v_j)$ is an $m$-weighted spherical half-design of 
order $m$.
\end{theorem}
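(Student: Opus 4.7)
The plan is to reduce to the unweighted Theorem \ref{HpqF} by the standard $m$-weights rescaling, entirely parallel to how Theorem \ref{mweightsuffthm} is deduced from Lemma \ref{halfdesignlemma} and Theorem \ref{tdesignvarthm}. Letting $S := \sum_\ell \norm{v_\ell}^m > 0$, I set $u_j := v_j/\norm{v_j}$ (for $v_j\ne 0$) and $w_j := \norm{v_j}^m/S$, and I would apply Theorem \ref{HpqF} to the unit vectors $(u_j)$ with these weights, taking the potential $F(z) = z^p\overline{z}^q - b_{p,q}(\Cd)$. The resulting inequality on the $(u_j)$ can then be cleared of normalisations by the homogeneity that is built into the bidegree $(p,q)$.

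The key identity, which works exactly because $p+q = m$, is
$$w_j w_k \inpro{u_j,u_k}^p \overline{\inpro{u_j,u_k}}^q
= \frac{\norm{v_j}^m \norm{v_k}^m}{S^2} \cdot \frac{\inpro{v_j,v_k}^p \overline{\inpro{v_j,v_k}}^q}{\norm{v_j}^{p+q}\norm{v_k}^{p+q}}
= \frac{1}{S^2}\inpro{v_j,v_k}^p \overline{\inpro{v_j,v_k}}^q.$$
Summing over $j,k$ and invoking $A_{F,w}(\Phi) \ge 0$ from Theorem \ref{HpqF} gives
$$\frac{1}{S^2}\sum_{j,k}\inpro{v_j,v_k}^p \overline{\inpro{v_j,v_k}}^q \;\ge\; b_{p,q}(\Cd),$$
which after multiplying by $S^2$ is exactly (\ref{Hom(p,q)weightedvarineq}). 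The equality clause in Theorem \ref{HpqF} says equality occurs precisely when $(u_j),(w_j)$ is a weighted spherical design for $\Hom(p,q)$, which by (\ref{wjdef}) is the definition of $(v_j)$ being an $m$-weighted $\Hom(p,q)$-design.

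The inequality (\ref{Hom(m)weightedvarineq}) is obtained identically, using instead the potential $F(z) = 2^m(\Re(z)^m - b_m(\RR^{2d}))$ from Theorem \ref{HpqF} and the analogous homogeneity
$(\Re\inpro{v_j,v_k})^m = \norm{v_j}^m\norm{v_k}^m (\Re\inpro{u_j,u_k})^m$. The rescaling by $S^2$ and the transfer of the equality condition (now for $\Hom_m(\Cd)$, i.e., an $m$-weighted half-design of order $m$) go through verbatim.

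I do not anticipate any real obstacle: the whole argument is a one-line homogeneity rewrite plus a direct application of Theorem \ref{HpqF}. The only minor bookkeeping point is indices $j$ with $v_j=0$, which carry $w_j=0$ and contribute zero to both the potential and to the left-hand sides of (\ref{Hom(p,q)weightedvarineq}) and (\ref{Hom(m)weightedvarineq}), so they can simply be discarded from the sums before defining $u_j$.
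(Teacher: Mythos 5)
Your proposal is correct and follows essentially the same route as the paper: the paper's proof likewise evaluates the potential $F(z)=z^p\overline{z}^q-b_{p,q}(\Cd)$ of Theorem \ref{HpqF} at the $m$-weights, uses the bidegree-$(p,q)$ homogeneity (with $p+q=m$) to cancel the normalisations, and multiplies through by $C^2=\bigl(\sum_\ell\norm{v_\ell}^m\bigr)^2$, with the $\Hom_m(\Cd)$ case handled identically via (\ref{HomppF}). No gaps.
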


\begin{proof} Let $m=p+q$, and $C=\sum_\ell\norm{v_\ell}^m>0$. 
Then the potential given by (\ref{HompqF}) for the $m$-weights is
\begin{align*}
A_{F,w}(\Phi)
&=\sum_j\sum_k {\norm{v_j}^m\norm{v_k}^m\over C^2}
\Bigl\{ \inpro{{v_j\over\norm{v_j}},{v_k\over\norm{v_k}}}^p
\overline{\inpro{{v_j\over\norm{v_j}},{v_k\over\norm{v_k}}}}^q-b_{p,q}(\Cd) 
\Bigl\} \cr
&= {1\over C^2}\sum_j\sum_k\inpro{v_j,v_k}^p\overline{\inpro{v_j,v_k}}^q-b_{p,q}(\Cd).
\end{align*}
Multiplying this by $C^2$ and rearranging gives (\ref{Hom(p,q)weightedvarineq}).

For $P=\Hom(m)=\Hom_m(\Cd)$
a similar argument using (\ref{HomppF}) gives (\ref{Hom(m)weightedvarineq}).
\end{proof}

We will refer to spherical designs for $\Hom(p,q)$ as {\bf spherical $(p,q)$-designs}, 
which generalises the definition of complex spherical $(t,t)$-designs. We now show that 
the canonical potential for these has a simple form.

\begin{lemma}
\label{complexGegsumslemma}
The complex Gegenbauer polynomials satisfy
\begin{equation}
\label{QPqsumformula}
\sum_{j=0}^{\min\{p,q\}} Q_{p-j,q-j}^{(d)}(z) = {d\over p+q+d} Q_{pq}^{(d+1)}(z),
\end{equation}
equivalently,
\begin{equation}
\label{QPqdiff}
Q_{p,q}^{(d)}(z) = {d\over p+q+d} Q_{pq}^{(d+1)}(z) - {d\over p+q+d-2} Q_{p-1,q-1}^{(d+1)}(z).
\end{equation}
\end{lemma}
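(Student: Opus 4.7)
The plan is to derive (\ref{QPqdiff}) as a simple difference of instances of (\ref{QPqsumformula}), and then to prove (\ref{QPqsumformula}) itself by comparing monomial coefficients using the explicit polynomial representation (\ref{Qpqdef}) of $Q_{pq}^{(d)}$.

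First I would verify the equivalence of the two displayed identities. Subtracting the formula (\ref{QPqsumformula}) at indices $(p-1,q-1)$ from the same formula at $(p,q)$, and reindexing via $\sum_{j=1}^{\min\{p,q\}}Q_{p-j,q-j}^{(d)}=\sum_{j=0}^{\min\{p,q\}-1}Q_{(p-1)-j,(q-1)-j}^{(d)}$, the left-hand side collapses to $Q_{p,q}^{(d)}(z)$ and the right-hand side yields (\ref{QPqdiff}); summing (\ref{QPqdiff}) back up telescopes to (\ref{QPqsumformula}). (Here I use the convention $Q_{p',q'}^{(d+1)}:=0$ when $\min\{p',q'\}<0$, which is also consistent with (\ref{Qpqdef}).) So it suffices to establish (\ref{QPqsumformula}).

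For that I would set $m=p+q$ and compare, for each $0\le k\le \min\{p,q\}$, the coefficients of $z^{p-k}\overline{z}^{q-k}$ on both sides. Using (\ref{Qpqdef}), the coefficient on the right side is
$${d\over m+d}\cdot{m+d\over d!}\cdot(-1)^k{(m+d-k-1)!\over k!\,(p-k)!\,(q-k)!}={(-1)^k(m+d-k-1)!\over(d-1)!\,k!\,(p-k)!\,(q-k)!}.$$
On the left, this same monomial arises from $Q_{p-r,q-r}^{(d)}$ via its $(k-r)$-th summand, for $0\le r\le k$, contributing
$${(-1)^{k-r}(m+d-1-2r)\over(d-1)!\,(p-k)!\,(q-k)!}\cdot{(m+d-k-r-2)!\over(k-r)!}.$$
Pulling out the common factor and writing $N:=m+d-1$, matching coefficients reduces to the single combinatorial identity
$$\sum_{r=0}^{k}(-1)^{r}(N-2r){(N-k-r-1)!\over(k-r)!}={(N-k)!\over k!}.$$

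The key algebraic trick for this last identity is the splitting $N-2r=(N-k-r)+(k-r)$, which rewrites each summand as
$${(N-k-r)!\over(k-r)!}+{(N-k-r-1)!\over(k-r-1)!}$$
(the second fraction being interpreted as $0$ when $r=k$). Shifting the index $r\mapsto r-1$ in the second piece produces an alternating cancellation with the first piece, so the whole sum telescopes down to the $r=0$ contribution of the first piece, namely $(N-k)!/k!$, as required. The whole argument is routine coefficient bookkeeping consistent with the author's phrase ``direct calculation'', and there is no serious conceptual obstacle; the only small flash of insight is noticing the telescoping split in the last step. An alternative route via the Jacobi contiguous relations for $P^{(d-2,|p-q|)}_{\min\{p,q\}}$ in the factored form of (\ref{Qpqdef}) is available but ultimately heavier.
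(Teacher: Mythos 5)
Your proposal is correct and follows essentially the same route as the paper, which likewise notes the equivalence of (\ref{QPqsumformula}) and (\ref{QPqdiff}) and then appeals to a direct comparison of monomial coefficients in (\ref{Qpqdef}); the only cosmetic difference is that you verify the sum form while the paper verifies the difference form, and you actually carry out the coefficient bookkeeping (including the telescoping identity $\sum_{r=0}^{k}(-1)^{r}(N-2r)\tfrac{(N-k-r-1)!}{(k-r)!}=\tfrac{(N-k)!}{k!}$) that the paper leaves as a ``direct calculation''. Your computation checks out.
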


\begin{proof}
The equivalence of (\ref{QPqsumformula}) and
(\ref{QPqdiff}) is obvious. Thus it suffices to prove (\ref{QPqdiff}) by direct calculation
from (\ref{Qpqdef}), i.e., by equating coefficients.
\end{proof}

From Lemma \ref{complexGegsumslemma}, we have

\begin{example}
\label{pq-designcanonpot}
(Spherical $(p,q)$-designs) 
The canonical potential for spherical $(p,q)$-designs,
i.e., $P=\Hom(p,q)=\Hom_{p,q}(\Cd)$ is given by
$$ \sum_{j=0}^{\min\{p,q\}} Q_{p-j,q-j}^{(d)}(z) = {d\over p+q+d} Q_{pq}^{(d+1)}(z). $$
\end{example}

\section{Projective spherical designs}
\label{projectivedesignsect}

We will say that a (weighted) spherical design $(v_j)$ for the polynomial space $P$
is a {\bf projective spherical design} if $(c_jv_j)$ is a spherical design
for all choices of unit scalars $c_j$. Clearly such a design 
can be thought of as a sequence of lines.
Since a projective design depends on the $(v_j)$ up to unitary equivalence and 
multiplication by unit scalars, 
it follows from \cite{CW16} 
that it can be characterised in terms of its $m$-products
$$ \gD(v_{j_1},\ldots,v_{j_m}):=\inpro{v_{j_1},v_{j_2}}\inpro{v_{j_2},v_{j_3}}
\cdots\inpro{v_{j_m},v_{j_1}}, \qquad 1\le j_1,\ldots,j_m\le n. $$
Hence
the reproducing kernels $K(x,y)$ which are invariant under 
this equivalence, i.e., replacing $(x,y)$ with $(c_x Ux, c_y Uy)$, 
where $U$ is unitary and $c_x,c_y$ are unit scalars,
are
those which are functions of $\inpro{x,y}^2$ and $|\inpro{x,y}|^2$,
respectively. Thus the polynomial spaces
giving projective spherical designs in the real and complex cases are 
\begin{itemize}
\item
The spaces $P_L$ consisting of even 
polynomials on $\Rd$, i.e., $L\subset\{0,2,4,\ldots\}$.
\item 
The spaces $P_\tau$ of polynomials on $\Cd$, where $\tau=\tau^*\subset\{(0,0),(1,1),\ldots\}$.
\end{itemize}
This notion of projective designs generalises 
to designs on the Grassmannian \cite{EG19}.

It follows from the multiplication rules for Gegenbauer polynomials
(Theorem \ref{Qpqprodtheorem}), that the univariate functions $F$ giving 
potentials for projective designs are closed under multiplication.

The projective spherical designs for the (projectively unitarily invariant) 
spaces
$$ P_L = \Hom(2t)= \Harm(0) \oplus \Harm(2) \oplus\cdots\oplus\Harm(2t), 
\quad L={0,2,\ldots,2t}, $$
$$ P_\tau =\Hom(t,t)=H(0,0)\oplus H(1,1)\oplus\cdots\oplus H(t,t),
\quad \tau=\{(0,0),(1,1),\ldots,(t,t)\}, $$
are of particular interest. 
In the real case, these are the {\bf spherical half-designs} of order $2t$ 
\cite{KP11} and the real {\bf spherical $(t,t)$-designs} \cite{W17}.
In the complex case,
they are known as 
{\bf projective $t$-designs} on Delsarte spaces \cite{H90},\cite{W20},
complex {\bf spherical semi-designs} of order $2t$ \cite{KP17},
and complex {\bf spherical $(t,t)$-designs} \cite{W17}.
The potentials of Lemma \ref{halfdesignlemma} and 
Theorem \ref{HpqF} for these are
$$ F(x)=x^{2t}-b_{2t}(\Rd), \qquad
F(z)=|z|^{2t}-b_{t,t}(\Cd). $$
We may combine the characterisations of Theorem \ref{mweightsuffthm}
and Theorem \ref{weightedversionHompq} to obtain

\begin{example}
\label{(t,t)-designs}
For any vectors $v_1,\ldots,v_n$ in $\Fd$, not all zero, we have
$$ \sum_{j=1}^n\sum_{k=1}^n |\inpro{v_j,v_k}|^{2t}
\ge c_t(\Fd)\Bigl(\sum_{\ell=1}^n \norm{v_\ell}^{2t}\Bigr)^2, $$
where
$$ c_t(\Rd):=b_{2t}(\Rd)={1\cdot3\cdot5\cdots(2t-1)\over d(d+2)\cdots(d+2t-2)}, \qquad
c_t(\Cd):=b_{t,t}(\Cd)={1\over{t+d-1\choose t}}, $$
with equality if and only if $(v_j)\subset\Fd$ is a spherical $(t,t)$-design for $\Fd$.
\end{example}
This characterisation is given in \cite{KP11},\cite{KP17} and \cite{W17},
and the inequalities were first given by Sidel'nikov \cite{Si74b} and Welch \cite{W74}.

In Table \ref{P-designlist}, we summarise our calculations of potentials from 
Sections \ref{realdesignsect}, \ref{complexdesignsect} and \ref{projectivedesignsect}.

\setlength{\tabcolsep}{5pt}
\renewcommand{\arraystretch}{1.5}

\begin{table}
\fontsize{10pt}{10pt}\selectfont
\caption{\small Selected real and complex spherical $P$-designs and their potentials. 
The inclusion of zero or not in the index set is for aesthetics. 
Those marked with a * are canonical potentials.  }
\begin{center}
\label{P-designlist}       
\begin{tabular}{| >{$}l<{$} |>{$}l<{$} | >{$}l<{$} | >{$}l<{$} | l | }
\hline
\hbox{$P$-design} & \hbox{$P$ and the index set $L$ or $\tau$} & \hbox{Potential $F$} & \hbox{Comments} \\
\hline
\hbox{harmonic index $m$} & \Harm_m(\Rd) & Q_m^{(d)}(x) & \hbox{Example \ref{harmonicindextQ} *} \\
& \{m\} && \\
\hbox{real tight frame} & \Hom_2(\Rd) & x^2-{1\over d} & \hbox{Example \ref{realtightframes} *} \\
& \{2\} && \\
\hbox{real half-design} & \Hom_m(\Rd) & x^m-b_m(\Rd) & \hbox{Lemma \ref{halfdesignlemma}} \\ 
& \{m,m-2,\ldots\} & C_m^{({d\over2})}(x)-1,\ \hbox{$m$ even} & \hbox{Example \ref{canpotrealhalf} *} \\ 
&& C_m^{({d\over2})}(x),\ \hbox{$m$ odd} & \\ 
\hbox{spherical $t$-design} & \Pi_t(\Rd) & 
C_t^{({d\over2})}(x)+C_{t-1}^{({d\over2})}(x)-1 & 
\hbox{Corollary \ref{canonicalpotrealtdesigns} *} \\
& \{1,2,\ldots,t\} & 
\sum\limits_{m=t,t-1}c_m\bigl(x^m-b_m(\Rd)\bigr) & \hbox{Theorem \ref{tdesignvarthm} } \\ 
\hbox{harmonic index $(p,q)$} & H(p,q) & Q_{pq}^{(d)}(z) & \hbox{Equation (\ref{Qpqdef}) *} \\
& \{(p,q)\} && \\
\hbox{complex tight frame} & \Hom_{1,1}(\Cd) & \hbox{$|z|^2-{1\over d}$} & \hbox{Example \ref{complextightframeex} *} \\
& \{(1,1)\} && \\
\hbox{spherical $(p,q)$-design} & \Hom_{p,q}(\Cd) & z^p\overline{z}^q-b_{p,q}(\Cd) & \hbox{Theorem \ref{HpqF}} \\
& \{(p,q),(p-1,q-1),\ldots\} & {d\over p+q+d} Q_{pq}^{(d+1)}(z), \quad p\ne q & \hbox{Example \ref{pq-designcanonpot} *}  \\
\hbox{spherical $(t,t)$-design} & \Hom_{t,t}(\Cd) 
& \hbox{$|z|^{2t}-c_t(\Cd)$} & \hbox{Example \ref{(t,t)-designs}} \\
& \{(1,1),\ldots(t,t)\} && \\
\hbox{complex half-design} & \Hom_m(\Cd) & \Re(z)^m - b_m(\RR^{2d}) & \hbox{Theorem \ref{HpqF}} \\
& \{(p,q): p+q=m,m-2,\ldots\} && \\
\hbox{spherical $t$-design} & \Pi_t(\Cd) & \sum\limits_{m=t-1,t} c_m\bigl(\Re(z)^m - b_m(\RR^{2d})\bigr) & \hbox{Theorem \ref{HpqF}} \\
& \{(p,q): p+q=1,2,\ldots,t\} && \\
\hline
\end{tabular}
\end{center}
\end{table}

\vfil\eject

\section{Orthogonality of the Gegenbauer polynomials}
\label{Gegenorthsect}

The polynomials $Q_k^{(d)}$ of (\ref{Qkdefn}) that give the reproducing kernel for
$\Harm_k(\Rd)$ are orthogonal with respect to the Gegenbauer
weight $(1-x^2)^{{d-2\over2}-{1\over2}}$ on $[-1,1]$.
Indeed
\begin{equation}
\label{realGegorthog}
\inpro{Q_j,Q_k}_\geg 
= \begin{cases}
Q_j(1)=\dim(\Harm_k(\Rd)), & j=k; \cr
0, & j\ne k,
\end{cases}
\end{equation}
where
$$ \inpro{f,g}_\geg 
:= 
{\gG({1\over2}d)\over\sqrt{\pi}\gG({1\over2}d-{1\over2})}
 \int_{-1}^1 f(x)g(x)\, (1-x^2)^{d-3\over2}\, dx. $$
We will refer to these, and the polynomials $Q_{pq}^{(d)}$ of (\ref{Qpqdef}) 
giving the reproducing kernel for $H(p,q)$ as (real or complex) {\bf Gegenbauer polynomials}.
The terms ``Jacobi'' and ``disk'' polynomial are sometimes used for the 
latter \cite{RS14}, \cite{MOP11}. 

We now show the polynomials $Q_{pq}^{(d)}$ 
are orthogonal with respect to 
the Gegenbauer weight $(1-|z|^2)^{d-2}$ on the unit disc
$\DD=\{z\in\CC:|z|\le1\}$ in $\CC$ (cf.\ \cite{MOP11}).

\begin{proposition} For $d>1$, the (complex) Gegenbauer polynomials 
$Q_{pq}^{(d)}$ of {\rm (\ref{Qpqdef})} are orthogonal 
with respect to the inner product
\begin{align*}
\inpro{f,g}_\cgeg 
& :={d-1\over\pi}\int_\DD f(z)\overline{g(z)}(1-|z|^2)^{d-2} \, dA(z)  \cr
&= {d-1\over\pi} \int_0^1\int_0^{2\pi} f(re^{i\gth})
\overline{g(re^{i\gth})} \,(1-r^2)^{d-2} r\,dr\,d\gth,
\end{align*}
where $A$ is the area measure on $\CC=\RR^2$, and
\begin{equation}
\label{complexGegorthog}
\inpro{Q_{pq},Q_{k\ell}}_\cgeg =
\begin{cases}
Q_{pq}(1)=\dim(H(p,q)), & (p,q)=(k,\ell); \cr
0, & (p,q)\ne(k,\ell). 
\end{cases}
\end{equation}
\end{proposition}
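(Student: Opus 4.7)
The plan is to deduce the disc orthogonality from the known orthogonality of the spaces $H(p,q)$ on the complex sphere by pushing the surface measure on $\SS(\Cd)$ forward via the map $z\mapsto z_1$. All the ingredients needed are already available in the paper: the fact that $Q_{pq}^{(d)}(\inpro{z,w})$ is the reproducing kernel of $H(p,q)\subset L_2(\SS)$, the orthogonal direct sum decomposition on $\SS$, and the identities $Q_{pq}(1)=\dim H(p,q)$ and $\overline{Q_{pq}(u)}=Q_{pq}(\overline u)$.

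\textbf{Step 1.} Fix $w\in\SS$ and consider, on $\SS$, the functions $z\mapsto K_{pq}(z,w)=Q_{pq}^{(d)}(\inpro{z,w})\in H(p,q)$ and $z\mapsto K_{k\ell}(z,w)\in H(k,\ell)$. Using the Hermitian symmetry of the reproducing kernel, $\overline{K_{k\ell}(z,w)}=K_{k\ell}(w,z)$, together with the reproducing property, I compute
\[
\int_\SS Q_{pq}^{(d)}(\inpro{z,w})\,\overline{Q_{k\ell}^{(d)}(\inpro{z,w})}\,d\gs(z)
=\inpro{K_{pq}(\cdot,w),K_{k\ell}(\cdot,w)}_{L_2(\SS)}.
\]
By orthogonality of $H(p,q)$ and $H(k,\ell)$ this vanishes when $(p,q)\ne(k,\ell)$, while for $(p,q)=(k,\ell)$ the reproducing property evaluates the inner product at $w$, giving $K_{pq}(w,w)=Q_{pq}^{(d)}(1)=\dim H(p,q)$.

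\textbf{Step 2.} Specialise to $w=e_1$, so that $\inpro{z,e_1}=z_1$, and use the standard disintegration of the normalised surface measure on the complex sphere: for $d>1$, the pushforward of $\gs$ under $z\mapsto z_1$ has density $\frac{d-1}{\pi}(1-|\zeta|^2)^{d-2}$ on $\DD$ (this follows, e.g., from writing $|z_1|^2\sim\mathrm{Beta}(1,d-1)$ with argument uniform in $[0,2\pi)$ and converting the polar density into an area density). Consequently
\[
\frac{d-1}{\pi}\int_\DD Q_{pq}^{(d)}(\zeta)\,\overline{Q_{k\ell}^{(d)}(\zeta)}\,(1-|\zeta|^2)^{d-2}\,dA(\zeta)
=\int_\SS Q_{pq}^{(d)}(z_1)\,\overline{Q_{k\ell}^{(d)}(z_1)}\,d\gs(z),
\]
and combining with Step 1 yields (\ref{complexGegorthog}).

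\textbf{Main obstacle.} There is no serious obstacle: the proof is really a one-line reduction once one has (a) the reproducing/kernel identification for $H(p,q)$ and (b) the marginal density of a single complex coordinate of a uniform point on $\SS(\Cd)$. The only point requiring some care is correctly matching the conjugations so that $\inpro{K_{pq}(\cdot,w),K_{k\ell}(\cdot,w)}_{L_2(\SS)}$ genuinely pairs an element of $H(p,q)$ against one of $H(k,\ell)$ (rather than $H(\ell,k)$); this is handled cleanly by the Hermitian property $\overline{K(z,w)}=K(w,z)$ of any reproducing kernel. An alternative, essentially equivalent, route is to expand $Q_{pq}^{(d)}$ in monomials $z^{p-j}\overline{z}^{q-j}$ via (\ref{Qpqdef}) and integrate directly in polar coordinates using the Beta-function identities, but the reproducing-kernel argument above is shorter and avoids combinatorial bookkeeping.
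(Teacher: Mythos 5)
Your proof is correct, but it takes a different route from the one the paper uses for this proposition. The paper's proof is a direct computation in polar coordinates: it writes $Q_{pq}^{(d)}(re^{i\gth})$ via its Jacobi-polynomial form from (\ref{Qpqdef}), integrates in $\gth$ to kill all pairs with $p-q\ne p'-q'$, and then substitutes $x=2r^2-1$ to reduce the radial integral to the classical orthogonality of $P_m^{(d-2,k)}$ on $[-1,1]$; the normalising constant is left as ``a straightforward calculation.'' You instead transfer the known $L_2(\SS)$-orthogonality of the spaces $H(p,q)$ down to the disc via the reproducing-kernel identity $\inpro{K_{pq}(\cdot,w),K_{k\ell}(\cdot,w)}_{L_2(\SS)}=\gd_{(p,q),(k,\ell)}\,Q_{pq}(1)$ and the pushforward of $\gs$ under $z\mapsto z_1$, whose density $\tfrac{d-1}{\pi}(1-|\zeta|^2)^{d-2}$ you justify correctly. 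In fact the paper explicitly records your argument as an alternative immediately after its proof (citing \S1.4.5 of \cite{R80} for the disintegration formula), and uses it there precisely to nail down the constant $Q_{pq}(1)=\dim H(p,q)$. The trade-off is clear: the paper's computation is self-contained modulo classical Jacobi orthogonality and needs no facts about the spaces $H(p,q)$ beyond the explicit formula for $Q_{pq}^{(d)}$, whereas your argument is shorter, gives the normalisation for free, and makes the conceptual origin of the weight $(1-|z|^2)^{d-2}$ transparent, at the cost of importing the marginal-density formula for a single complex coordinate of a uniform point on $\SS(\Cd)$. Your handling of the conjugations via $\overline{K(z,w)}=K(w,z)$ is the one place where care is needed, and you got it right.
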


\begin{proof} We write $z=re^{i\gth}$, so that 
$$ Q_{pq}^{(d)}(z) 
= c_{pq}^{(d)} \, (re^{i\theta})^{p-m}(r\overline{e^{i\theta}})^{q-m}
{P^{(d-2,|p-q|)}_{m} (2r^2-1) \over P^{(d-2,|p-q|)}_{m} (1)}, 
\qquad m:=\min\{p,q\}. $$
By integrating in $\gth$ first, we see that 
$\inpro{Q_{pq}^{(d)},Q_{p'q'}^{(d)}}_\cgeg=0$, except when $p-q = p'-q'$. 
In this case, we may suppose, without loss of generality, that
$p-q = p'-q'=k\ge0$, i.e., $m=q$, $m'=q'$,
 so that 
$(re^{i\theta})^{p-m}(r\overline{e^{i\theta}})^{q-m}
(r\overline{e^{i\theta}})^{p'-m'}(re^{i\theta})^{q'-m'}
=r^{2k}$,
and 
$$ \inpro{Q_{pq}^{(d)},Q_{p'q'}^{(d)}}_\cgeg 
=  2 c_{pq}^{(d)}c_{p'q'}^{(d)}
\int_0^1 r^{2k}
P_m^{(d-2,k)}(2r^2-1)
P_{m'}^{(d-2,k)}(2r^2-1) \, (1-r^2)^{d-2}\, r\,dr. $$
By making the change of variables
$x=2r^2-1$, 
the integral in $r$ above becomes
$$ \int_{-1}^1 \left({1+x\over2}\right)^{k}
P_m^{(d-2,k)}(x)
P_{m'}^{(d-2,k)}(x) \, \left({1-x\over2}\right)^{d-2}\, {1\over 4} dx, $$
which is zero, unless $m=m'$, in which case $(p,q)=(p',q')$.
The calculation of the constant for the nonzero inner product
is a straight forward calculation.
\end{proof}

This orthogonality of the polynomials $Q_{pq}^{(d)}$ can also be proved 
from the reproducing kernel property, 
by using the orthogonality of the $H(p,q)$, and the 
result (see \S1.4.5 of \cite{R80}) that for $\SS$ the unit ball in $\Cd$
and $f$ a univariate function
$$ \int_\SS f(\inpro{z,w})\,d\gs(w) = {d-1\over\pi} \int_\DD 
f(\zeta)(1-|\zeta|^2)^{d-2} \,dA(\zeta), \qquad z\in\Cd,\ \norm{z}=1. $$
In particular, we can calculate the orthogonality constant
\begin{align*} 
Q_{pq}^{(d)}(1)
= \overline{Q_{pq}^{(d)}(\inpro{z,z})}
&= \int_\SS  Q_{pq}^{(d)}(\inpro{z,w})  \overline{Q_{pq}^{(d)}(\inpro{z,w})} \,
d\gs(w) \cr
&= {d-1\over\pi} \int_\DD 
| Q_{pq}^{(d)}(\zeta)|^2(1-|\zeta|^2)^{d-2} \,dA(\zeta)
= \inpro{Q_{pq}^{(d)},Q_{pq}^{(d)}}_\cgeg.
\end{align*}

Since $\displaystyle \Harm_k(\Cd)=\bigoplus_{p+q=k} H(p,q)$, we have
$$ Q_k^{(2d)}\bigl((x,y)\bigr) = \sum_{p+q=k} Q_{pq}^{(d)}(z), \qquad z=x+iy\in\Cd. $$
For $d=1$, the polynomials $Q_{pq}^{(1)}(z)$ can be viewed 
as being orthogonal with respect to the ``singular Gegenbauer weight''
$$ \inpro{f,g}_\cgeg = {1\over 2\pi} \int_0^{2\pi} f(e^{i\gth})
\overline{g(e^{i\gth})} \,d\gth. $$


\section{Products of the Gegenbauer polynomials}
\label{Gegenprods}

To construct positive functions $F$ giving a potential for
a spherical design, it is useful to know the Gegenbauer
expansion for a product of Gegenbauer polynomials.
In the real case, there is the following celebrated formula 
dating back to Rogers (1895)
%
\begin{equation}
\label{Qkprods}
Q_k^{(d)}  Q_l^{(d)} = \sum_{j=0}^{\min\{k,l\}} { (k+\nu)(l+\nu)
 (k+l-2j)!  (\nu)_j(\nu)_{k-j}(\nu)_{l-j}(2\nu)_{k+l-j} \over \nu (k+l+\nu-j)
j!(k-j)!(l-j)!(\nu)_{k+l-j} (2\nu)_{k+l-2j} } Q_{k+l-2j}^{(d)}, 
\end{equation}
where $\nu:={d-2\over 2}$, $(\nu)_j$ is the Pochhammer symbol,
and the coefficients are clearly positive.
We will denote by $k\cdot l$ the degrees of the Gegenbauer polynomials 
occuring in (\ref{Qkprods}), i.e.,
\begin{equation}
\label{realGegindexproduct}
k\cdot l :=\{k+l-2j:0\le j\le m \}, \qquad m:=\min\{k,l\},
\end{equation}
which we extend to subsets in the natural way
\begin{equation}
\label{realGegindexproductsubsets}
K\cdot L := \bigcup_{k\in K\atop l\in L} k\cdot l.
\end{equation}
As an example, for $L=\{1,4\}$, we have 
$L\cdot L=\{0,2,3,4,5,6,8\}$.

\begin{lemma}
\label{Qkprodlemma}
If $F$ and $G$ give potentials for 
$P_L$ and $P_K$ (on the real sphere),
then $FG$ gives a potential for $P_{L\cdot K}$.
\end{lemma}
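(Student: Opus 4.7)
The plan is to multiply out $FG$ in the Gegenbauer basis and invoke the classical Rogers linearisation formula (\ref{Qkprods}), whose coefficients are manifestly nonnegative. Concretely, I would write $F=\sum_{\ell\in L} c_\ell Q_\ell^{(d)}$ and $G=\sum_{k\in K} d_k Q_k^{(d)}$ with $c_\ell,d_k>0$ (allowing a constant term by formally including $\ell=0$ or $k=0$, noting $Q_0^{(d)}\equiv 1$), and expand
$$ FG = \sum_{\ell\in L}\sum_{k\in K} c_\ell d_k\, Q_\ell^{(d)} Q_k^{(d)}. $$

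Next, I would apply (\ref{Qkprods}) term by term to replace each product $Q_\ell^{(d)} Q_k^{(d)}$ by a nonnegative linear combination of the Gegenbauer polynomials $Q_{\ell+k-2j}^{(d)}$ for $0\le j\le\min\{\ell,k\}$. Collecting coefficients gives an expansion $FG=\sum_m e_m Q_m^{(d)}$ with all $e_m\ge 0$, and $e_m>0$ precisely when $m=\ell+k-2j$ for some admissible triple $(\ell,k,j)$ with $\ell\in L$ and $k\in K$. By the definitions (\ref{realGegindexproduct})--(\ref{realGegindexproductsubsets}) of $L\cdot K$, this support is exactly $L\cdot K$.

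Finally, I would conclude using the characterisation recorded immediately after (\ref{FCtele}): any univariate polynomial with nonnegative Gegenbauer coefficients is a potential for the space $P_M$, where $M$ is the set of positive indices in its Gegenbauer support. Applying this to $FG$ exhibits it as a potential for $P_{L\cdot K}$, as claimed. There is essentially no obstacle beyond quoting Rogers' formula; the whole content of the lemma is the positivity of the linearisation coefficients, visible from the explicit ratio of Pochhammer symbols in (\ref{Qkprods}). The only minor bookkeeping is the handling of the constant ($m=0$) contribution, which may arise from the $j=\min\{\ell,k\}$ terms when $\ell=k$ but does not affect whether $FG$ is a potential for the space $P_{L\cdot K}$.
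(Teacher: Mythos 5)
Your proposal is correct and is essentially the paper's own argument: the paper's proof is the one-line instruction to multiply out the Gegenbauer expansions of $F$ and $G$ and apply the Rogers linearisation formula (\ref{Qkprods}), exactly as you do. Your additional bookkeeping about the support being precisely $L\cdot K$ (via positivity of the linearisation coefficients) and the handling of the constant term is a faithful elaboration of the same route, not a different one.
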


\begin{proof} Multiply out the Gegenbauer expansions for $F$ and $G$
	and use (\ref{Qkprods}).
\end{proof}

We now present the analogue of Lemma \ref{Qkprodlemma}
for the product of complex Gegenbauer polynomials.
It can be shown (see \cite{R80}) that
\begin{equation}
\label{H(p,q)multrule}
H(p,q) H(r,s) \subset \sum_{j=0}^\mu H(p+r-j,q+s-j), \quad
\mu:=\min\{p,s\}+\min\{q,r\}, 
\end{equation}
where there is equality for $d\ge 3$.
Motivated by this, 
for indices $(p,q)$ and $(r,s)$, we define the operation
\begin{equation}
\label{gegindexproduct}
(p,q)\cdot (r,s) := \bigcup_{j=0}^\mu \{ (p+r-j,q+s-j)\}, \quad
\mu:=\min\{p,s\}+\min\{q,r\},
\end{equation}
which we extend to subsets of indices as in (\ref{realGegindexproductsubsets}).
As examples, we have
\begin{align*}
(p,q)\cdot(q,p) &= \{(0,0),(1,1),\ldots,(k,k)\}, \quad k=p+q,
\end{align*}
so that $(0,0)\in\cU\cdot\cU^\rev$, when $\cU$ is nonempty,
and
\begin{align*}
(p,q)\cdot(p,q) &= 
\{(2p,2q),(2p-1,2q-1),\ldots,(2p-2m,2q-2m)\}, \quad m=\min\{p,q\}. 
\end{align*}
In view of (\ref{H(p,q)multrule}) and (\ref{gegindexproduct}), 
it follows that
$$ P_L P_K \subset \sum_{(p,q)\in L\cdot K} H(p,q). $$
The corresponding analogue of the Roger's formula (\ref{Qkprods}) is
$$  Q_{pq}^{(d)} Q_{rs}^{(d)} = \sum_{j=0}^\mu c_{p+r-j,q+s-j}^{(d)} 
Q_{p+r-j,q+s-j}^{(d)}, \quad
\mu:=\min\{p,s\}+\min\{q,r\}, $$
with nonnegative coefficients (see \cite{CW18}).
The nonnegativity of the coefficients follows from the 
Schur product theorem.
This gives the complex analogue of Lemma \ref{Qkprodlemma}.

\begin{theorem} 
\label{Qpqprodtheorem}
The product of functions giving a potential
for the real or complex sphere
 gives 
a potential. Indeed, if $F$ and $G$ are give potentials for 
$P_\cU$ and $P_\cV$, then $FG$ gives a potential for 
a subspace of $P_{\cU\cdot\cV}$, which is all of $P_{\cU\cdot\cV}$
for the real sphere and the complex sphere when $d\ge3$.
\end{theorem}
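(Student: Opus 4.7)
The plan is to pass from $F$ and $G$ to their Gegenbauer expansions, multiply term by term, and invoke the product formulas already discussed in the paragraphs just above the theorem statement. Concretely, since $F$ and $G$ give potentials for $P_\cU$ and $P_\cV$ respectively, write
$$ F = \sum_{(p,q)\in\cU} f_{pq}\, Q_{pq}^{(d)}, \qquad G = \sum_{(r,s)\in\cV} g_{rs}\, Q_{rs}^{(d)}, $$
with $f_{pq},g_{rs}>0$; in the real case replace the double indices by the single index $k$ and use $Q_k^{(d)}$. Multiplying out gives
$$ FG = \sum_{(p,q)\in\cU}\,\sum_{(r,s)\in\cV} f_{pq}\,g_{rs}\, Q_{pq}^{(d)} Q_{rs}^{(d)}. $$

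Next, I apply Rogers' formula (\ref{Qkprods}) in the real case, and its complex analogue displayed just above the theorem (from \cite{CW18}), to expand each product $Q_{pq}^{(d)}Q_{rs}^{(d)}$ as a nonnegative combination of $Q_{p+r-j,\,q+s-j}^{(d)}$ for $0\le j\le \mu=\min\{p,s\}+\min\{q,r\}$. The nonnegativity here is where the Schur product theorem enters: the reproducing kernels $K_{pq}$ and $K_{rs}$ are positive definite functions on the sphere, their pointwise product is positive definite by Schur, and so its expansion in the Gegenbauer basis (an orthogonal basis of positive definite functions) must have nonnegative coefficients. Substituting back, $FG$ becomes a nonnegative combination of Gegenbauer polynomials whose indices lie in
$\bigcup_{(p,q)\in\cU,\,(r,s)\in\cV} (p,q)\cdot(r,s) = \cU\cdot\cV$, by the very definition (\ref{gegindexproduct})--(\ref{realGegindexproductsubsets}). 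This proves the first assertion: $FG$ gives a potential for some $P_{\cU'}$ with $\cU'\subset\cU\cdot\cV$.

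For the sharper claim that $\cU'=\cU\cdot\cV$ in the real case and in the complex case when $d\ge 3$, I fix an arbitrary index $(a,b)\in\cU\cdot\cV$ and argue that it appears in $FG$ with strictly positive coefficient. By definition there exist $(p,q)\in\cU$ and $(r,s)\in\cV$ with $(a,b)\in(p,q)\cdot(r,s)$, so $a=p+r-j$, $b=q+s-j$ for some $0\le j\le\mu$. The equality case of (\ref{H(p,q)multrule}), valid for $d\ge 3$ (and its real analogue which holds without restriction), says $H(p,q)H(r,s)=\bigoplus_j H(p+r-j,q+s-j)$, so in particular $H(a,b)$ is a nonzero summand of the pointwise product. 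Translating this via reproducing kernels, the coefficient of $Q_{ab}^{(d)}$ in $Q_{pq}^{(d)}Q_{rs}^{(d)}$ is strictly positive (a zero coefficient would force the Schur product of the kernels to be orthogonal to $H(a,b)$, contradicting the equality). Since $f_{pq}g_{rs}>0$ and every other contribution to the coefficient of $Q_{ab}^{(d)}$ in $FG$ is nonnegative, the total coefficient is positive, so $(a,b)\in\cU'$.

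The step I expect to require the most care is the final one: the passage from the algebraic identity $H(p,q)H(r,s)=\bigoplus_j H(p+r-j,q+s-j)$ to strict positivity of the corresponding Gegenbauer coefficient. The clean way is via the reproducing-kernel dictionary, noting that the reproducing kernel of a nonzero unitarily invariant subspace is a nonzero multiple of its canonical Gegenbauer polynomial; this forces a nonzero, hence (by the Schur argument) strictly positive, coefficient at that index.
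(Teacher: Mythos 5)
Your route is the same as the paper's. The paper obtains the real case as Lemma \ref{Qkprodlemma} (multiply out the Gegenbauer expansions and apply Rogers' formula (\ref{Qkprods})), and the complex case from the analogous linearization $Q_{pq}^{(d)}Q_{rs}^{(d)}=\sum_{j=0}^{\mu} c_j\, Q_{p+r-j,q+s-j}^{(d)}$ with nonnegative coefficients, which it quotes from \cite{CW18} and motivates by the Schur product theorem, exactly as you do. So your proof of the first assertion --- that $FG$ is a potential for a subspace of $P_{\cU\cdot\cV}$ --- matches the paper's argument.

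Where you go beyond the paper is in arguing that every index of $\cU\cdot\cV$ occurs with \emph{strictly} positive coefficient, and this final step has a gap. The quantity that must be nonzero is $\inpro{Q_{pq}Q_{rs},Q_{ab}}_\cgeg$, which measures the $H(a,b)$-component of the \emph{diagonal} sections $y\mapsto K_{pq}(x,y)K_{rs}(x,y)$ of the Schur product of the two kernels. The equality case of (\ref{H(p,q)multrule}) only says that the span of \emph{all} products $fg$ with $f\in H(p,q)$ and $g\in H(r,s)$ two independent functions meets $H(a,b)$; a priori this could be witnessed only by off-diagonal products $K_{pq}(\cdot,u)K_{rs}(\cdot,v)$ with $u\ne v$, while every diagonal section has vanishing $H(a,b)$-component. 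So ``a zero coefficient would contradict the equality'' does not follow formally from the product-of-spaces identity. The paper sidesteps this by appealing to the explicit linearization formula of \cite{CW18}, whose coefficients are computed and seen to be strictly positive for all $0\le j\le\mu$ when $d\ge3$ (and by the explicit positive coefficients in (\ref{Qkprods}) for the real sphere). To keep your abstract argument you would need an additional step identifying the linearization coefficient with a manifestly positive quantity (e.g.\ a squared norm of a projected product of zonal functions), rather than inferring positivity from the identity $H(p,q)H(r,s)=\bigoplus_j H(p+r-j,q+s-j)$ alone.
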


The following particular case 
will be useful.

\begin{corollary}
\label{Qpqprodthcorollary}
If $F=\sum_{(p,q)} f_{pq} Q_{pq}$, $f_{pq}\ge0$ is a potential for
the complex sphere, then 
$G=\overline{Q_{ab}} F/Q_{ab}(1)$
is a potential with constant term $g_0=\inpro{G,1}_\cgeg=f_{ab}$.
\end{corollary}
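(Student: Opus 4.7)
The plan is to handle the two claims (that $G$ is a potential and that its constant term equals $f_{ab}$) as essentially independent consequences of earlier results: the product theorem for Gegenbauer polynomials (Theorem \ref{Qpqprodtheorem}) and the orthogonality relation (\ref{complexGegorthog}).

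First I would rewrite the numerator of $G$ using the observation $\overline{Q_{ab}(z)} = Q_{ba}(z)$ already recorded in the paragraph containing (\ref{Hpqpots}). So $G = Q_{ba} F / Q_{ab}(1)$, and since $Q_{ab}(1) = \dim H(a,b) > 0$, this is a positive scalar multiple of $Q_{ba} F$. The polynomial $Q_{ba}$ is trivially a potential (with index set $\{(b,a)\}$), and $F$ is a potential by hypothesis, so Theorem \ref{Qpqprodtheorem} applied to $Q_{ba}$ and $F$ shows that $Q_{ba} F$ has a Gegenbauer expansion with nonnegative coefficients; dividing by the positive scalar $Q_{ab}(1)$ preserves this property, establishing that $G$ is a potential.

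Next I would compute the constant term. Write the Gegenbauer expansion $G = \sum_{(k,\ell)} g_{k\ell} Q_{k\ell}$. Since $Q_{00} \equiv 1$ and $Q_{00}(1)=1$, the orthogonality relation (\ref{complexGegorthog}) gives $\inpro{G,1}_\cgeg = \inpro{G,Q_{00}}_\cgeg = g_{00}$, so $g_{00}$ is precisely the quantity I want to identify with $f_{ab}$. Now I would expand
\begin{equation*}
\inpro{G,1}_\cgeg
= \frac{1}{Q_{ab}(1)} \sum_{(p,q)} f_{pq}\, \inpro{Q_{ba}\, Q_{pq},\, 1}_\cgeg,
\end{equation*}
and use $\overline{Q_{ab}(z)} = Q_{ba}(z)$ once more to rewrite each integral as
\begin{equation*}
\inpro{Q_{ba}Q_{pq},1}_\cgeg
= \frac{d-1}{\pi}\int_\DD Q_{pq}(z)\, Q_{ba}(z) (1-|z|^2)^{d-2}\, dA(z)
= \inpro{Q_{pq}, Q_{ab}}_\cgeg.
\end{equation*}
By (\ref{complexGegorthog}) this last inner product vanishes unless $(p,q)=(a,b)$, in which case it equals $Q_{ab}(1)$. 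Only the $(a,b)$ term survives, and the factor $Q_{ab}(1)$ cancels, leaving $\inpro{G,1}_\cgeg = f_{ab}$, as claimed.

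The proof is essentially bookkeeping with no real obstacle; the only place one has to be slightly careful is the conjugation step, where applying $\overline{Q_{ab}} = Q_{ba}$ turns the product $Q_{ba} Q_{pq}$ inside the weighted integral over $\DD$ into the inner product $\inpro{Q_{pq}, Q_{ab}}_\cgeg$, so that the orthogonality relation applies directly.
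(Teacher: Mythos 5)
Your proposal is correct and follows essentially the same route as the paper: the paper likewise deduces that $G$ is a potential from $\overline{Q_{ab}}=Q_{ba}$ together with the product theorem, and computes the constant term by moving $\overline{Q_{ab}}$ to the second slot of $\inpro{\cdot,\cdot}_\cgeg$ and invoking the orthogonality relation so that only the $(a,b)$ term survives. The only difference is presentational — you expand the sum term by term where the paper writes the identity $\inpro{\overline{Q_{ab}}F,1}_\cgeg=\inpro{F,Q_{ab}}_\cgeg$ in one step.
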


\begin{proof} Since $\overline{Q_{ab}}=Q_{ba}$, $G$ is a potential, 
and its constant term is
$$ g_0 
= \inpro{\overline{Q_{ab}} F/Q_{ab}(1),1}_\cgeg
={1\over Q_{ab}(1)} \inpro{ \sum_{(p,q)} f_{pq} Q_{pq}, Q_{ab}}_\cgeg
={f_{ab}\over Q_{ab}(1)} \inpro{Q_{ab}, Q_{ab}}_\cgeg
= f_{ab}, $$
as desired.
\end{proof}

\begin{example} 
\label{conjzFresult}
Since $\overline{z}=\overline{Q_{10}(z)}/Q_{10}(1)$, 
we have that if $F$ is a potential for the complex sphere, then $G(z)=\overline{z}F(z)$ is
a potential with constant term $g_0=f_{10}$.
\end{example}

\begin{remark}
It is shown in \cite{R80} (Theorem 12.5.10), 
that for $d=2$ the only time there is not equality
in (\ref{H(p,q)multrule}) is when $(p,q)=(r,s)$, in which case 
$$ H(p,q) H(r,s) = \sum_{j=0\atop j\, {\rm even}}^\mu H(p+r-j,q+s-j), \quad
\mu  
=2\min\{p,q\}.  $$
If the product $\cdot$ of (\ref{gegindexproduct}) is modified in this case to
$\cdot_2$, with
$$ (p,q)\cdot_2 (p,q) 
:= \bigcup_{j=0\atop j\, {\rm even}}^\mu \{ (2p-j,2q-j)\}
\subset (p,q)\cdot(p,q), \qquad
\mu:=2\min\{p,q\}, $$
then the product $FG$ of Theorem \ref{Qpqprodtheorem}, is a potential 
for (all of) $P_{\cU\cdot_2\cV}$.
\end{remark}

\section{Bounds for real and complex spherical designs}
\label{boundsection}

Here we consider the relationship between our results on potentials
and the seminal paper \cite{DGS77} on codes and spherical designs.
In particular, we seek to understand the given bounds
on the number of vectors in real spherical designs,
and then extend them in a natural way to complex spherical designs.

It is now convenient to allow the univariate polynomial $F$ giving a potential
for real or complex spherical designs $\Phi=(v_j)_{j=1}^n$ 
to have a (possibly nonzero) constant term $f_0$ in its Gegenbauer expansion,
so that the potential (with constant) satisfies
$$ n^2 A_F(\Phi) =\sum_j\sum_k F(\inpro{v_j,v_k})\ge n^2f_0, $$
with equality if and only if $(v_j)$ is a design.
A very specific way equality can be achieved is by having each nondiagonal term in the sum
be constant, i.e.,
\begin{equation}
\label{absolutetypeeqns}
F(\inpro{v_j,v_k})=c, \quad\forall j\ne k. 
\end{equation}
The value of the constant $c$ depends strongly on the choice of $F$, indeed
$$ n^2 A_F(\Phi)=nF(1)+(n^2-n)c=n^2 f_0 \Implies c={nf_0-F(1)\over n-1}. $$
In actuality, the number of equations in (\ref{absolutetypeeqns}) depends on
the number of {\bf angles} of $\Phi$, i.e., size of the set of inner products
$$ \Ang(\Phi):=\{\inpro{v_j,v_k}:v_j\ne v_k\}\subset [-1,1), $$
and the value of the potential depends on these angles and their multiplicities. i.e.,
\begin{equation}
\label{AFmalphaexp}
n^2 A_F(\Phi)=nF(1)+\sum_{\ga\in\Ang(\Phi)} m_\ga F(\ga) \ge n^2f_0, 
\end{equation}
where $m_\ga>0$ is the multiplicity of the angle $\ga$ as an entry
of the Gramian $[\inpro{v_j,v_k}]$.

We are now in a position to give a transparent statement and proof of
Theorem 4.3 of \cite{DGS77}. This is the key result which gives upper bounds
for the number of points in codes and designs, given a suitable choice of potential $F$.

Let $A\subset[-1,1)$. A finite set $X=(v_j)$ of $n$ unit vectors in $\Rd$ is an {\bf $A$-code} 
if its angles $\Ang(X)$ are contained in $A$.


\begin{theorem} 
\label{Acodebnd} (Upper bound)
Let $F=\sum_k f_k Q_k$ be a polynomial with
$f_k\ge0$, $f_0>0$, i.e., a potential with a positive constant $f_0$,
for which 
\begin{equation}
\label{Falphacdn}
F(\ga)\le0, \qquad \ga\in A\subset[-1,1).
\end{equation}
Then the size $n$ of any $A$-code $X$ satisfies
\begin{equation}
\label{DGSineq1}
n\le {F(1)\over f_0}, 
\end{equation}
with equality if and only if
the angles of $X$ are roots of $F$, and $X$ is a spherical $P$-design for 
$$ P= P_L= \bigoplus_{k\in L} \Harm_k(\SS), \quad
L=\{k:f_k>0\}. $$
\end{theorem}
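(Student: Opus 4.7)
The plan is to extract the bound by substituting the two hypotheses on $F$ into opposite sides of the identity (\ref{AFmalphaexp}): the Gegenbauer nonnegativity $f_k\ge 0$ yields a lower bound on the potential through the variational characterisation, while the sign condition $F(\alpha)\le 0$ on $A$ converts the $A$-code hypothesis into an upper bound on the off-diagonal contribution.

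First, split the diagonal and off-diagonal terms as in (\ref{AFmalphaexp}):
\[
n^{2} A_F(X) \;=\; n\,F(1) \;+\; \sum_{\alpha\in\Ang(X)} m_\alpha\,F(\alpha),
\]
where $m_\alpha>0$ counts the ordered pairs $(j,k)$, $j\ne k$, with $\inpro{v_j,v_k}=\alpha$. Since $F=\sum_k f_k Q_k$ has $f_k\ge 0$ and $f_0>0$, it is a potential with constant $f_0$ for $P_L$, $L=\{k\ge 1: f_k>0\}$, and Theorem \ref{varcharthm} gives $A_F(X)\ge f_0$, i.e., $n^2 A_F(X)\ge n^2 f_0$. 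On the other hand, because $X$ is an $A$-code, $\Ang(X)\subset A$, and hence by (\ref{Falphacdn}) we have $F(\alpha)\le 0$ for every $\alpha\in\Ang(X)$, so $\sum_\alpha m_\alpha F(\alpha)\le 0$. Chaining the two bounds,
\[
n^{2} f_0 \;\le\; n^{2} A_F(X) \;=\; nF(1)+\sum_\alpha m_\alpha F(\alpha) \;\le\; nF(1),
\]
and dividing by $n f_0>0$ gives (\ref{DGSineq1}).

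For the equality case, note that $n=F(1)/f_0$ forces both inequalities in the chain to be tight. Equality $A_F(X)=f_0$ is exactly the variational condition that $X$ be a (constant-weight) spherical $P_L$-design, with $L=\{k:f_k>0\}$. Equality $\sum_\alpha m_\alpha F(\alpha)=0$, together with $m_\alpha>0$ and $F(\alpha)\le 0$ on $\Ang(X)$, forces $F(\alpha)=0$ for every $\alpha\in\Ang(X)$; that is, every angle of $X$ is a root of $F$. The converse is immediate from the same chain. There is no real obstacle here beyond bookkeeping: the one subtle point is that $f_0$ plays the role of $\int_\SS\!\int_\SS F(\inpro{x,y})\,d\gs(x)\,d\gs(y)$---the value a design must reproduce---so that the correct lower bound coming from the potential is $A_F(X)\ge f_0$ rather than $\ge 0$.
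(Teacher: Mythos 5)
Your proof is correct and follows essentially the same route as the paper: substitute the sign condition $F(\ga)\le0$ into the expansion $n^2A_F(X)=nF(1)+\sum_\ga m_\ga F(\ga)$, chain it with the variational lower bound $n^2A_F(X)\ge n^2f_0$, and read off the equality conditions. The only difference is that you spell out the two directions of the equality case and the role of $f_0$ as the integrated value more explicitly than the paper does, which is fine.
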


\begin{proof} Suppose that $X$ is an $A$-code. Then applying (\ref{Falphacdn}) to
(\ref{AFmalphaexp}) gives
$$ n^2f_0 \le n^2 A_F(\Phi)=nF(1)+\sum_{\ga\in\Ang(\Phi)} m_\ga F(\ga) \le nF(1), $$
which gives the inequality (\ref{DGSineq1}). There is equality above when $F(\ga)=0$, $\ga\in\Ang(X)$,
and $A_F(\Phi)=f_0$, i.e., $X$ is a spherical design for the potential $F$,
i.e., for $P=P_L$.
\end{proof}

The weighted version of (\ref{AFmalphaexp}) is
$$ A_{F}(\Phi)
 = F(1)\Bigl(\sum_{j}w_j^2\Bigr)
+ \sum_{\ga\in A} F(\ga) 
\Bigl(\sum_{j,k\atop\inpro{v_j,v_k}=\ga} w_jw_k\Bigr) \ge f_0, $$
which allows Theorem \ref{Acodebnd} to be generalised, with (\ref{DGSineq1}) becoming
$$ n \le {F(0)\over f_0} \bigl(n\sum_{j}w_j^2\bigr). $$

Before giving examples of Theorem \ref{Acodebnd}, 
we consider the corresponding lower  
bound on $n$ given by Theorem 5.10 of \cite{DGS77},
which can be established using a similar method, where $F$ is a difference of potentials.
For $F=\sum_k f_k Q_k$ a univariate polynomial,
with Gegenbauer coefficients $f_k\in\RR$, we call
$$ F=f_0+F^+-F^-,
\qquad F^+:=\sum_{k\ne 0\atop f_k>0} f_k Q_k, \quad
F^-:= -\sum_{k\ne0\atop f_k<0} f_k Q_k, $$
its decomposition into potentials. We have the following version of (\ref{AFmalphaexp})
\begin{equation}
\label{AFmalphaexpgeneral}
\sum_j\sum_k F(\inpro{v_j,v_k}) = n^2 f_0 + n^2 A_{F^+}(\Phi)-n^2 A_{F^-}(\Phi)
= nF(1)+\sum_{\ga\in\Ang(\Phi)} m_\ga F(\ga).
\end{equation}

\begin{theorem}
\label{Rdlowerest}
(Lower bound)
Let 
$P=P_L$ be a unitarily invariant space of polynomials,
and $F=\sum_k f_k Q_k$ a be a univariate polynomial with
$$ \{k:f_k>0\} = L\cup\{0\}. \qquad F(\ga)\ge0, \quad \forall\ga\in[-1,1]. $$
Then the size $n$ of any $P$-design $\Phi$ satisfies
\begin{equation}
\label{DGSineq2}
n\ge {F(1)\over f_0}, 
\end{equation}
with equality if and only if
the angles of $\Phi$ are roots of $F$, and $\Phi$ is a spherical 
$P_K$-design for 
$K=\{k:f_k<0\}$.
\end{theorem}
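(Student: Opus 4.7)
The plan is to exploit the decomposition $F=f_0+F^+-F^-$ just introduced before the theorem, together with the master identity (\ref{AFmalphaexpgeneral}), in direct analogy with the proof of Theorem \ref{Acodebnd}. The essential observation is that the hypotheses on the sign pattern of the Gegenbauer coefficients of $F$ are arranged exactly so that $F^+$ is a potential for $P=P_L$ (with zero constant term), while $F^-$ is a potential for $P_K$ with $K=\{k:f_k<0\}$. The sign of the inequality is then determined by the pointwise condition $F(\alpha)\ge0$ on $[-1,1]$.

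First I would write out (\ref{AFmalphaexpgeneral}) for the given $\Phi$ and use the fact that $\Phi$ is a $P$-design: since $F^+$ is a potential for $P_L=P$ whose constant term is zero, the variational characterisation in Theorem \ref{varcharthm} forces $A_{F^+}(\Phi)=0$. The identity therefore collapses to
\begin{equation*}
n^2 f_0 - n^2 A_{F^-}(\Phi) \;=\; nF(1)+\sum_{\alpha\in\Ang(\Phi)} m_\alpha F(\alpha).
\end{equation*}
Next I would bound the right-hand side from below by $nF(1)$, using the hypothesis $F(\alpha)\ge0$ on $[-1,1]\supset\Ang(\Phi)$ and the positivity of the multiplicities $m_\alpha$. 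On the left-hand side, $A_{F^-}(\Phi)\ge0$ because $F^-$ is itself a potential (by the definition of the decomposition). Dropping the nonnegative term $n^2A_{F^-}(\Phi)$ yields $n^2 f_0\ge nF(1)$, and dividing by $n>0$ (the design is nontrivial since it must integrate the constants) gives the desired lower bound (\ref{DGSineq2}).

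For the equality case, I would trace backward through the chain. Equality in $n^2f_0\ge nF(1)+n^2A_{F^-}(\Phi)$ forces both $A_{F^-}(\Phi)=0$ and $\sum_\alpha m_\alpha F(\alpha)=0$. Since each $m_\alpha>0$ and $F(\alpha)\ge0$, the latter demands $F(\alpha)=0$ for every $\alpha\in\Ang(\Phi)$, i.e., the inner products of distinct vectors are roots of $F$. The former, by the variational characterisation applied to $F^-$ (whose Gegenbauer support is $K$), is equivalent to $\Phi$ being a $P_K$-design. Conversely, if $\Phi$ is both a $P$-design and a $P_K$-design with $F$ vanishing on $\Ang(\Phi)$, all inequalities in the chain become equalities, giving the converse.

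The only nonroutine point is the book-keeping of which potential vanishes for which reason: the vanishing $A_{F^+}(\Phi)=0$ is input (the $P$-design hypothesis), whereas $A_{F^-}(\Phi)=0$ is output (part of the equality conclusion). Everything else is a direct application of Theorem \ref{varcharthm} and the decomposition $F=f_0+F^+-F^-$, so I expect no genuine obstacle beyond keeping these two roles of the variational characterisation distinct.
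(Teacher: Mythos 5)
Your proof is correct and follows essentially the same route as the paper's: the decomposition $F=f_0+F^+-F^-$, the vanishing of $A_{F^+}(\Phi)$ from the $P$-design hypothesis, the pointwise bound $F(\ga)\ge0$ on the angle sum, and the nonnegativity of $A_{F^-}(\Phi)$ combine in exactly the order the paper uses. The equality analysis (angles as roots of $F$, and $A_{F^-}(\Phi)=0$ equivalent to being a $P_K$-design) also matches the paper's argument.
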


\begin{proof} By assumption, $F^+$ is a potential for $P=P_L$, and so $A_{F^+}(\Phi)=0$.
Thus (\ref{AFmalphaexpgeneral}) reduces to
$$ n^2 f_0 -n^2 A_{F^-}(\Phi) -nF(1) = \sum_{\ga\in\Ang(\Phi)} m_\ga F(\ga). $$
Since $F(\ga)\ge0$, we obtain the inequality
$$ n^2 f_0 -nF(1) \ge n^2 A_{F^-}(\Phi), $$
with equality when $F(\ga)=0$, $\ga\in\Ang(\Phi)$.
Since $F^{-}$ is a potential for $P_K$ and $f_0>0$, we have
$$ n^2 f_0 -nF(1) \ge n^2 A_{F^-}(\Phi)\ge 0
\Implies n\ge {F(1)\over f_0}, $$
which gives (\ref{DGSineq2}). Moreover, there is equality above when 
$A_{F^-}(\Phi)=0$, i.e., when $\Phi$ is a spherical $P_K$-design.  
\end{proof}

The original statement of Theorem \ref{Rdlowerest} in \cite{DGS77} (Theorem 5.10) was
for spherical $t$-designs, i.e., $L=\{0,1,2,\ldots,t\}$
(see Example \ref{t-designF}).
The weighted version of this result 
can be obtained, in the obvious way,
giving the lower bound
$$ n \ge {F(0)\over f_0} \bigl(n\sum_{j}w_j^2\bigr). $$

The easiest way to find an $F$ satisfying $F(x)\ge0$ on $[-1,1]$, which 
we will refer to as a ``nonnegative potential'', is to take the
square of an appropriate univariate polynomial.

\begin{corollary}
\label{RdlowerestCor1}
Let $E\subset\NN$ be a nonempty finite set of indices, 
and $L=E\cdot E$.
Then 
\begin{equation}
\label{RdlowerestCor1F}
F:= \Bigl(\sum_{k\in E} Q_k\Bigr)^2,
\end{equation}
gives a nonnegative potential for $P_L$, and the number of points $n$ in a 
$P_L$-design satisfies
\begin{equation}
\label{RdlowerestCor1nest}
n\ge {F(1)\over f_0}
= \sum_{k\in E} Q_k(1) = \sum_{k\in E}\dim(\Harm_k(\Rd)). 
\end{equation}
\end{corollary}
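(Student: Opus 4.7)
My plan is to verify that $F$ satisfies the hypotheses of Theorem \ref{Rdlowerest} and then to evaluate the ratio $F(1)/f_0$ explicitly. The nonnegativity $F(x)\ge 0$ on $[-1,1]$ is automatic, since $F$ is the square of a real-valued polynomial. To see that $F$ is a potential for $P_L$ in the required sense, I would first note that $G:=\sum_{k\in E}Q_k$ is a potential whose Gegenbauer coefficients are all $1>0$, with index set $E$. Applying Lemma \ref{Qkprodlemma} to $G\cdot G$, the product $F=G^2$ is a potential, and the strict positivity of the Rogers coefficients in (\ref{Qkprods}) guarantees that its Gegenbauer expansion $F=\sum_m f_m Q_m$ has $f_m>0$ precisely for $m\in E\cdot E=L$. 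Since $0\in k\cdot k$ for every $k\in E$ (by (\ref{realGegindexproduct})), we have $0\in L$, so the condition $\{m:f_m>0\}=L\cup\{0\}$ of Theorem \ref{Rdlowerest} is met, with $f_0>0$.

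Invoking Theorem \ref{Rdlowerest} then delivers $n\ge F(1)/f_0$ for any $P_L$-design of size $n$. The numerator is immediate: $F(1)=\bigl(\sum_{k\in E}Q_k(1)\bigr)^2$. For the denominator I would appeal to the Gegenbauer orthogonality (\ref{realGegorthog}): since $Q_0=1$, we have $f_0=\inpro{F,Q_0}_\geg=\inpro{F,1}_\geg$, and expanding the square yields
\[
f_0=\sum_{k,l\in E}\inpro{Q_k,Q_l}_\geg=\sum_{k\in E}Q_k(1),
\]
because $\inpro{Q_k,Q_l}_\geg=Q_k(1)\,\delta_{k,l}$. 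Dividing, $F(1)/f_0=\sum_{k\in E}Q_k(1)$, and recalling from (\ref{Qkdefn}) that $Q_k(1)=\dim\Harm_k(\Rd)$ gives exactly (\ref{RdlowerestCor1nest}).

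The only step that requires genuine care is confirming that no cancellations can shrink the support of $F$ below $L$; this is precisely where the strict positivity of the Rogers coefficients does the work, and once it is established the bound reduces to one substitution and a single application of the orthogonality relation. An alternative to the orthogonality computation would be to extract $f_0$ from the Rogers expansion of each $Q_k^2$ directly—only the diagonal terms $k=l$ contribute a constant, and only at $j=k$—but this is needlessly laborious compared with integrating against the Gegenbauer weight.
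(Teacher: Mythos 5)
Your proof is correct and follows essentially the same route as the paper: nonnegativity from the square, Lemma \ref{Qkprodlemma} (with the positivity of the Rogers coefficients) to identify $F$ as a potential for $P_{E\cdot E}$, and the Gegenbauer orthogonality relation (\ref{realGegorthog}) to compute $f_0=\sum_{k\in E}Q_k(1)$ before applying Theorem \ref{Rdlowerest}. Your extra remark about ruling out cancellation in the support of $F$ is a point the paper passes over silently, but it is the same argument.
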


\begin{proof} Clearly, $F$ is nonnegative, and by Lemma \ref{Qkprodlemma}, it is
a potential for $L=E\cdot E$. Thus, we may apply Theorem \ref{Rdlowerest}.
By the orthogonality relations (\ref{realGegorthog}), we have
$$ f_0 =\inpro{F,1}_\geg
= \inpro{\Bigl(\sum_{k\in E} Q_k\Bigr)^2,1}_\geg
= \inpro{\sum_{k\in E} Q_k,\sum_{\ell\in E} Q_\ell}_\geg
= \sum_{k\in E} \inpro{Q_k,Q_k}_\geg
= \sum_{k\in E} Q_k(1), $$
so that 
$$ {F(1)\over f_0}
= \sum_{k\in E} Q_k(1) = \sum_{k\in E}\dim(\Harm_k(\Rd)), $$
and we obtain the desired estimate.
\end{proof}

\begin{corollary}
\label{RdlowerestCor2}
Let $E\subset\NN$ be a finite set of even indices or of odd indices, and 
\begin{equation}
\label{RdlowerestCor2F}
F:= \Bigl({Q_1\over d}+1\Bigr) \Bigl(\sum_{k\in E} Q_k\Bigr)^2, \qquad
L=\{0,1\}\cdot(E\cdot E).
\end{equation}
Then $F$ gives a nonnegative potential for $P_L$, and the number of points $n$ in a
$P_L$-design satisfies
\begin{equation}
\label{RdlowerestCor2nest}
n\ge {F(1)\over f_0}
= 2 \sum_{k\in E} Q_k(1) = 2 \sum_{k\in E}\dim(\Harm_k(\Rd)).
\end{equation}
\end{corollary}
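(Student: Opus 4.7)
The plan is to verify the two hypotheses of Theorem \ref{Rdlowerest} for the polynomial $F$ in (\ref{RdlowerestCor2F}) and then read off the bound (\ref{RdlowerestCor2nest}) as $F(1)/f_0$. Nonnegativity on $[-1,1]$ is immediate: since $Q_1(x)=dx$, the first factor is $x+1\ge 0$ on $[-1,1]$, and the second factor is a square. That $F$ is a potential for $P_L$ with $L=\{0,1\}\cdot(E\cdot E)$ follows from Lemma~\ref{Qkprodlemma} applied to the two factors: $Q_1/d+1=Q_0+Q_1/d$ is a potential with index set $\{0,1\}$, and by the argument of Corollary~\ref{RdlowerestCor1} the square $(\sum_{k\in E}Q_k)^2$ is a potential with index set $E\cdot E$.

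The only nontrivial step is to compute the constant term $f_0$ of the Gegenbauer expansion of $F$. Here I would exploit the parity hypothesis on $E$. By Rogers' formula (\ref{Qkprods}), the expansion of $Q_jQ_k$ involves only $Q_{j+k-2\ell}$, all of the same parity as $j+k$. Since $E$ consists entirely of even or entirely of odd indices, every pair $(j,k)\in E\times E$ has $j+k$ even, so
$$G:=\Bigl(\sum_{k\in E} Q_k\Bigr)^2=\sum_{j,k\in E}Q_jQ_k$$
expands in even-indexed Gegenbauer polynomials only. Multiplying by $Q_1$ shifts parities (again by (\ref{Qkprods})), so $Q_1 G$ has only odd-indexed expansion and in particular contributes nothing to the constant term. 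Therefore $f_0=\inpro{F,1}_\geg=\inpro{G,1}_\geg$, and by the orthogonality relation (\ref{realGegorthog}),
$$f_0=\Bigl\langle\sum_{k\in E}Q_k,\sum_{\ell\in E}Q_\ell\Bigr\rangle_\geg=\sum_{k\in E}Q_k(1).$$

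Finally, using $Q_1(1)=d$ and $Q_0\equiv 1$, we compute
$$F(1)=\Bigl({Q_1(1)\over d}+1\Bigr)\Bigl(\sum_{k\in E}Q_k(1)\Bigr)^2=2\Bigl(\sum_{k\in E}Q_k(1)\Bigr)^2,$$
so Theorem~\ref{Rdlowerest} gives
$$n\ge{F(1)\over f_0}=2\sum_{k\in E}Q_k(1)=2\sum_{k\in E}\dim(\Harm_k(\Rd)),$$
which is (\ref{RdlowerestCor2nest}). The only place where real work happens is the parity argument showing that the cross term $(Q_1/d)G$ does not contribute to $f_0$; this is what forces the hypothesis that $E$ be homogeneous in parity and is responsible for the factor of $2$ (rather than $1$) in the bound compared with Corollary~\ref{RdlowerestCor1}.
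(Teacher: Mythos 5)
Your proposal is correct and follows essentially the same route as the paper: verify nonnegativity from the factor $x+1\ge0$ and the square, identify $F$ as a potential for $P_L$ via Lemma \ref{Qkprodlemma}, use the parity hypothesis on $E$ to show the cross term $Q_1\bigl(\sum_{k\in E}Q_k\bigr)^2$ contributes nothing to $f_0$, and then apply Theorem \ref{Rdlowerest}. The only cosmetic difference is that the paper dispatches the parity step by observing that $Q_1\bigl(\sum_{k\in E}Q_k\bigr)^2$ is an odd function and so integrates to zero against the symmetric Gegenbauer weight, whereas you argue via the parity of the indices in the Rogers expansion; these are equivalent.
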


\begin{proof} The proof is similar to that of Corollary \ref{RdlowerestCor1}.
The first factor of $F$ satisfies 
$$ {Q_1(x)\over d}+1=x+1\ge0, \qquad x\in[-1,1], $$
and so $F$ gives a nonnegative potential for $L$.
The polynomial $Q_1(\sum_k Q_k)^2$ is odd, so has zero integral with
respect to the Gegenbauer weight, and we have
$$ f_0 
=\inpro{F,1}_\geg
= \inpro{\Bigl({Q_1\over d}+1\Bigr)\Bigl(\sum_{k\in E} Q_k\Bigr)^2,1}_\geg
= \inpro{\Bigl(\sum_{k\in E} Q_k\Bigr)^2,1}_\geg
= \sum_{k\in E} Q_k(1)>0. $$
Further,
$$ F(1)= 2 \Bigl(\sum_{k\in E} Q_k(1)\Bigr)^2, $$
and so we obtain the desired estimate from Theorem \ref{Rdlowerest}.
\end{proof}

We observe that the choice of the polynomial $\sum_{k\in E} Q_k$ in 
Corollaries \ref{RdlowerestCor1} and \ref{RdlowerestCor2} is optimal. 
Indeed, if a different convex combination
$\sum_{k}c_k Q_k$ is taken in the potential $F$, 
then term 
$\sum_k Q_k(1)$ in the lower estimate
becomes
$$ M = { \Bigr(\sum_{k}c_k Q_k(1)\Bigr)^2 \over \sum_{k}c_k^2 Q_k(1) }. $$
By Cauchy-Schwarz, we have 
$$ \Bigl(\sum_k c_k Q_k(1)\Bigr)^2
= \Bigl(\sum_k c_k\sqrt{Q_k(1)}\sqrt{Q_k(1)}\Bigr)^2
\le\Bigl( \sum_k c_k^2Q_k(1)\Bigr) \Bigl( \sum_k Q_k(1)\Bigr), $$
so that 
$$ M\le \sum_k Q_k(1), $$
with equality if and only if $c_k=1$, $\forall k$.

Spherical designs which give equality in one of the bounds of
Theorems \ref{Acodebnd} and \ref{Rdlowerest}, are said to be {\bf tight} (not to be confused with tight frames).
Such designs are very special, and have played a prominent role in the theory of spherical 
designs: since the angles of tight designs are roots of $F$, it is possible to investigate
their existence. 

This following example is Theorems 5.11 and 5.12 of \cite{DGS77}.

\begin{example} 
\label{t-designF} (Spherical $t$-designs) These are given by $L=\{0,1,2,\ldots,t\}$, which
can be obtained by the following choices (for $t$ even and odd). 
\begin{align*}
 & t=2e:   \quad\qquad L=E\cdot E, \quad E=\{0,1,2,\ldots,e\}, \cr
 & t=2e+1:   \qquad L=\{0,1\}\cdot(E\cdot E), \quad E=\{e,e-2,e-4,\ldots\}.
\end{align*}
The corresponding estimates (\ref{RdlowerestCor1nest}) and (\ref{RdlowerestCor2nest}) are
$$ n\ge \sum_{k=0}^e\dim\bigl(\Harm_k(\Rd)\bigr) 
= \dim(\Pi_e(\Rd)|_\SS) = {e+d-1\choose d-1}+{e+d-2\choose d-1}, \quad t=2e, $$
$$ n \ge 2 \sum_{0\le j\le e/2} \dim\bigl(\Harm_{e-2j}(\Rd)\bigr)
= 2\dim(\Hom_e(\Rd)|_\SS) 
= 2{ e+d-1\choose d-1}, \quad t=2e+1. $$
In \cite{BRV13}, it is shown that there exist spherical $t$-designs
whose number of points has this order of growth in $t$, i.e., 
with $n\ge c_d t^{d-1}$.
\end{example}

\begin{example} (Spherical half-designs) The spherical half-designs of even
order $m=2t$ are given by 
$$ L = \{2t,2t-2,2t-4,\ldots\}=E\cdot E, \quad E=\{t,t-2,t-4,\ldots\}. $$
By applying Corollary \ref{RdlowerestCor1}, we obtain the estimate
$$ n \ge \sum_{0\le j\le t/2} \dim\bigl(\Harm_{t-2j}(\Rd)\bigr)
= \dim(\Hom_t(\Rd)|_\SS) 
= { t+d-1\choose d-1}. $$
In \cite{DGS77}, it is shown that for $t=2m+1$ odd, a tight spherical $t$-design 
consists the vectors of a tight spherical-half design of order $2m$, and its negatives.
\end{example}

A second natural way to try and find suitable potentials, 
is to optimise over all possible such potentials. This is the ``linear programming method''
of \cite{DGS77}, e.g., from Theorem \ref{Rdlowerest}, 
one has the lower bound for $P_L$-designs
$$ n \ge \max\Bigl\{ {F(1)\over f_0}: F=\sum_k f_k Q_k, \{k:f_k>0\}=L\cup\{0\}, 
\hbox{$F\ge0$ on $[-1,1]$}\Bigr\},  $$
and the following example.

\begin{example} (Spherical designs of harmonic index $t$) 
A polynomial $F$ giving potential for the spherical designs for 
$P_L=\Harm_t(\Rd)$ (and no larger space), 
to which we can apply Theorem \ref{Rdlowerest}, 
has the form $F=Q_t+c$, for $c\ge b:=-\min_{x\in[-1,1]}Q_k(x)>0$.
The corresponding estimate $n\ge F(1)/f_0 = Q_t(1)/c+1$ is optimised
by taking $c=b$. This estimate, and variants of it, can be found in
\cite{ZBBJY17}.
\end{example}

The linear programming method has recently been applied to real spherical $(t,t)$-designs
\cite{B20},
\cite{BBDHSSb}, \cite{BBDHSS25a} (weighted designs).

We now consider bounds for complex designs.
As before, we will take 
polynomials
of the form 
$$ F=\sum_{(p,q)} f_{pq} Q_{pq}, \qquad f_{pq}\in\RR. $$
These are polynomials of a complex variable with real coefficients,
and so take complex values in general (unless $f_{pq}=f_{qp}$, $\forall (p,q)$). 
Nevertheless, they do have $F(1),f_0\in\RR$,
and, most importantly,
\begin{equation}
\label{complexpotentialrealexp}
\sum_{\ga\in A} F(\ga) m_\ga = \sum_{\ga\in A} \Re(F(\ga)) m_\ga.
\end{equation}
The last equation follows since $\overline{F(z)}=F(\overline{z})$,
and an angle $\ga$ and 
$\overline{\ga}$ appear with the same multiplicity $m_\ga=m_{\overline{\ga}}$
in (\ref{AFmalphaexp}), 
so the sum of the corresponding pair of 
terms is
\begin{align*}
m_\ga F(\ga) +  m_{\overline{\ga}} F(\overline{\ga})
&= m_\ga F(\ga) + m_\ga \overline{F(\ga)}
= 2 m_\ga \Re(F(\ga)) \cr
&= m_\ga \Re(F(\ga)) + m_{\overline{\ga}} \Re(\overline{F(\ga)})
= m_\ga \Re(F(\ga)) + m_{\overline{\ga}} \Re(F(\overline{\ga})). 
\end{align*}

In view of (\ref{complexpotentialrealexp}), the extension of 
Theorems \ref{Acodebnd} and \ref{Rdlowerest} to the complex case become obvious,
and we combine them.




\begin{theorem} 
\label{complexupperlowerbnds}
(Upper and Lower bounds)
Let $F=\sum_{(p,q)} f_{pq} Q_{pq}$ be a polynomial with 
$$ f_{pq}\in\RR, \qquad f_0=f_{00}>0, \qquad \tau=\tau^+:=\{(p,q):f_{pq}>0\},
\quad \tau^-:=\{(p,q):f_{pq}<0\}, $$
and $A\subset\{z\in\CC:|z|\le1,z\ne1\}$. 
Then
\begin{enumerate}[(i)]
\item If $F$ is a potential, i.e., $\tau^-=\{\}$, and 
$\Re(F(\ga))\le0$, $\forall\ga\in A$, then the size $n$ of any $A$-code $X$ satisfies
$$ n\le {F(1)\over f_{0}}, $$
with equality if and only if the angles of $X$ are roots of $F$, and $X$ is a 
$\tau$-design 
\item If $\Re(F(\ga))\ge0$, $\ga\in\{z\in\CC:|z|\le 1\}$, then
the size $n$ of any spherical $\tau$-design $X$ satisfies
$$ n\ge {F(1)\over f_{0}}, $$
with equality if and only if the angles of $X$ are roots of $F$, 
and $X$ is a $\tau^-$-design.
\end{enumerate}
\end{theorem}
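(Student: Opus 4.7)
The plan is to mirror the proofs of Theorems \ref{Acodebnd} and \ref{Rdlowerest} in the real case, with the single new ingredient being the conjugate-symmetry observation (\ref{complexpotentialrealexp}), which keeps all the inequalities real-valued despite $F$ now taking complex values. The starting point is the decomposition $F = f_0 + F^+ - F^-$ into potentials (as in the real case, but with the indices now pairs $(p,q)$), which yields the analogue of (\ref{AFmalphaexpgeneral}):
\begin{equation*}
n^2 f_0 + n^2 A_{F^+}(\Phi) - n^2 A_{F^-}(\Phi) \;=\; nF(1) + \sum_{\alpha\in\Ang(\Phi)} m_\alpha F(\alpha).
\end{equation*}
Because the Gramian $[\inpro{v_j,v_k}]$ is Hermitian, every angle $\alpha\in\Ang(\Phi)$ is accompanied by $\overline{\alpha}$ with equal multiplicity $m_\alpha = m_{\overline{\alpha}}$; combined with $\overline{F(z)}=F(\overline{z})$ (which holds since the Gegenbauer coefficients $f_{pq}$ are real), this forces $\sum_\alpha m_\alpha F(\alpha) = \sum_\alpha m_\alpha \Re(F(\alpha))$, exactly as in (\ref{complexpotentialrealexp}). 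Thus the entire identity is a real-number identity, and the standard inequality arguments transfer verbatim.

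For part (i), where $\tau^- = \emptyset$, I have $A_{F^-}(\Phi)=0$ and $A_{F^+}(\Phi)\ge 0$, so the identity reduces to
\begin{equation*}
nF(1) + \sum_{\alpha\in\Ang(\Phi)} m_\alpha \Re(F(\alpha)) \;\ge\; n^2 f_0.
\end{equation*}
By hypothesis $\Re(F(\alpha))\le 0$ for $\alpha\in A$, and for an $A$-code $X$ every angle lies in $A$, so the sum on the left is $\le 0$, yielding $nF(1)\ge n^2 f_0$, i.e., $n\le F(1)/f_0$. Equality forces $m_\alpha \Re(F(\alpha))=0$ for each angle (so the angles are roots of $F$, using that $F(\alpha)=0$ iff $\Re F(\alpha)=0$ and $\Re F(\overline{\alpha})=0$ by conjugate pairing) and $A_{F^+}(\Phi)=0$, meaning $X$ is a $\tau^+$-design, i.e., a $\tau$-design.

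For part (ii), I exploit that $F^+$ is a potential for $P_\tau$ with $\tau=\tau^+$, so any $\tau$-design $\Phi$ kills it: $A_{F^+}(\Phi)=0$. The identity becomes
\begin{equation*}
n^2 f_0 \;=\; nF(1) + \sum_{\alpha\in\Ang(\Phi)} m_\alpha \Re(F(\alpha)) + n^2 A_{F^-}(\Phi),
\end{equation*}
and both the sum (by $\Re(F(\alpha))\ge 0$ on the closed unit disc) and $A_{F^-}(\Phi)$ (being a potential) are $\ge 0$. Dropping these terms yields $n^2 f_0 \ge nF(1)$, i.e., $n\ge F(1)/f_0$. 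Equality requires both $A_{F^-}(\Phi)=0$, which says $\Phi$ is a $\tau^-$-design, and $\Re(F(\alpha))=0$ at each angle, which together with the conjugate pairing forces the angles to be roots of $F$.

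The only genuine subtlety, and the step that needs the most care, is the justification of (\ref{complexpotentialrealexp}) in its weighted/multiplicity form: one must verify that the Hermitian symmetry of the Gramian really does pair every nonreal angle with its conjugate with matching multiplicity, so that the imaginary parts cancel term-by-term. Once that is in hand, the rest is purely a transcription of the linear-programming argument of \cite{DGS77}, with no new analytic content required beyond what is already established for the real case.
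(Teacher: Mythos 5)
Your proof follows exactly the route the paper intends: the paper gives no written proof of Theorem \ref{complexupperlowerbnds}, saying only that ``in view of (\ref{complexpotentialrealexp}), the extension of Theorems \ref{Acodebnd} and \ref{Rdlowerest} to the complex case become obvious,'' and your argument is precisely that extension --- the decomposition $F=f_0+F^+-F^-$, the identity (\ref{AFmalphaexpgeneral}), and the conjugate-pairing observation to keep everything real.

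One step deserves a caveat. In the equality analysis you obtain $\Re(F(\ga))=0$ at every angle and then assert the angles are roots of $F$, ``using that $F(\ga)=0$ iff $\Re F(\ga)=0$ and $\Re F(\overline{\ga})=0$ by conjugate pairing.'' That justification is vacuous: since $\overline{F(z)}=F(\overline{z})$, one has $\Re F(\overline{\ga})=\Re F(\ga)$ automatically, so your two conditions are a single condition and say nothing about $\Im F(\ga)$. What the argument actually delivers is that the angles are roots of $\Re F$, not of $F$; the two coincide in all of the paper's applications (where the relevant $F$ is real-valued, e.g.\ $F=|Q_\cE|^2$), and the same imprecision is already latent in the theorem statement itself, so this is a flaw in your stated reason rather than in the overall strategy --- but the parenthetical as written is not a proof of the claim.
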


This result can be found in \cite{RS14}, but with $F(\ga)$ in place of
our $\Re(F(\ga))$, as the subtlety that $F(\ga)$ can be complex was not considered.
For a finite set of indices $\cE$, 
\begin{equation}
\label{QEdefn}
Q_\cE:=\sum_{(p,q)\in\cE} Q_{pq}.
\end{equation}
As for real designs, we will say that a complex spherical design $X$ is {\bf tight} 
if it meets one of the bounds of Theorem \ref{complexupperlowerbnds}, 
i.e., 
$$n=|X|={F(1)\over f_0}, \qquad \hbox{the angles of $X$ are roots of $F$}. $$

\begin{corollary}
\label{CdlowerestCor1}
Let $\cE\subset\NN^2$ be a nonempty finite set of indices. 
Then
\begin{equation}
\label{CdlowerestCor1F}
F:= 
|Q_\cE|^2
= \Bigl(\sum_{(p,q)\in\cE} Q_{pq}\Bigr) \Bigl(\sum_{(r,s)\in \cE^\rev} Q_{rs}\Bigr)\ge0,
\end{equation}
is a 
potential for $P_{\cE\cdot\cE^\rev}$, and the number of points $n$ in a
$(\cE\cdot\cE^\rev)$-design satisfies 
\begin{equation}
\label{CdlowerestCor1nest}
n \ge    
\dim(P_\cE) 
= \sum_{(p,q)\in\cE} \dim\bigl(H(p,q)\bigr),
\end{equation}
with equality if and only if the angles of the design are roots of the polynomial
$Q_\cE$.
\end{corollary}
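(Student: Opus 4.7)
The plan is to apply part (ii) of Theorem \ref{complexupperlowerbnds} to the nonnegative polynomial $F = |Q_\cE|^2$, computing its constant term $f_0$ and its value at $1$ via the orthogonality relations (\ref{complexGegorthog}); this is the complex counterpart of the argument proving Corollary \ref{RdlowerestCor1}.

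First I would verify the claimed factorization and the nonnegativity of $F$. Because $\overline{Q_{pq}(z)} = Q_{qp}(z)$, one has $\overline{Q_\cE(z)} = \sum_{(r,s)\in\cE^\rev} Q_{rs}(z)$, so that
$$F(z) = Q_\cE(z)\,\overline{Q_\cE(z)} = |Q_\cE(z)|^2 \ge 0 \qquad (z\in\CC),$$
which supplies both the displayed factorization and the hypothesis $\Re F(\alpha) \ge 0$ needed in Theorem \ref{complexupperlowerbnds}(ii). Moreover $Q_\cE$ is a potential for $P_\cE$ (all its Gegenbauer coefficients equal $1$) and $\overline{Q_\cE}$ is a potential for $P_{\cE^\rev}$, so by Theorem \ref{Qpqprodtheorem} the product $F$ has nonnegative Gegenbauer coefficients (in particular $\tau^- = \emptyset$) and is a potential for a subspace of $P_{\cE\cdot\cE^\rev}$; consequently $\tau^+ \subseteq \cE\cdot\cE^\rev$, so every $(\cE\cdot\cE^\rev)$-design is automatically a $\tau^+$-design and the bound in part (ii) applies.

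The two scalars in the bound are now computed directly from (\ref{complexGegorthog}) together with $Q_{pq}(1) = \dim H(p,q)$. Orthogonality gives
$$f_0 = \inpro{F,1}_\cgeg = \inpro{Q_\cE, Q_\cE}_\cgeg = \sum_{(p,q)\in\cE} \inpro{Q_{pq},Q_{pq}}_\cgeg = \sum_{(p,q)\in\cE} Q_{pq}(1) = \dim(P_\cE),$$
while $Q_\cE(1) = \sum_{(p,q)\in\cE} \dim H(p,q) = \dim(P_\cE)$ is a positive real, so $F(1) = |Q_\cE(1)|^2 = f_0\,\dim(P_\cE)$. Theorem \ref{complexupperlowerbnds}(ii) then yields $n \ge F(1)/f_0 = \dim(P_\cE)$, which is (\ref{CdlowerestCor1nest}).

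For the equality assertion, Theorem \ref{complexupperlowerbnds}(ii) requires both that the angles of the design be roots of $F$ and that $X$ be a $\tau^-$-design; since $\tau^- = \emptyset$ the second condition is vacuous, and the zero set of $F = |Q_\cE|^2$ on the closed unit disc coincides with that of $Q_\cE$, giving the stated characterisation. The only genuinely nontrivial step is the nonnegativity of the Gegenbauer coefficients of $|Q_\cE|^2$, which is precisely the content of Theorem \ref{Qpqprodtheorem} (ultimately resting on the Schur product theorem); everything else reduces to a short calculation with the orthogonality relations.
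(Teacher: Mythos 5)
Your proof is correct and follows essentially the same route as the paper: factor $F=Q_\cE\overline{Q_\cE}\ge0$ using $\overline{Q_{pq}}=Q_{qp}$, invoke Theorem \ref{Qpqprodtheorem} for the nonnegativity of the Gegenbauer coefficients, compute $f_{00}=\inpro{Q_\cE,Q_\cE}_\cgeg=\sum_{(p,q)\in\cE}Q_{pq}(1)$ by orthogonality, and apply the lower bound of Theorem \ref{complexupperlowerbnds}. Your extra remark that $\tau^+\subseteq\cE\cdot\cE^\rev$ suffices (covering the $d=2$ case where the product may span only a subspace of $P_{\cE\cdot\cE^\rev}$) is a small but welcome refinement of the paper's argument.
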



\begin{proof}
Since $\overline{Q_{pq}}=Q_{qp}$, we have the equality in (\ref{CdlowerestCor1F}),
and the multiplication rule of Theorem \ref{Qpqprodtheorem} implies that
$F=Q_\cE\overline{Q_\cE}$ is a potential for $\cE\cdot\cE^\rev$,
with
$$ f_{00} 
= \inpro{F,1}_\cgeg
=\inpro{ \sum_{(p,q)\in\cE} Q_{pq}, \sum_{(r,s)\in\cE} Q_{rs} }_\cgeg
= \sum_{(p,q)\in\cE} \inpro{Q_{pq},Q_{pq}}_\cgeg
= \sum_{(p,q)\in\cE} Q_{pq}(1).  $$
Since $F=|Q_\cE|^2\ge0$, by construction, Theorem \ref{complexupperlowerbnds} gives the estimate
$$ n \ge {F(1)\over f_{00}}
= {|Q_\cE(1)|^2\over Q_\cE(1) }
= Q_\cE(1) 
= \sum_{(p,q)\in\cE} Q_{pq}(1) 
=\sum_{(p,q)\in\cE} \dim\bigl(H(p,q)\bigr) = \dim(P_\cE), $$
with equality if and only if the angles are roots of 
$Q_\cE$.  
\end{proof}

The inequality (\ref{CdlowerestCor1nest}) is given in \cite{RS14} (Theorem 4.2) for $\cU$ a
lower set, via a different argument. We now explain, and give the argument, which is classical 
and neat. A ``convolution'' product $*$ on indices (and sets of indices) is given in \cite{RS14} by
$$ \cE * \cE := \cE+\cE^\rev\subset \cE\cdot\cE^\rev. $$
For $\cE$ a lower set, one has $\cE * \cE=\cE\cdot\cE^\rev$, and so Theorem \ref{CdlowerestCor1}
can be applied.
Let $X$ be a spherical $(\cE\cdot\cE^\rev)$-design, and 
$(f_j)$ be an orthonormal basis for
$P_\cE=\oplus_{(p,q)\in\cE} H(p,q)$. 
It follows from (\ref{H(p,q)multrule}) that
$$ f_j \overline{f_k} \in 
\Bigl( \bigoplus_{(p,q)\in\cE } H(p,q) \Bigr)
\Bigl( \bigoplus_{(r,s)\in\cE^\rev} H(r,s)  \Bigr)
\subset \bigoplus_{(p,q)\in\cE\cdot\cE^\rev} H(p,q), $$
and so we have, by the cubature rule, that
$$ {1\over|X|} \sum_{x\in X} f_j(x)\overline{f_k(x)}
=\int_\SS f_j\overline{f_k}\,d\gs =\gd_{jk}, $$
i.e., $(f_j|_X)$ is orthonormal in $\CC^X$,
and hence $n=|X|\ge\dim(P_\cE)$.

We observe that the sets 
$\cU=\cE\cdot\cE^\rev=\cE^\rev\cdot\cE=\cE^\rev\cdot(\cE^\rev)^\rev$ 
in Corollary \ref{CdlowerestCor1}
are ``symmetric'' in the sense that $\cU=\cU^\rev$, i.e., 
$$ (p,q)\in \cE\cdot\cE^\rev \Iff (q,p)\in \cE\cdot\cE^\rev. $$
The indices $(p,p)$, which give the projective designs, 
are called {\bf projective indices}.
Here is an example where $\cE$ is not a lower set.

\begin{example} (Projective indices) For any $\cE$ of the form
\begin{equation}
\label{cEpqm}
\cE=\{(p,q),(p-1,q-1),\ldots,(p-m,q-m)\}, \qquad 0\le m\le\min\{p,q\},
\end{equation}
we have
$$ \cE\cdot\cE^\rev = \{(0,0),(1,1),\ldots,(t,t)\}, \qquad t=p+q, $$
a sequence of consecutive projective indices.
Hence for any $\cU$,  we have
$$ \{(0,0),(1,1),\ldots,(t,t)\} \subset \cU\cdot\cU^\rev, 
\qquad t:=\max_{(p,q)\in\cU}(p+q).
$$
Since we can take $\cE=\{(p,q)\}$, a single point, we observe that
$\cU\cdot\cU^\rev$ cannot be a ``small'' set.
\end{example}

\begin{example}
\label{ttdesignlowerbnd}
(Spherical $(t,t)$-designs)
Applying Corollary \ref{CdlowerestCor1} for the set $\cE$ of (\ref{cEpqm}) 
for $p+q=t$ and $m=\min\{p,q\}$, gives 
the estimate
$$ n \ge \sum_{j=0}^{\min\{p,q\}} \dim\bigl(H(p-j,q-j)\bigr)
= \dim\bigl(\Hom(p,q)\bigr) = {p+d-1\choose d-1}{q+d-1\choose d-1}, $$
for $n$ the number of points in a spherical $(t,t)$-design for $\Cd$. 
The best estimate from those above is obtained for the choice $p=\lfloor {t\over2}\rfloor$, 
which gives
\begin{equation}
\label{new(t,t)-designlowerbound}
n \ge 
\begin{cases}
{k+d-1\choose d-1}^2, & t=2k; \cr
{k+d-1\choose d-1}{k+d\choose d-1}, & t=2k+1.
\end{cases}
\end{equation}
This estimate improves that given in \cite{W18} (Exercise 6.22), \cite{B20}, i.e.,
$$ n\ge{t+d-1\choose d-1}. $$
The bound, and 
function $Q_\cE$ with $F=|Q_\cE|^2$, for the first
few values of $t$ are (respectively)
\begin{align*} t=1:  & \qquad n\ge d, \qquad\qquad\qquad Q_{10}(z)=dz, \cr
t=2: & \qquad n\ge d^2, \ \, \quad\qquad\qquad Q_{11}(z)+Q_{00}(z)=d\bigl((d+1)|z|^2-1\bigr), \cr
t=3: & \qquad n\ge {1\over2} d^2(d+1), \qquad  
Q_{21}(z)+Q_{10}(z)={1\over2} d(d+1) z\bigl( (d+2)|z|^2-2\bigr).
\end{align*}
The reverse inequality appears in \cite{H82} (Theorem 3.2) as an absolute bound.
The roots of the above polynomials give the absolute value $|z|$ of the angles 
for ``tight'' designs. The possible values for $|z|^2$ in a tight design for
$t=1,2,\ldots,5$ are
$$ \{0\},\quad \{{1\over d+1}\}, \quad \{0,{2\over d+2}\}, \quad 
\{{\sqrt{2(d+1)/(d+2)}\pm2\over d+3}\}, \quad
\{0,{\sqrt{3(d+1)/(d+3)}\pm3\over d+4}\}. \quad
$$
Tight designs exist for $t=1$ (orthonormal bases)
and $t=2$ (complex equiangular lines). 
There are also known examples for $t=3$ of $6$ lines at ``angles'' $|z|^2=0,{1\over2}$ 
in $\CC^2$, $40$ lines at angles $0,{1\over3}$ in $\CC^4$ 
and $126$ lines at angles $0,{1\over4}$ in $\CC^6$ 
(see \cite{H82}, \cite{HW18}). 
For $t=5$, an example of $12$ lines in $\CC^2$ was given in \cite{HW18},
with angles ${1\over2}({1\over\sqrt{5}}\pm1)$.
\end{example}

Here is an example where $\cE$ is a lower set.


\begin{example} (The simplex)
Let $\cE=\{(0,0),(1,0)\}$, which is 
a lower set. Then
$$ \tau=\cE\cdot\cE^\rev =\{(0,0),(0,1),(1,0),(1,1)\}, $$
and the bound and polynomial $Q_\cE$ for the class of $\tau$-designs 
given by Corollary \ref{CdlowerestCor1} is
$$ n\ge d+1, \qquad Q_\cE(z)=Q_{00}(z)+Q_{10}(z)=dz+1. $$
Here the  $\tau$-designs are the balanced unit norm tight frames
(see Examples \ref{Balancedsetexample} and \ref{complextightframeex}).
For such a design to be tight, it must have $d+1$ vectors with angles ${-1\over d}$. 
There is a unique such configuration, given by the $d+1$
vertices of the regular simplex in $\Rd\subset\Cd$.
\end{example}

The polynomials of Example \ref{ttdesignlowerbnd}, i.e., $Q_{\cE_t}(z)$ where
$$ 
\cE_t  
=\{(k+\gep-j,k-j)\}_{0\le j\le k}, 
\qquad t=2k+\gep, \quad \gep=0,1, 
$$
appear, 
implicitly, in
\cite{H82}. We now explain this connection, and show that they are Jacobi polynomials,
which has implications for the location and spacing of their roots.
Hoggar defines polynomials of degree $k$ by
\begin{equation}
\label{Qkgepdefn}
Q_k^\gep(x) := {(md)_{2k+\gep}\over(m)_{k+\gep}k!}\sum_{i=0}^k(-1)^i{k\choose i}
{{}_i(k+m+\gep-1)\over{}_i(2k+md+\gep-2)}x^{k-i}, \qquad \gep=0,1,
\end{equation}
$$ {}_i(x)=x(x-1)\cdots(x-i+1), \qquad (x)_i=x(x+1)\cdots(x+i-1), $$
which depend on $\gep=0,1$, and a parameter $m$, with $m={1\over2}$ being the real case, 
and $m=1$ the complex case. 
It is easily verified that these are related to our Gegenbauer polynomials as follows
$$ Q_{2k+\gep}(x) = x^\gep Q_k^\gep(x^2), \qquad m={1\over2}, $$
$$ Q_{k+\gep,k}(z)=z^\gep Q_k^\gep(|z|^2), \qquad m=1. $$
It follows from the orthogonality relations for the Gegenbauer polynomials that
the $Q_k^\gep$ are orthogonal polynomials of degree $k$ on $[0,1]$, for a Jacobi weight (depending 
on $\gep$).
The polynomial $Q_{\cE_t}(z)=Q_{\cE_{2k+\gep}}(z)$ appears in \cite{H82} as $z^\gep R_k^\gep(|z|^2)$,
and, by Lemma \ref{complexGegsumslemma}, and (\ref{Qpqdef}), it can be expressed as
\begin{align}
\label{Homkekequation}
Q_{\cE_{2k+\gep}}^{(d)}(z) 
&= \sum_{j=0}^k Q_{j+\gep,j}^{(d)}(z) \cr
&= {d\over 2k+\gep+d} Q_{k+\gep,k}^{(d+1)}(z) \cr
&=\dim\bigl(\Hom(k+\gep,k)\bigr) 
z^\gep {P_k^{(d-1,\gep)}(2|z|^2-1) \over P_k^{(d-1,\gep)}(1) } \cr
&= { 1 \over(d-1)! } \sum_{j=0}^k (-1)^j { (d+2k+\gep-j-1)!  \over j!(k-j)! (k+\gep-j)!}
z^\gep (|z|^2)^{k-j}.
\end{align}

We now give a complex analogue of Corollary \ref{RdlowerestCor2}. Let
$$ \cS_1 := \{(0,0),(1,0),(0,1)\}.  $$

\begin{corollary}
Let $\cE$ be a finite set of indices, with the property that
$$ (p\pm1,q),(p,q\pm1)\not\in\cE, \quad\forall (p,q)\in\cE, $$
and $\tau = \cS_1 \cdot(\cE\cdot\cE^\rev)$.
Then the number of points $n$ in a spherical $\tau$-design satisfies
$$ n \ge 2\dim(P_\cE) = 2\sum_{(p,q)\in\cE} \dim\bigl(H(p,q)\bigr), $$
with equality if and only if the angles are $-1$ or the roots of
$Q_\cE=\sum_{(p,q)\in\cE} Q_{pq}$.
\end{corollary}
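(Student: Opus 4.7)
My plan is to apply the lower-bound half of Theorem \ref{complexupperlowerbnds} to the polynomial
$$ F(z) := (1+\Re z)\,|Q_\cE(z)|^2 = (1+\Re z)\,Q_\cE(z)\,\overline{Q_\cE(z)}, $$
the complex analogue of the polynomial used in Corollary \ref{RdlowerestCor2}. The factor $1+\Re z = Q_{00}+\tfrac{1}{2d}Q_{10}+\tfrac{1}{2d}Q_{01}$ is nonnegative on the closed unit disc (since $\Re z\ge -|z|\ge -1$) and has Gegenbauer support $\cS_1$, so by Theorem \ref{Qpqprodtheorem} the product $F$ is itself a potential (hence $\tau^-=\emptyset$), supported in $\cS_1\cdot(\cE\cdot\cE^\rev)=\tau$, and $F\ge 0$ on the disc.

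The core computation is $F(1)/f_0 = 2\dim(P_\cE)$. The numerator is immediate: as in Corollary \ref{CdlowerestCor1}, $Q_\cE(1)=\sum_{(p,q)\in\cE}\dim H(p,q)=\dim P_\cE$, so $F(1)=2(\dim P_\cE)^2$. For the constant term $f_0$, I would split
$$ F = |Q_\cE|^2 + \tfrac12(z+\overline z)\,|Q_\cE|^2 $$
and use Corollary \ref{Qpqprodthcorollary} (equivalently Example \ref{conjzFresult} and its $z$-analogue) to identify the constant terms of $\overline z\,|Q_\cE|^2$ and $z\,|Q_\cE|^2$ with the $Q_{10}$- and $Q_{01}$-Gegenbauer coefficients of $|Q_\cE|^2$ respectively. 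Provided these two coefficients vanish, $f_0$ equals the constant term of $|Q_\cE|^2$, which as in Corollary \ref{CdlowerestCor1} is $\sum_{(p,q)\in\cE}Q_{pq}(1)=\dim(P_\cE)$.

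The one nontrivial step, and the only place where the hypothesis on $\cE$ enters, is verifying that the $Q_{10}$- and $Q_{01}$-coefficients of $|Q_\cE|^2 = \sum_{(p,q),(r,s)\in\cE} Q_{pq}\,Q_{sr}$ vanish. By the product rule of Theorem \ref{Qpqprodtheorem}, $Q_{pq}Q_{sr}$ contains the index $(1,0)$ iff there is $j$ with $p+s-j=1$, $q+r-j=0$ and $j\le\min\{p,r\}+\min\{q,s\}$. Eliminating $j=q+r$ yields $(p-r)+(s-q)=1$ together with $\min\{p,r\}+\min\{q,s\}\ge q+r$; a short case split on the signs of $p-r$ and $q-s$ forces $(r,s)\in\{(p-1,q),(p,q+1)\}$, both excluded by the hypothesis $(p\pm 1,q),(p,q\pm 1)\notin\cE$. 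The $Q_{01}$ case is symmetric. With this in hand, Theorem \ref{complexupperlowerbnds}(ii) gives $n\ge F(1)/f_0 = 2\dim(P_\cE)$, with equality iff the angles of the design are roots of $F$, i.e., $z=-1$ or $Q_\cE(z)=0$.
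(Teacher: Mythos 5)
Your proposal is correct and follows essentially the same route as the paper: the same potential $F(z)=(1+\Re z)\,|Q_\cE(z)|^2$, the same appeal to the lower bound of Theorem \ref{complexupperlowerbnds}, and the same use of the hypothesis on $\cE$ to show the cross terms contribute nothing to $f_0$. The only cosmetic difference is that you extract the constant term via the $Q_{10}$- and $Q_{01}$-Gegenbauer coefficients of $|Q_\cE|^2$ and Corollary \ref{Qpqprodthcorollary}, whereas the paper computes $f_{00}=\inpro{F,1}_\cgeg$ directly by observing that $zQ_{pq}\in H(p+1,q)\oplus H(p,q-1)\perp P_\cE$; these are the same orthogonality fact viewed from opposite sides.
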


\begin{proof}
Since $z={1\over d}Q_{10}(z)$ and $\overline{z}={1\over d}Q_{01}(z)$, 
it follows from 
Theorem \ref{Qpqprodtheorem} that
a potential for the $\tau$-designs is given by
$$ F(z):=\Bigl({z+\overline{z}\over2}+1\Bigr)
|Q_\cE|^2 \ge0. $$
Let $(p,q)\in\cE$. Since 
$(1,0)\cdot(p,q)=\{(p+1,q),(p,q-1)\}$, (\ref{H(p,q)multrule}) gives
$$ zQ_{pq}(z)={1\over d}Q_{10}(z)Q_{pq}(z) \in H(p+1,q)\oplus H(p,q-1) \perp P_\cE, $$
i.e., $zQ_{pq}(z)$ is orthogonal to $P_\cE$, and similarly for $\overline{z}Q_{pq}(z)$,
so that
$$ f_{00} = \inpro{F,1}_\cgeg
=\inpro{ \Bigl({z+\overline{z}\over2}+1\Bigr)
Q_\cE,Q_\cE }_\cgeg
= \sum_{(p,q)\in\cE} \inpro{Q_{pq},Q_{pq}}_\cgeg
= \sum_{(p,q)\in\cE} Q_{pq}(1).  $$
Thus, by Theorem \ref{complexupperlowerbnds}, we have the estimate
$$ n \ge {F(1)\over f_{00}}
= {2 |Q_\cE(1)|^2\over Q_\cE(1)}
= 2 Q_\cE(1) = 2\dim(P_\cU), $$
with the equality as stated.
\end{proof}

\section{Absolute and special bounds for complex designs}
\label{morecomplexbounds}

We now give an important variant of the upper bound in 
Theorem \ref{complexupperlowerbnds}, where
\begin{itemize}
\item The condition that $F$ be a potential, or even have real Gegenbauer
coefficients, is weakened.
\item The condition $F(\ga)\le0$, $\forall \ga\in A$ is strengthened to 
$F(\ga)=0$, $\forall \ga\in A$.
\item The set $A$ and $F$ depend on each other.
\item The upper bound only depends on which Gegenbauer coefficients of $F$
are nonzero.
\end{itemize}

\begin{theorem}
\label{complexfisherbounds}
Let $X=(v_j)$ be a sequence of $n$ unit vectors in $\Cd$, 
and 
$$ F=\sum_{(p,q)} f_{pq}Q_{pq}, \qquad f_{pq}\in\CC, $$ 
be a polynomial with
$$ F(1)=1, \qquad
A:=\{z\in\CC:|z|\le1,F(z)=0\}, \qquad
\tau:=\{(p,q):f_{pq}\ne0\}. $$  
Then  
the $n\times n$ matrix $M=[F(\inpro{v_j,v_k})]_{j,k=1}^n$ satisfies
\begin{equation}
\label{rankMcdn}
\rank(M) \le \min\Bigl\{n,\sum_{(p,q)\in\tau}\dim(H(p,q))\Bigr\}.
\end{equation}
In particular, if $X$ is an $A$-code, i.e., $F(\inpro{v_j,v_k})=0$, $j\ne k$,
 then 
\begin{equation}
\label{absbndeqn}
n \le \sum_{(p,q)\in\tau}\dim(H(p,q)),
\end{equation}
with equality if and only if
\begin{equation}
\label{Fequalform}
F={1\over n} \sum_{(p,q)\in\tau} Q_{pq},
\end{equation}
in which case the angles of $X$ are roots of $F$,
and
$X$ is a spherical $\tau$-design if and only if $(0,0)\in\tau$.
\end{theorem}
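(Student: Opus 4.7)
The plan is to realize $M$ via a rank-bounded matrix factorization through an orthonormal basis for the reproducing kernel of $P_\tau$, specialize to the $A$-code case where $M$ collapses to the identity, and extract the equality statement from the resulting orthogonality relations. Choose an orthonormal basis $(Y_s^{(p,q)})_s$ of each $H(p,q)$ with $(p,q) \in \tau$, and let $U$ be the $n \times N$ matrix with entries $U_{j,(p,q,s)} := Y_s^{(p,q)}(v_j)$, where $N := \sum_{(p,q) \in \tau} \dim H(p,q)$. The addition formula
$$Q_{pq}(\inpro{v_j,v_k}) = \sum_s Y_s^{(p,q)}(v_j)\overline{Y_s^{(p,q)}(v_k)}$$
immediately gives $M = UDU^*$ with $D := \diag(f_{pq})$ on $\CC^N$ (with each $f_{pq}$ repeated $\dim H(p,q)$ times). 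Since $f_{pq} \ne 0$ for $(p,q) \in \tau$, $D$ is invertible, so $\rank(M) = \rank(UDU^*) \le \rank(U) \le \min\{n,N\}$, proving (\ref{rankMcdn}). Under the $A$-code hypothesis one has $M_{jj} = F(1) = 1$ and $M_{jk} = 0$ for $j \ne k$, i.e.\ $M = I_n$, so $\rank(M) = n$, which yields (\ref{absbndeqn}).

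For the equality case $n = N$, the bound $\rank(U) \ge \rank(UDU^*) = n$ combined with $\rank(U) \le n$ forces $U$ to be square of full rank, hence invertible. Then $UDU^* = I$ rearranges to $D = (U^*U)^{-1}$, and since $D$ is diagonal so is $U^*U$. The key step examines the diagonal entries of $U^*U$: at index $((p,q),s)$ one obtains $\sum_{j=1}^n |Y_s^{(p,q)}(v_j)|^2 = 1/f_{pq}$, independent of $s$. Summing over $s$ and using the reproducing kernel diagonal identity $\sum_s |Y_s^{(p,q)}(v_j)|^2 = Q_{pq}(1) = \dim H(p,q)$ at every unit vector, I get $n\,\dim H(p,q) = \dim H(p,q)/f_{pq}$, so $f_{pq} = 1/n$ for every $(p,q) \in \tau$, establishing (\ref{Fequalform}). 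The converse is immediate: if $F$ has this form, then $F(1) = N/n$, and the hypothesis $F(1) = 1$ forces $n = N$. The angles of $X$ being roots of $F$ is simply the $A$-code hypothesis restated.

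For the final biconditional on $\tau$-designs, if $(0,0) \in \tau$ then the constant $Y^{(0,0)}_1 \equiv 1$ sits inside $U$, and the off-diagonal block of $U^*U$ between $((0,0),1)$ and any $((p,q),s)$ with $(p,q) \in \tau \setminus \{(0,0)\}$ yields $\sum_j Y_s^{(p,q)}(v_j) = 0$; together with the automatic integration of constants this is precisely the cubature identity for $\tau$-designs. Conversely, if $(0,0) \notin \tau$ then $F(\inpro{\cdot,w}) \in P_\tau$ for every $w \in \SS$, so a putative $\tau$-design would satisfy $\frac{1}{n}\sum_j F(\inpro{v_j,w}) = \int_\SS F(\inpro{z,w})\,d\gs(z) = f_{00} = 0$; specializing to $w = v_k$ and using $M = I$ yields $1/n = 0$, a contradiction. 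The principal obstacle is the double-averaging deduction that pins $f_{pq}$ to $1/n$; the remaining ingredients are standard linear algebra and direct application of the reproducing kernel formalism together with the $\tau$-design cubature identity.
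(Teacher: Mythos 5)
Your proposal is correct, and the first half (the factorization $M=UDU^{*}$ through an orthonormal basis of $P_\tau$, the rank bound, and the collapse to $M=I_n$ for an $A$-code) is exactly the paper's argument. Where you genuinely diverge is in the equality analysis. The paper first invokes Sylvester's law of inertia to conclude $f_{pq}>0$ (so that $F$ is a potential), then applies its product machinery (Corollary \ref{Qpqprodthcorollary}: $F\overline{Q_{pq}}$ is a potential with constant term $f_{pq}Q_{pq}(1)$) to a double sum over $X$ to obtain $f_{pq}\le 1/n$, and finally forces equality by summing against $F(1)=1$. You instead invert the relation $UDU^{*}=I$ directly to get $U^{*}U=D^{-1}$, and read $f_{pq}=1/n$ off the diagonal by summing $\sum_{j}\lvert Y_s^{(p,q)}(v_j)\rvert^{2}=1/f_{pq}$ over $s$ and using $\sum_s\lvert Y_s^{(p,q)}(v_j)\rvert^{2}=Q_{pq}(1)$. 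This is purely linear-algebraic, bypasses both Sylvester and the Gegenbauer product corollary, and as a bonus hands you the full orthogonality $U^{*}U$ diagonal, from which the $\tau$-design biconditional falls out immediately (the off-diagonal block against the constant column gives $\sum_j Y_s^{(p,q)}(v_j)=0$, and your contradiction argument for $(0,0)\notin\tau$ via $\tfrac1n\sum_j F(\inpro{v_j,v_k})=f_{00}$ replaces the paper's computation with the potential $F-f_{00}$). The paper's route has the advantage of exhibiting the inequality $f_{pq}\le 1/n$ as a consequence of the general potential formalism it has built (and so generalizes beyond the square case); yours is shorter, more self-contained, and makes the rigidity of the equality case transparent. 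You also explicitly verify the converse direction of (\ref{Fequalform}), which the paper leaves implicit.
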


\begin{proof}
Let $H_{pq}=(W_j^{(p,q)})$ be a row vector whose entries are an orthonormal basis
for $H(p,q)$, then the reproducing kernel for $H(p,q)$ is
$$ K_{pq}(\xi,\eta)=\sum_j W_j^{(p,q)}(\xi) \overline{W_j^{(p,q)}(\eta)}
=H_{pq}(\xi) H_{pq}(\eta)^* = Q_{pq}(\inpro{\xi,\eta}). $$
Using this, we may write $M$ as a product of matrices
$$ M=H_X \diag(f_{pq}I_{\dim(H(p,q)})_{(p,q)\in\tau} H_X^*,
\qquad H_X:= [H_{pq}(v_j)]_{1\le j\le n,(p,q)\in\tau}. $$
The $\sum_{(p,q)\in\tau}\dim(H(p,q))$ columns of $H_X$ are an
(appropriately ordered) orthonormal basis for $P_\tau=\oplus_{(p,q)\in\tau} H(p,q)$ sampled at 
the $n$ points $v_j\in X$. Given the size of $H_X$,
we obtain (\ref{rankMcdn}) from $\rank(M)\le\rank(H_X)$.
 
Now suppose that $X$ is an $A$-code. Then $M=I$, which is rank $n$, and
(\ref{rankMcdn}) implies (\ref{absbndeqn}). 
Further, suppose that there is equality in (\ref{absbndeqn}),
i.e., $H_X$ is square, and hence invertible. 
Since $\diag(f_{pq}I_{\dim(H(p,q)})_{(p,q)\in\tau}$
is congruent to $I$, by Sylvester's law of inertia, its eigenvalues are all
positive, i.e., $f_{pq}>0$, $\forall (p,q)\in\tau$.
Thus $F$ is a potential for $\tau$. 
By Corollary \ref{Qpqprodthcorollary},
$G=F\overline{Q_{pq}}=FQ_{qp}$ is a potential, with constant
$g_{00}=f_{pq} Q_{pq}(1)$.
Thus $F\overline{Q_{pq}}-f_{pq}Q_{pq}(1)$ is a potential, with zero 
constant, and we have
$$ \sum_j\sum_k \bigl(F\overline{Q_{pq}}-f_{pq}Q_{pq}(1)\bigr)(\inpro{v_j,v_k})
= nF(1)\overline{Q_{pq}}(1)-n^2 f_{pq}Q_{pq}(1)
= nQ_{pq}(1)( {1\over n}-f_{pq})
\ge 0, $$
which implies that
\begin{equation}
\label{fpqineq}
f_{pq}\le {1\over n}, \qquad (p,q)\in\tau.
\end{equation}
This gives
$$ 1=F(1)=\sum_{(p,q)\in\tau} f_{pq}Q_{pq}(1)\le {1\over n}\sum_{(p,q)\in\tau}Q_{pq}(1)
={1\over n} \sum_{(p,q)\in\tau}\dim(H(p,q))  =1, $$
so we must have equality in (\ref{fpqineq}) throughout, 
and we obtain (\ref{Fequalform}).
Finally, we observe that $G=F-f_{00}$ is a potential for $\tau$-designs, 
with zero constant, and calculate
$$ 
\sum_j\sum_k G(\inpro{v_j,v_k}) = nF(1)-n^2 f_{00}
=n^2({1\over n}-f_{00})
=
\begin{cases}
0, & (0,0)\in\tau; \cr
n, & (0,0)\not\in\tau,
\end{cases}
$$
so that $X$ is a $\tau$-design if and only if $(0,0)\in\tau$.
\end{proof}

We will refer to any sequence $(v_j)$ of unit vectors in $\Cd$
as ``a design'', which is warranted since it is indeed a $\{(0,0)\}$-design.
The obvious way to apply Theorem \ref{complexfisherbounds} is for an
$F$ constructed to vanish at certain prescribed angles. 
In this regard, we say that a polynomial $F$ is an {\bf annihilator}
(polynomial) for a design, or a collection of designs, if
$F(1)=1$ and all angles of the designs are roots of $F$.
Sometimes the condition $F(1)=1$ is replaced by $F(1)\ne0$.
Heuristically, we desire that
\begin{itemize}
\item $F$ has a large zero set, i.e., the collection of designs that it annihilates is large.
\item $F\in P_\tau$, for some $P_\tau$ of small dimension (thereby giving a good bound).
\end{itemize}

\begin{corollary} 
\label{anncor}
If $F=\sum_{(p,q)} f_{pq}Q_{pq}$ is an annihilator polynomial of
a design $X$, then
$$ n=|X|\le\dim(P_\tau)=\sum_{(p,q)\in\tau} \dim(H(p,q)), \qquad
\tau:=\{(p,q): f_{pq}\ne0\}, $$
with equality if and only if the angles of $X$ 
are roots of $\sum_{(p,q)\in\tau} Q_{pq}$.
\end{corollary}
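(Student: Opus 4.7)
The plan is that Corollary \ref{anncor} is a direct translation of Theorem \ref{complexfisherbounds} into the language of annihilator polynomials, so the proof should be short. By definition, $F$ being an annihilator of $X$ means exactly that $F(1)=1$ and $F(\inpro{v_j,v_k})=0$ for all $j\ne k$. Setting $A:=\{z\in\CC:|z|\le 1,\,F(z)=0\}$, this is precisely the statement that $X$ is an $A$-code in the sense of Theorem \ref{complexfisherbounds}. Applying that theorem immediately yields
$$ n \le \sum_{(p,q)\in\tau}\dim(H(p,q))=\dim(P_\tau), $$
where $\tau=\{(p,q):f_{pq}\ne 0\}$ as in the hypothesis.

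For the equality clause I would invoke the equality characterisation in Theorem \ref{complexfisherbounds}, namely that $n=\dim(P_\tau)$ if and only if $F=(1/n)\sum_{(p,q)\in\tau}Q_{pq}$. In the forward direction this says at once that $F$ is a nonzero scalar multiple of $\sum_{(p,q)\in\tau}Q_{pq}$, so the angles of $X$, being roots of $F$, are also roots of $\sum_{(p,q)\in\tau}Q_{pq}$. For the reverse direction I would pass to the renormalised polynomial $\tilde G:=\sum_{(p,q)\in\tau}Q_{pq}/\dim(P_\tau)$, observe that $\tilde G$ is itself an annihilator of $X$ with support $\tau$ (its coefficients are all equal to $1/\dim(P_\tau)$, and by hypothesis it vanishes on every angle of $X$), and re-apply Theorem \ref{complexfisherbounds} with $\tilde G$ in place of $F$; matching the uniform coefficients of $\tilde G$ against the equality form $(1/n)\sum_{(p,q)\in\tau}Q_{pq}$ then forces $1/\dim(P_\tau)=1/n$, i.e., $n=\dim(P_\tau)$.

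The only real obstacle is making the reverse direction of the equality clause precise, since the equality statement of Theorem \ref{complexfisherbounds} is a joint condition on the polynomial and the size of the design; the key move is to apply the theorem to the renormalised polynomial $\tilde G$ rather than the originally given $F$, and then identify coefficients on both sides. Beyond this bookkeeping, the corollary is an entirely mechanical restatement of Theorem \ref{complexfisherbounds}.
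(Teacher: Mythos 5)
Your derivation of the inequality and of the forward direction of the equality clause is exactly the intended argument: the paper offers no separate proof of Corollary \ref{anncor}, treating it as an immediate specialisation of Theorem \ref{complexfisherbounds}, and your identification of ``$F$ is an annihilator of $X$'' with ``$X$ is an $A$-code for $A$ the zero set of $F$'' is the right translation. In the forward direction, equality forces $F=\frac{1}{n}\sum_{(p,q)\in\tau}Q_{pq}$ by the theorem, and since the angles of $X$ are roots of $F$ by hypothesis, they are roots of $nF=\sum_{(p,q)\in\tau}Q_{pq}$. All of this is fine.

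The reverse direction of your equality argument, however, is circular. Theorem \ref{complexfisherbounds} asserts that equality $n=\dim(P_\tau)$ holds if and only if the annihilator equals $\frac{1}{n}\sum_{(p,q)\in\tau}Q_{pq}$, where $n=|X|$ is the (unknown) size of the design. When you apply this to $\tilde G=\sum_{(p,q)\in\tau}Q_{pq}/\dim(P_\tau)$, the condition ``$\tilde G=\frac{1}{n}\sum_{(p,q)\in\tau}Q_{pq}$'' reads precisely ``$\frac{1}{\dim(P_\tau)}=\frac{1}{n}$'', i.e.\ it \emph{is} the conclusion $n=\dim(P_\tau)$; matching coefficients therefore yields a tautology, not a proof. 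Indeed, no such argument can work without extra hypotheses: take $X$ to be $k<d$ orthonormal vectors in $\Cd$ with annihilator $F(z)=z$, so $\tau=\{(1,0)\}$ and the single angle $0$ is a root of $\sum_{(p,q)\in\tau}Q_{pq}=Q_{10}(z)=dz$, yet $n=k<d=\dim(H(1,0))$. The ``if'' half of the corollary's equality clause is thus not literally recoverable from the theorem (the paper is being loose here, in the same way Theorem \ref{Acodebnd} bundles ``angles are roots of $F$'' together with ``$X$ is a $P$-design'' in its equality condition); the substantive content is the bound together with the ``only if'' direction, which you have. You should either prove only that direction, or note explicitly that the converse requires the additional information that equality is attained (equivalently, that $F$ has the uniform-coefficient form).
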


This result appears in \cite{RS14} Theorem 4.2, for $\tau=\cS$ a lower set. 
There, a design with an annihilator polynomial 
in $\spam\{z^p\overline{z}:(p,q)\in\cS\}$ (which equals $P_\cS$, 
for $\cS$ a lower set) is called an $\cS$-code.
Bounds which follow from Corollary \ref{anncor} will be called
{\bf absolute bounds}, as they are in the real and projective cases.

We start with an obvious example 
(see \cite{RS14} Corollary 4.3).

\begin{example} 
\label{naiveAcodeexample}
Let $X$ be a design with $m=|A|$ angles $A\subset\{z\in\CC:|z|\le1,z\ne1\}$.
Then
$$ F(z):= \prod_{\ga\in A}{z-\ga\over1-\ga} =\sum_{k=0}^m f_{k0}Q_{k0}(z), $$
gives the estimate 
$$ n=|X|\le\sum_{k=0}^m \dim(H(k,0))
=\sum_{k=0}^m {k+d-1\choose d-1} = {m+d\choose d}. $$
\end{example}

Complex codes $X$ with two and three angles (inner products) have been studied by
\cite{NS16}, \cite{NS18}. For two angles \cite{RS14}, \cite{NS16} gives the bound
\begin{equation}
\label{NS16bound}
n=|X| \le 
\begin{cases}
2d+1, & \hbox{$d$ is odd}; \cr
2d, & \hbox{$d$ is even},
\end{cases}
\end{equation}
which is clearly better than that of Example \ref{naiveAcodeexample}
($m=2$), i.e.,
$$ n=|X| \le {1\over 2}(d+1)(d+2). $$
Interestingly, the only case where these two bounds coincide is for $d=1$,
in which case
$$ F(z)={1\over3} \bigl(Q_{20}^{(1)}(z)+Q_{10}^{(1)}(z)+Q_{00}^{(1)}(z)\bigr)
= {1\over3}(z^2+z+1). $$
The zeros of this polynomial are the primitive third roots of unity,
and the third roots of unity $X=\{1,\go,\go^2\}$ gives the unique
three-vector two angle code for $\CC^1$ attaining the bound (see Table 1, \cite{NS16}).

If $\ga$ is an angle of design, then so is its conjugate $\overline{\ga}$,
i.e., two-angle designs have angles $\{\ga,\overline{\ga}\}$.
These have the same real part, and so are both roots of the polynomial
$$ F(z)=\Re(z)-\Re(\ga)={z+\overline{z}\over2}-{\ga+\overline{\ga}\over2}
={Q_{10}(z)\over 2d}+{Q_{01}(z)\over 2d}-\Re(\ga)Q_{00}(z). $$
Thus, by applying Theorem \ref{complexfisherbounds},
we obtain essentially the bound of (\ref{NS16bound}).

\begin{example} 
\label{twoangleests}
The collection of designs $X$ whose angles have a fixed real part $a\in\RR$,
e.g., two-angle designs with angles $\{\ga,\overline{\ga}\}$, where $\Re(\ga)=a$, 
satisfy the bound
$$ n=|X|\le
\begin{cases}
2d+1, & a\ne 0; \cr
2d, & a=0.
\end{cases}
$$
Further, if there is equality above for $a\ne0$, then
$$ Q_{10}(\ga)+Q_{01}(\ga)+Q_{00}(\ga)
=  d\ga+d\overline{\ga}+1
= 2d\Re(\ga)+1=0 \Implies a=\Re(\ga)=-{1\over 2d}. $$
The two-angle complex designs are an example of complex 
equiangular lines. 
The Gramian matrix which determines a two-angle complex design has a particularly 
simple form, which can be associated naturally with a graph (on its elements)
via a conference matrix.
In this way, the two-angle complex designs 
have been 
classified in \cite{NS16} (also see the corresponding complex equiangular lines in 
\cite{R07}, \cite{W18} Exercises 12.11 and 12.12).
\end{example}

If a design $X$ has three angles, then they must be a complex conjugate pair 
$\ga,\overline{\ga}$, with $a=\Re(\ga)=\Re(\overline{\ga})$, 
and a real angle $b\in\RR$. Since $b$ is a root of the polynomial $z-b$, 
we have the following annihilator polynomial
\begin{align} 
F(z) &= (\Re(z)-\Re(\ga))(z-b) \label{threeannipoly} \\
&= {Q_{20}(z)\over d(d+1)} +{Q_{11}(z)\over2d(d+1)}
-\Bigl({b\over2}+a\Bigr){Q_{10}(z)\over d} 
-b {Q_{01}(z)\over 2d} + \Bigl(ab+{1\over 2d}\Bigr)Q_{00}(z). \nonumber
\end{align}
For a generic $a,b$ (all coefficients above nonzero) this gives
\begin{equation}
\label{genericbound}
n \le Q_{20}(1)+Q_{11}(1)+Q_{10}(1)+Q_{01}(1)+Q_{00}(1)
= {d(3d+5)\over 2}.
\end{equation}
The bound given in \cite{NS16} for three angle complex designs is
\begin{equation}
\label{threeanglebound}
n=|X|\le
\begin{cases}
4, & d=1;\cr
d(d+2), & d\ge 2.
\end{cases}
\end{equation}
These agree for $d=1$ ($n\le4$), where the $F$ for four vectors
meeting the bound (\ref{genericbound}) becomes
$$ F(z)= {1\over 4} (z^2+2|z|^2-1+z+\overline{z}), $$
which has roots $-1,i,-i$, which are the three angles of
$X=\{1,-1,i,-i\}\subset\CC^1$.

For $d\ge2$, the bound (\ref{threeanglebound}) improves 
the generic bound (\ref{genericbound}). In particular, for 
$d=2$, the bounds are $n\le8$ and $n\le11$. By specifying 
a particular form for the three angle design, the number of 
terms in (\ref{threeannipoly}) can be reduced, giving a sharper bound.
To get $n\le8$, one must choose
$$ {b\over 2}+a=0, \quad ab+{1\over2d}=0 
 \Implies a=\pm{1\over2\sqrt{d}}, \ 
b=\mp {1\over\sqrt{d}} \ (d=2).  $$
Given that \cite{NS16} show that there is a unique three-angle design
of eight vectors in $\CC^2$ given by the two-angle design of four
vectors with $a=0$ (equiangular lines) multiplied by $\pm1$ ($b=-1$), there
can be no three-angle design with the above parameters.

For $d=3$ the bounds are $n\le15$ and $n\le 21$, whilst the maximal
number of vectors in a three-angle design is calculated to be
$n=9$ (one can take an orthonormal basis multiplied by the three roots of
unity, $a=-1/2$, $b=0$). Moreover, \cite{NS16} Theorem 14 shows that
equality in (\ref{threeanglebound}) can only be obtained for $d=1,2$
(where $b=-1$).

\begin{theorem}
Let $X$ be a 
design for which the real part of its angles can take $s$ possible values. 
This includes the complex designs with $m=2s$ angles, none real.
Then
\begin{equation}
\label{anglerealpartest}
n=|X|\le \sum_{p+q\le s}\dim(H(p,q))
= {s+2d-1\choose 2d-1}+{s+2d-2\choose 2d-1}.
\end{equation}
\end{theorem}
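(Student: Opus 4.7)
The plan is to apply the absolute bound of Corollary \ref{anncor} to a well-chosen annihilator polynomial. Let $a_1,\ldots,a_s\in\RR$ be the $s$ possible values of the real part of an angle of $X$, and define
$$ F(z) := \prod_{i=1}^s {\Re(z)-a_i\over 1-a_i} = \prod_{i=1}^s {(z+\overline{z})/2-a_i\over 1-a_i}. $$
Then $F(1)=1$, and $F(\ga)=0$ for every angle $\ga$ of $X$ (since $\Re(\ga)\in\{a_1,\ldots,a_s\}$), so $F$ is an annihilator polynomial for $X$ in the sense required by Corollary \ref{anncor}.

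The key step is to identify $\tau=\{(p,q):f_{pq}\ne0\}$, where $F=\sum_{(p,q)} f_{pq}Q_{pq}$. Expanding the product shows that $F$ lies in the linear span of the monomials $z^j\overline{z}^k$ with $j+k\le s$. By (\ref{Hpqtele}) each such monomial is a (positive real) linear combination of complex Gegenbauer polynomials $Q_{j-i,k-i}$ with $0\le i\le\min\{j,k\}$, and every index $(j-i,k-i)$ satisfies $(j-i)+(k-i)\le j+k\le s$. Consequently
$$ \tau \subset \{(p,q): p+q\le s\}, $$
so Corollary \ref{anncor} yields the estimate
$$ n=|X|\le \sum_{(p,q):\,p+q\le s}\dim(H(p,q)). $$

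It remains to evaluate the right hand side, which is a purely combinatorial calculation. Using $\Harm_k(\Cd)=\bigoplus_{p+q=k}H(p,q)\approx\Harm_k(\RR^{2d})$, we may reorganise
$$ \sum_{p+q\le s}\dim(H(p,q)) = \sum_{k=0}^s \dim\bigl(\Harm_k(\RR^{2d})\bigr) = \dim\bigl(\Pi_s(\RR^{2d})|_\SS\bigr). $$
Applying the formula for $\dim(\Pi_s(\Rd)|_\SS)$ recalled in Example \ref{t-designF} with $d$ replaced by $2d$ gives exactly
$$ \dim\bigl(\Pi_s(\RR^{2d})|_\SS\bigr) = {s+2d-1\choose 2d-1}+{s+2d-2\choose 2d-1}, $$
completing the proof of (\ref{anglerealpartest}).

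The only real point requiring care is the second step: verifying that the Gegenbauer support of $F$ is contained in $\{(p,q):p+q\le s\}$. This is immediate once one observes that $\Re(z)=(z+\overline z)/2$ and invokes the triangular (telescoping) relation (\ref{Hpqtele}) between the monomial basis $z^p\overline{z}^q$ and the Gegenbauer basis $Q_{pq}$; no further estimation is needed. The parenthetical remark ``This includes the complex designs with $m=2s$ angles, none real'' just notes that for such designs the $s$ conjugate pairs $\{\ga,\overline\ga\}$ produce exactly $s$ distinct real parts.
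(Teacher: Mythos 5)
Your proposal is correct and follows essentially the same route as the paper: the same annihilator polynomial $F(z)=\prod_i(\Re(z)-a_i)/(1-a_i)$, the same appeal to Corollary \ref{anncor}, and the same dimension count $\sum_{p+q\le s}\dim(H(p,q))=\dim(\Hom_s(\RR^{2d}))+\dim(\Hom_{s-1}(\RR^{2d}))$. The only (immaterial) difference is how you justify that the Gegenbauer support of $F$ lies in $\{(p,q):p+q\le s\}$ — you use the explicit telescoping identity (\ref{Hpqtele}) monomial by monomial, whereas the paper notes that the $\binom{s+2}{2}$ polynomials $(Q_{pq})_{p+q\le s}$ are linearly independent and hence a basis for the polynomials of degree $\le s$ in $z,\overline{z}$.
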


\begin{proof}
Let $a_1,\ldots,a_s\in\RR$ be the possible real parts of the angles. Then
$$ F(z)
= \prod_{j=1}^s {\Re(z)-a_j\over 1-a_j}
= \prod_{j=1}^s {{z+\overline{z}\over2}-a_j\over 1-a_j}
= \sum_{p+q\le s} f_{pq} Q_{pq}(z), $$
is an annihilator polynomial for $X$. The Gegenbauer expansion above
follows because the ${s+2\choose2}$ polynomials $(Q_{pq}^{(d)}(z))_{p+q\le s}$ 
are linearly independent, and hence are a basis for the space of polynomials of
degree $s$ in the variables $z$ and $\overline{z}$.
Applying Corollary \ref{anncor}, then gives the result, 
where the upper bound is calculated via
\begin{align*}
\sum_{p+q\le s}\dim(H(p,q)) 
&=\dim(\Hom_s(\RR^{2d}))+\dim(\Hom_{s-1}(\RR^{2d})) \\
&={s+2d-1\choose 2d-1}+{s-1+2d-1\choose 2d-1}. 
\end{align*}
\end{proof}

For $s=1$, we recover Example \ref{twoangleests}.

\begin{example}
($s=2$) For four-angle designs (\ref{anglerealpartest}) gives the estimate
$$ n=|X|\le d(2d+3), $$
which has the same growth in $d$ as the estimates for three-angle designs.
As for the case $s=1$, better estimates can be obtained if the values
of the angles are constrained. Indeed
$$ (\Re(z)-a)(\Re(z)-b)= {Q_{20}(z)+Q_{11}(z)+Q_{02}(z)\over 2d(d+1)} 
-{a+b\over 2d}\{Q_{10}(z)+Q_{01}(z)\} 
+\Bigl(ab+{1\over 2d}\Bigr),
$$
so that if $a\ne b$ are the real parts of the angles, then
$$ n \le
\begin{cases}
d(2d+1), & b=-a; \\
d(2d+1)-1, & b=-a=\pm{1\over\sqrt{2d}}.
\end{cases}
$$
\end{example}

We conclude this section, by showing that the estimates 
for the number of lines in $\Cd$ (projective designs)
naturally follow from Corollary \ref{anncor}.
For lines, or projective designs, there is no meaningful notion of
angle $\inpro{v_j,v_k}\in\CC$, but rather the projective invariants
$$ |\inpro{v_j,v_k}|^2=a, $$
are of interest. These are sometimes referred to as angles (particularly
for projective designs), e.g., as in ``complex equiangular lines''.
Fortunately, there simple and natural annihilator polynomials for
this ``angle'', i.e.,
$$ F(z)=|z|^2-a, \quad a\ne 0, \qquad F(z)=z,\ F(z)=\overline{z}, \quad a=0. $$

Let $X=(v_j)$ be unit vectors giving a set of lines in $\Cd$. 
If the angles $|\inpro{v_j,v_k}|^2$, $j\ne k$, take $s$ possible values
$A\subset[0,1)$, then an annihilator polynomial for $X$ is given by
$$ F(z):= z^\gep \prod_{a\in A\setminus\{0\}} {|z|^2-a\over 1-a}
=\sum_{j=0}^{s-\gep} f_{j+\gep,j} Q_{j+\gep,j}(z), 
\qquad \gep=\gep(A):=
\begin{cases}
0, & 0\not\in A; \\
1, & 0\in A.
\end{cases}
$$
Such configurations are said to be an {\bf $A$-set} or {\bf $s$-angular}.
We may apply Corollary \ref{anncor},
to obtain the estimate for $A$-sets given in \cite{DGS77} 
(Theorem 6.1), i.e.,
\begin{equation}
\label{projestimate}
n=|X|\le \sum_{j=0}^{s-\gep}\dim(H(j+\gep,j)).
\end{equation}

\begin{theorem}
Let $X=(v_j)$ be unit vectors giving a set of $s$-angular lines in $\Cd$,
i.e., whose the angles $x=|z|^2=|\inpro{v_j,v_k}|^2$, $j\ne k$, 
take $s$ possible values $A\subset[0,1)$. Then
\begin{equation}
\label{s-angularbound}
n=|X|\le {s+d-1\choose d-1}{s-\gep+d-1\choose d-1}, \qquad
\gep=\gep(A):=
\begin{cases}
0, & 0\not\in A; \\
1, & 0\in A.
\end{cases}
\end{equation}
Further, the angles of the sets of $s$-angular lines giving equality in
(\ref{s-angularbound}) are roots of
\begin{equation}
\label{tights-angularpoly}
f(x)= x^\gep P_{s-\gep}^{(d-1,\gep)}(2x-1).
\end{equation}
\end{theorem}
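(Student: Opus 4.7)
The plan is to derive both the binomial bound and the root characterization directly from the estimate (\ref{projestimate}) by recognizing the right-hand side as the dimension of $\Hom(s,s-\gep)$, and then by invoking the closed-form sum (\ref{Homkekequation}) for $\sum_j Q_{j+\gep,j}^{(d)}$.

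\textbf{Step 1: Rewrite the sum as $\dim(\Hom(s,s-\gep))$.} Starting from (\ref{projestimate}), I would re-index $j$ by $\ell:=(s-\gep)-j$, so that
$$\{(j+\gep,j):0\le j\le s-\gep\}=\{(s-\ell,\,s-\gep-\ell):0\le \ell\le s-\gep\}.$$
The right-hand side of this reindexed collection is precisely the index set of the direct sum decomposition
$$\Hom(s,s-\gep)=\bigoplus_{\ell=0}^{s-\gep} H(s-\ell,\,s-\gep-\ell),$$
so that $\sum_{j=0}^{s-\gep}\dim(H(j+\gep,j))=\dim(\Hom(s,s-\gep))$.

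\textbf{Step 2: Evaluate the dimension.} Using the product formula for $\dim(\Hom(p,q))$ recorded in Example \ref{ttdesignlowerbnd} (i.e.\ $\dim(\Hom(p,q))=\binom{p+d-1}{d-1}\binom{q+d-1}{d-1}$), I obtain
$$\dim(\Hom(s,s-\gep))=\binom{s+d-1}{d-1}\binom{s-\gep+d-1}{d-1},$$
which, combined with (\ref{projestimate}), is exactly (\ref{s-angularbound}).

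\textbf{Step 3: Characterize the angles in the equality case.} By Corollary \ref{anncor}, equality in the annihilator bound forces the angles of $X$ to be roots of $Q_\cE:=\sum_{(p,q)\in\tau}Q_{pq}$, where here $\tau=\{(j+\gep,j):0\le j\le s-\gep\}$. This is precisely the index set $\cE_{2k+\gep}$ of equation (\ref{Homkekequation}) with $k=s-\gep$, which yields
$$\sum_{j=0}^{s-\gep}Q_{j+\gep,j}^{(d)}(z)=\dim(\Hom(s,s-\gep))\,z^\gep\,\frac{P_{s-\gep}^{(d-1,\gep)}(2|z|^2-1)}{P_{s-\gep}^{(d-1,\gep)}(1)}.$$
Since angles of $X$ are determined by $x=|z|^2$ (and $z=0$ occurs iff $x=0$, already accounted for by the $z^\gep$ factor), the admissible angles are precisely the roots of $f(x)=x^\gep P_{s-\gep}^{(d-1,\gep)}(2x-1)$, as claimed.

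\textbf{Expected obstacle.} The proof is largely bookkeeping: the hard work was already done in establishing (\ref{Homkekequation}) and the annihilator bound in Corollary \ref{anncor}. The only care required is the reindexing in Step 1 and confirming that the translation from roots of $Q_\cE(z)$ (in $z$) to roots of $f(x)$ (in $x=|z|^2$) is faithful, including the boundary case $x=0$ when $\gep=1$. No new analytic input is needed.
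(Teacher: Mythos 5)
Your proposal is correct and follows essentially the same route as the paper: both start from the absolute bound (\ref{projestimate}) obtained from the annihilator polynomial $F(z)=z^\gep\prod_{a\in A\setminus\{0\}}(|z|^2-a)/(1-a)$ via Corollary \ref{anncor}, and both read off the tight angles from the closed form (\ref{Homkekequation}) of $\sum_{j=0}^{s-\gep}Q_{j+\gep,j}^{(d)}$. The only (cosmetic) difference is that you evaluate the dimension sum by reindexing it as $\dim(\Hom(s,s-\gep))$ and quoting the product formula, whereas the paper reaches the same binomial product by applying Lemma \ref{complexGegsumslemma} at $z=1$ together with (\ref{Hpqdimension}).
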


\begin{proof}
The upper bound of (\ref{projestimate}) simplifies, 
using Lemma \ref{complexGegsumslemma}
and (\ref{Hpqdimension}), to
\begin{align*}
\sum_{j=0}^{s-\gep} Q_{j+\gep,j}^{(d)}(1)
&= {d\over 2s-\gep+d} Q_{s,s-\gep}^{(d+1)}(1) \\
&= {d\over 2s-\gep+d} {(2s-\gep+d)(s+d-1)!(s-\gep+d-1)! \over s!(s-\gep)!d!(d-1)!} \\
& = {s+d-1\choose d-1}{s-\gep+d-1\choose d-1},
\end{align*}
which gives (\ref{s-angularbound}). The
(tight) $s$-angular designs giving 
equality in (\ref{s-angularbound}) have inner products
$z=\inpro{v_j,v_k}$, $j\ne k$, which are roots of the polynomial
\begin{equation}
\label{s-angulartight}
\sum_{j=0}^{s-\gep} Q_{j+\gep,j}^{(d)}(z) 
= {d\over 2s-\gep+d} Q_{s,s-\gep}^{(d+1)}(z). 
\end{equation}
Therefore, in view of (\ref{Homkekequation}),
the $s$ distinct possible angles $x=|z|^2=|\inpro{v_j,v_k}|$ 
are roots of the polynomial (\ref{tights-angularpoly}).
\end{proof}

By way of comparison with (\ref{s-angularbound}), 
the bound (\ref{new(t,t)-designlowerbound}) 
for spherical $(t,t)$-designs can be written as 
$$ n=|X|\ge {s+d-1\choose d-1}{s-\gep+d-1\choose d-1}, \qquad
t=2s-\gep, \quad \gep=0,1, $$
where the angles $|z|=|\inpro{v_j,v_k}|$ of the tight spherical $(t,t)$-designs 
are roots of the same polynomial (\ref{s-angulartight}).
In this way, 
\begin{itemize}
\item There is a $1$--$1$ correspondence between the tight complex spherical 
$(t,t)$-designs with $t=2s-\gep$, $\gep=0,1$, and the tight $s$-angular 
designs which have $0$ as an angle if and only if $\gep=1$.
\end{itemize}

Upper bounds obtained from annihilator polynomials which are potentials
are known as {\bf special bounds}. 
We now show, by example, how the special and absolute bounds for $s$-angular lines
(projective designs) can be obtained directly from our general results.

The annihilator polynomial for one angle
$$ F(z)=(|z|^2-a)
= {Q_{11}(z)\over d(d+1)} + {1-ad\over d} Q_{00}(z), $$
gives the absolute bounds
$$ n\le Q_{11}(1)+Q_{00}(1)=d^2, \quad a\ne{1\over d}, \qquad
n \le Q_{21}(1) = d^2-1, \quad a={1\over d}. $$
We observe that for $d=2$ there are tight such designs, four complex 
equiangular lines and the vertices of the equilateral triangle, respectively.
We may apply Theorem \ref{complexupperlowerbnds},
subject to the condition that the above $F$ is a potential, 
to obtain the special bounds
$$ n\le {F(1)\over f_0}= {1-a\over {1-ad\over d} }
= {d(1-a)\over 1-da}, \qquad a > {1\over d}. $$

The special bound holds for $a=0$, as $n\le d$, which is
considerably sharper than the absolute bound, but this cannot be proved 
by taking the limit $a\to0$. It can be proved using 
Example \ref{conjzFresult} applied to annihilator polynomial $F(z)=z$.
We illustrate this general process for $2$-angular lines.
Consider the following annihilator polynomials lines with angles
$\{0,a\}$ and $\{a,b\}$
\begin{align*}
F(z) & = z(|z|^2-a) \cr
&= {Q_{21}(z)\over {1\over2}d(d+1)(d+2)}+{Q_{10}(z)\over d}
\Bigl({2\over d+1}-a\Bigr), \cr
F(z) 
&= (|z|^2-a)(|z|^2-b) \cr
& = { Q_{22}(z)\over{1\over4} d(d+1)(d+2)(d+3)}
+\Bigl(-a-b+{4\over d+2}\Bigr) {Q_{11}(z)\over d(d+1)} \cr
& \qquad
+ {d(d+1)ab-(d+1)(a+b)+2\over d(d+1)} Q_{00}(z).
\end{align*}
By using Example \ref{conjzFresult}, and applying 
Theorem \ref{complexupperlowerbnds} to the potential $\overline{z}F(z)$,
the first gives the special bound
\begin{equation}
\label{specialboundI}
n\le {F(1)\over f_{10}} = {1-a\over {1\over d}({2\over d+1}-a)}
={d(d+1)(1-a)\over 2-(d+1)a}, \qquad
a< {2\over d+1},
\end{equation}
for sets with angles $\{0,a\}$. For sets with angles $\{a,b\}$,
direct application of Theorem \ref{complexupperlowerbnds} to the
second potential $F$ gives the special bounds
\begin{equation}
\label{specialboundII}
n\le {F(1)\over f_{00}} 
= {(1-\ga)(1-\gb)\over {d(d+1)ab-(d+1)(a+b)+2\over d(d+1)} }
= {d(d+1)(1-\ga)(1-\gb) \over d(d+1)ab-(d+1)(a+b)+2},
\end{equation}
which holds for
$$ a+b\le {4\over d+2}, \qquad d(d+1)ab-(d+1)(a+b)+2>0. $$
We observe that the limit $\gb\to0$ of this bound, gives the bound 
for angles $\{0,\ga\}$, but does not prove it.
The bounds (\ref{specialboundI}) and (\ref{specialboundII}) 
were originally obtained using the polynomials $Q_k^\gep$ of 
(\ref{Qkgepdefn}) (see \cite{DGS91}, \cite{H92}).

\bibliographystyle{alpha}
\bibliography{references}
\nocite{*}
\vfil
\end{document}

ssssssssss

In general, the coefficients of the annihilator polynomial
$$ F(z) = z^\gep \prod_{a\in A\setminus\{0\}} (|z|^2-a), $$
are (up to a scalar, and they can be calculated exactly)
$$ f_{j+\gep,j}=f_j(A):={\inpro{F,Q_{j+\gep,j}}\over Q_{j+\gep,j}(1)}, 
\qquad 0\le j\le s-\gep. $$
The coefficient in the special bound is $f_0(A)$, and the leading term 
$f_{s-\gep}(A)$ is constant and positive, so the special bound is
$$ n\le {F(1)\over f_0(A)}
={\prod_{a\in A\setminus\{0\}} (1-a) \over f_0(A)}, \qquad
f_0(A)=\inpro{F(z),z^\gep}_\cgeg>0, \quad f_j(A)\ge0, \quad 0<j<s-\gep. $$

The four angle case is a special case of $2$-angularity, compare the estimates.

If there is equality, then $H_X$ is invertible, i.e., there is unique
interpolation at the points $(v_j)$ from $P_\tau$. Let $\ell_j\in P_\tau$ 
be the corresponding Lagrange polynomials, i.e., $\ell_j(v_k)=\gd_{jk}$. 
Then 
$$ \int_\SS f= \int_\SS \sum_j f(v_j)\ell_j
= \sum_j f(v_j) \int_\SS\ell_j
= {1\over n} \sum_j f(v_j) 
\Implies  \int_\SS\ell_j = {1\over n},\quad\forall j. $$

sssssss

\section{Other stuff}

Can we define positive definite functions for the complex sphere?

Also \cite{DGS91} 

Should such a $(2,2)$-design exist, then its inner products must satisfy
$$ |\inpro{\xi,\eta}|^2 = {2(d+2)\pm\sqrt{2(d+1)(d+2)}\over(d+2)(d+3)},
\qquad\xi\ne\eta. $$
This lower bound is also the absolute bound of \cite{DGS75} for an $A$-set
with $A=\{\ga_1,\ga_2\}$, $0<\ga_1,\ga_2<1$. 

The analog of the Fisher type lower bounds giving tight designs 
is less obvious.
A spherical $2e$-design
for $\Cd$ is one for $\R^{2d}$, and so (either directly or
by substituting in), we have the bound 
$$ n \ge {e +2d -1\choose e}. $$
We now consider the complex analog of a spherical half-design, 
which comes naturally,
then give a much more general result (Theorem \ref{tightfisher}, \ref{CdlowerestCor1}).

$$ (Q_0+Q_1)^2 = {2d\over d+2} Q_2 + 2Q_1+(d+1)Q_0. $$

Let $\cS_k$ be the (lower) set of multi-indices of size $\le k$, i.e.,
$\cS_k:=\{(p,q):p+q\le k\}.$ 
A simple calculation shows that
$$ \cS_k \cdot\cS_l = \cS_{k+l}, \qquad \cS_k^\rev=\cS_k. $$
The spherical $\cS_t$-designs for $\Cd$ are equivalent to 
spherical $t$-designs for $\RR^{2d}$.

\begin{example} (Real spherical $t$-designs).
For $\cU=\cS_e$, we have an $\cS_{2e}$-design, i.e., 
a real spherical $(2e)$-design for $\RR^{2d}$, 
with the bound
$$ n\ge \sum_{p+q\le e} Q_{pq}^{(d)}(1)=\sum_{j=0}^e Q_j^{(2d)}(1)
={e+2d-1\choose 2d-1}+{e+2d-2\choose 2d-1}. $$
In particular, for $\cU=\cS_1$, we have $n\ge 2d+1$,
and $f_\cU(z)=2x+{1\over d}$, which has
roots $z=x=-{1\over 2d}$. Hence the tight $\cS_2$-designs for $\Cd$
are precisely the regular simplices of $2d+1$ vectors for $\RR^{2d}\approx\Cd$.
In \cite{RS14} (Table I, Example 7.4), there appears to be
an correctly claimed tight example for $e=2$, i.e., 
an $\cS_4$-design of $d(2d+3)$ vectors for $\Cd$.
\end{example}

Here is another example which is not a lower set.

\begin{example} (Real tight frames) 
Let $\cU=\{(1,0),(0,1)\}$,
then
$$
\tau:=\cU\cdot\cU^\rev = \{(0,0),(2,0),(1,1),(0,2)\},\quad
P=\Hom_2(\Cd)|_\SS, $$
and the $\tau$-designs are precisely the unit norm tight 
frames for $\RR^{2d}$ (see 
after Example \ref{complextightframeex}). The corresponding bound
$n\ge 2d$ is achieved by the orthonormal bases for $\RR^{2d}$.
\end{example}

We now give an analogue of Example \ref{fisher3}.

\begin{example} Let
$\cU=\{(0,0),(1,1),\ldots,(e,e)\}$. Then
$$\tau= \cS_1\cdot(\cU\cdot\cU^\rev) 
= \cS_1\cdot\{(0,0),(1,1),\ldots,(2e,2e)\}
=\{(p,q):p+q\le 2e+1,|p-q|\le1\}, $$
and the number of points in a $\tau$-design satisfies
$$ n \ge 2 \sum_{j=0}^e Q_{jj}(1)
= 2\dim(\Hom(e,e))=2 {e+d-1\choose d-1}^2. $$
\end{example}

Here, in the complex case,
$F(z)$ is
a polynomial in $z$ and $\overline{z}$, 
and so its zero set need not be finite, e.g., $F(z)=|z|^2-c$, $0<c<1$.
The following example is the Theorem 6.1 of \cite{DGS75}, which we
have generalised to Theorem \ref{realfisherbounds}. 
It considers polynomials $F=\sum f_{pq}Q_{pq}$, 
with the restriction that $|p-q|\le1$, cf. (\ref{DQform}).

\begin{example} ($A$-sets)
Let $A\subset(0,1)$, with $|A|=s$, and 
$$ F(z) := \prod_{\ga\in A} {|z|^2-\ga^2\over1-\ga^2}
= \sum_{k=0}^s f_{kk} Q_{kk}(z). $$
For an $X$ satisfying $|\inpro{\xi,\eta}|\in A$ (called an {\bf $A$-set},
or said to be {\bf $s$-angular}), 
we have
$$ n=|X| \le \sum_{(k,k)\in L} \dim(H(k,k)) \le \dim(\Hom_{s,s}(\Cd))
= {d+s-1\choose d-1}^2, $$
where  $L=\supp(F)\subset\{(0,0),\ldots,(s,s)\}$. For $A=\{\ga\}$, we have
$$ F :={Q_{11}(z)\over Q_{11}(1)} = {d|z|^2-1\over d-1}, \qquad
\ga={1\over\sqrt{d}}, \quad n=d^2-1,  $$
which is achieved for $d=2$ by taking one vector from each orthonormal
basis in a maximal set of MUBs (there is no tight set of $8$ equiangular
lines in $\CC^3$), and
$$
F={Q_{00}(z)+Q_{11}(z)\over Q_{00}(1)+Q_{11}(1)}  = {d|z|^2+|z|^2-1\over d},
\qquad \ga={1\over\sqrt{d+1}},\quad n =d^2, $$
which is achieved for a SIC.
For $A$-codes, with $A=\{0\}\cup A^*$, taking
$$ F(z) := z \prod_{\ga\in A^*} {|z|^2-\ga^2\over1-\ga^2}
= \sum_{k=0}^{s-1} f_{k+1,k} Q_{k+1,k}(z), $$
gives the estimate
$$ n=|X| \le \sum_{(k+1,k)\in L} \dim(H(k+1,k)) \le \dim(\Hom_{s,s-1}(\Cd))
= {d+s-1\choose d-1}{d+s-2\choose d-1}, $$
where  $L=\supp(F)\subset\{(1,0),\ldots,(s,s-1)\}$. 
For $A=\{0\}$, this gives the estimate $n\le d$, which is
achieved by an orthonormal basis.
\end{example}


\begin{example}
For real $A$-sets of size $s$ the bound is
$$ n \le Q_0(0)+Q_2(0)+\cdots+Q_{2s}(0)
= \dim(\Hom_{2s}(\Rd)) = {2s+d-1\choose d-1}, $$
and if $0\in A$,
$$ n \le Q_1(0)+Q_3(0)+\cdots+Q_{2s-1}(0)
= \dim(\Hom_{2s-1}(\Rd)) = {2s+d-2\choose d-1}. $$
In particular, the maximal number of real equiangular lines is
$\le Q_0(0)+Q_2(0)= {1\over 2}d(d+1)$, where $\ga={1\over\sqrt{d+2}}$
when there is equality.
\end{example}

\begin{example} For MUB like sets, we have
$$ F(z)= z(|z|^2-\ga^2) = f_{10}Q_{10}(z)+f_{21}Q_{21}(z), $$
which gives the estimate
$$ n \le Q(1,0)+Q(2,1) = d + {1\over 2}(d+2)(d-1) = {1\over 2}d^2(d+1). $$
This bound is poor for MUBs, but is achieved for 
$d=4,n=40$ and $d=6,n=126$ for the highly symmetric tight frames
given by Hughes and Waldron (as well as maximal MUBs for $d=2$).
All three examples are $(3,3)$-designs, which are not
$\{(2,1),(1,0)\}$-designs.

For real MUB like sets the estimates are
$$ n\le Q(1)+Q(3) = d+{1\over 6}d(d-1)(d+4) 
= {1\over 6} d(d+1)(d+2), $$
except the case
$$ n \le Q(3) = {1\over 6}d(d-1)(d+4), \qquad
|\ga|= {3\over\sqrt{3d+6}}. $$
This (first estimate) is attained for $d=8,n=120$.  
Interestingly, here the size of the angle is $|\ga|={3\over\sqrt{3d+12}}$,
which is first rational for $d=8$ (then $d=23,33,71,104,\ldots$).
\end{example}

\section{Forces and our potential}

It is possible to view the (unit-norm) tight frames as the minimisers
of a potential (in the classical sense) given by a force between vectors
on the sphere. Following \cite{Fic01},
we now show that this idea of the {\it frame force} extends 
to our ``potentials'', thereby justifying the terminology.

A {\bf central force} between distinct points $a$ and $b$ on the real sphere
is given by 
$$ \cF(a,b) := f(\norm{a-b})(a-b)\in\Rd, $$
where $f:(0,2]\to\RR$ is continuous, 
and the {\bf potential energy} between the points is 
$$ \cP(a,b)=p(\norm{a-b}), \qquad
\hbox{where $p'(x)=-xf(x)$}. $$
The {\bf total potential} between the points $\Phi=(v_j)$ is
$$ \TP(\Phi):=\sum_{j,k\atop j\ne k} \cP(v_j,v_k). $$
The potential, and hence total potential is defined up to a 
constant. The local extrema of the total potential are {\it critical }
under the frame force, i.e., the total force exerted on $v_k$ by the other
points $v_j$ is in the direction $v_k$
$$\sum_{j\ne k}\cF(v_j,v_k)=c v_k, \qquad\hbox{where $c$ is a scalar}. $$

For a polynomial $F=\sum_k c_k Q_k$ giving a potential 
in the sense of (\ref{AwFdef}),
define a central force by
\begin{equation}
\label{fdefn}
f(x) = \sum_k c_k Q_k'(1-{1\over2}x^2).
\end{equation}
Then the corresponding potential is given by
$$ p(x) = \sum_k c_k Q_k(1-{1\over2}x^2). $$
Therefore the potential energy between $v_j$ and $v_k$ is
$$ \cP(v_j,v_k) = p(\norm{a-b})
= \sum_k c_k Q_k(1-{1\over2}\norm{v_j-v_k}^2)
= \sum_k c_k Q_k(\inpro{v_j,v_k}) 
= F(\inpro{v_j,v_k}). $$
Hence
\vskip-0.9truecm
\begin{align*}
A_{F,w}(\Phi) &= \sum_{j,k} w_jw_k F(\inpro{v_j,v_k})
= \sum_{j,k\atop j\ne k} w_jw_k \cP(v_j,v_k) + \sum_j w_j^2 F(1) \cr
&= {1\over n^2} \TP(\Phi)+ {1\over n}F(1) \quad
\hbox{for constant weights},
\end{align*}
i.e., 
for constant weights the potential $A_{F,w}(\Phi)$ 
for a unitarily invariant polynomial space $P_L$ is the total potential 
for the central force given by (\ref{fdefn})
(up to a positive scalar multiple and constant).
One could also develop a theory for weighted total potentials.

For the particular case of tight frames (Example \ref{realtightframes}),
the {\it frame force}
$$ \cF(a,b) =\Bigl(1-{\norm{a-b}^2\over2}\Bigr)(a-b)
= \inpro{a,b}(a=b)$$
is given by $f(x)=Q_2'(1-{1\over2}x^2)=d(d+2)(1-{1\over2}x^2)$.

\section{Concluding remarks}

Our method is applicable in all cases, though the calculations can be 
technical at times. Many particular cases are found in the literature, 
proved in a variety of ways. We give a completely general method.

\section{Old stuff}

The weighted version of (\ref{genFpot}) is
$$
f_0 + A_{F^+}(X) - A_{F^-}(X)
 = F(1)\Bigl(\sum_{j}w_j^2\Bigr)
+ \sum_{\ga\in A} F(\ga) 
\Bigl(\sum_{j,k\atop\inpro{v_j,v_k}=\ga} w_jw_k\Bigr), $$
which generalises (\ref{DGSineq1}) to
$$ {1\over \sum_{j}w_j^2} \le {F(1)\over f_0}. $$


Let $X$ be a set (or a sequence) of $n$ vectors on the real sphere $\SS$.
In \cite{DGS77},
$H_k=[W_{k,i}(\xi)]$ is the $n\times\dim(\Harm_k(\Rd))$ matrix
whose columns are an orthonormal basis for $\Harm_k(\Rd)$ sampled
at the points $\xi\in X$, and one has
\begin{equation}
\label{Hkstuff}
H_k H_k^T = [Q_k(\inpro{\xi,\eta})]_{\xi,\eta\in X}, \qquad
(H_k^T H_0)_{i,0} = \sum_\xi W_{k,i}(\xi).
\end{equation}
If $F=\sum_k f_k Q_k$ is a univariate polynomial, 
with Gegenbauer coefficients $f_k$, then we call
$$ F=f_0+F^+-F^-,
\qquad F^+:=\sum_{k\ne 0\atop f_k>0} f_k Q_k, \quad
F^-:= -\sum_{k\ne0\atop f_k<0} f_k Q_k, $$
its decomposition into potentials (for $\Rd$).
From (\ref{Hkstuff}), one has
\begin{equation}
\label{FGram}
\sum_{k=0}^\infty f_k H_kH_k^T 
= \sum_{k=0}^\infty f_k [Q_k(\inpro{\xi,\eta})] 
= [F(\inpro{\xi,\eta})].
\end{equation}
Summing the entries of the $n\times n$ matrices in (\ref{FGram}) 
gives
\begin{equation}
\label{genFpot}
n^2 f_0 + n^2 A_{F^+}(X) - n^2 A_{F^-}(X)
=\sum_{\ga\in A} F(\ga) m_\ga, \quad
A=\{\inpro{\xi,\eta}:\xi,\eta\in X\},
\end{equation}
where 
$m_\ga$ is the multiplicity of the inner product $\ga=\inpro{\xi,\eta}$
in the Gramian of $X$, and 
$A_F(X)$ is the potential given by $F=\sum_k f_k Q_k$, $f_k\ge0$, 
for equal weights $w_j={1\over n}$, i.e.,
\begin{equation}
\label{AFdefn}
A_F(X) 
={1\over n^2} \sum_{k\in L} f_k \sum_{\xi,\eta\in X} Q_k(\inpro{\xi,\eta})
={1\over n^2} \sum_{k\in L} f_k \norm{H_k^TH_0}^2, \quad
L=\{k\ge 1:f_k>0\}.
\end{equation}

$$ Q_k^\gep(x) = {(N)_{2k+\gep}\over(m)_{k+\gep}k!}\sum_{i=0}^k(-1)^i{k\choose i}
{{}_i(k+m+\gep-1)\over{}_i(2k+N+\gep-2)}x^{k-i}, \qquad m=1,\ N=md=d, $$

\begin{align*}
\inpro{Q_{kk}(z),Q_{jj}(z)}_\cgeg 
&= \inpro{Q_k^0(|z|^2),Q_j^0(|z|^2)}_\cgeg \cr
&= 2(d-1) \int_0^1 Q_k^0(r^2) Q_j^0(r^2) \,(1-r^2)^{d-2} r\,dr \cr
&= (d-1) \int_0^1 Q_k^0(t) Q_j^0(t) \,(1-t)^{d-2} \,dt
\end{align*}
$$ \inpro{Q_{k+1,k}(z),Q_{j+1,j}(z)}_\cgeg 
= (d-1) \int_0^1 Q_k^0(t) Q_j^0(t) \,t(1-t)^{d-2} \,dt $$

$$ R_k^\gep(x) = {(N)_{2k+\gep}\over(m)_{k+\gep}k!}\sum_{j=0}^k(-1)^j{k\choose j}
{{}_j(k+m+\gep-1)\over{}_j(2k+N+\gep-1)}x^{k-j}, $$
The coefficient of $(-1)^jx^{k-j}$ is
$$ {(N)_{2k+\gep}\over(m)_{k+\gep}k!} {k\choose j}
{{}_j(k+m+\gep-1)\over{}_j(2k+N+\gep-1)} $$
$$ = {(N)\cdots (N+2k+\gep-1)\over(m)(m+1)\cdots(m+k+\gep-1)k!} {k!\over j!(k-j)!}
{(k+m+\gep-1)\cdots (k-m+\gep-1-j+1)\over(2k+N+\gep-1)\cdots (2k+N+\gep-1-j+1)} $$
$$ = {(N)\cdots (N+2k+\gep-1)\over(m)(m+1)\cdots(m+k+\gep-1)} {1\over j!(k-j)!}
{ (k+m+\gep-j) \cdots (k+m+\gep-1) \over (2k+N+\gep-j) \cdots (2k+N+\gep-1) } $$
$$ = { (N)\cdots (N+2k+\gep-1) \over (2k+N+\gep-j) \cdots (2k+N+\gep-1) } {1\over j!(k-j)!}
 { (k+m+\gep-j) \cdots (k+m+\gep-1) \over (m)(m+1)\cdots(m+k+\gep-1) } $$
$$ = { (N)\cdots (N+2k+\gep-j-1) } {1\over j!(k-j)!}
 { 1 \over (m)(m+1)\cdots(m+k+\gep-j-1) } $$
$$ = { (N+2k+\gep-j-1)!\over(N-1)! } {1\over j!(k-j)!}
 { (m-1)! \over (m+k+\gep-j-1)! } $$

\begin{align}
\label{univariateHomkekequation}
\sum_{j=0}^k Q_{2j+\gep}(x) 
&= C_{2k+\gep}^{({d\over2})}(x) \cr
& = \dim\bigl(\Hom_{2k+\gep}(\Rd)\bigr)
 x^\gep { P_{k}^{({d-1\over2},-{1\over2}+\gep )}(2x^2-1) \over 
P_{k}^{({d-1\over2},-{1\over2}+\gep )}(1) } \cr
&= \sum_{j=0}^k (-1)^j {d(d+2)\cdots(d+2(2k+\gep)-2j-2) \over 2^j j!(2k+\gep-2j)!} x^{2k+\gep-2j}, 
\end{align}

The first formula in (\ref{univariateHomkekequation}) is the (canonical) potential 
for spherical half-designs of order $m=2k+\gep$ given in Example xx

We now see that the tight spherical half-designs of $2e$
and the tight spherical $(2e+1)$-designs are in a one-to-one 
correspondence.
 
\begin{example} 
\label{fisher3}
(Spherical $(2e+1)$-designs)
Let $m=2e+1$ be odd, and
$$ F=\Bigl({Q_1\over d}+1\Bigr)(Q_e+Q_{e-2}+Q_{e-4}+\cdots)^2,
\qquad
P_L=\Pi_m(\Rd)|_\SS,\quad L=\{0,1,2,\ldots,m\}. $$
Since the first factor is ${Q_1(x)\over d}+1=x+1\ge0$, $x\in[-1,1]$,
the polynomial $F$ gives a suitable nonnegative
potential, and we obtain the estimate
$$ n \ge {F(1)\over f_0} = 2(Q_e(1)+Q_{e-2}(1)+\cdots)
= 2\dim(\Hom_e(\Rd)|_\SS) 
= 2{ e+d-1\choose d-1}. $$
for the number of vectors $n$ in a spherical $(2e+1)$-design.
Here, when the design is tight, it consists of a tight spherical 
half-design of order $2e$ and the negatives of its vectors.
\end{example}

$$ z={Q_{10}(z)\over d}, \qquad   \overline{z}={Q_{01}(z)\over d}, \qquad
 |z|^2 = {Q_{11}(z)\over d(d+1)}+{1\over d}, $$
so that
$$ F(z)= |z-\ga|^2+|z- $$

For the complex case, we need annihilator polynomials, which are potentials.
At the very least they must satisfy, $\overline{F(z)}=F(\overline{z})$ 
and take real-values on $\RR$. 
The choices
$$ F(z)=z-\ga, \quad F(z)=|z-\ga|^2, $$
fail this, unless $\ga$ is real (in which case the frame is real, 
and presumably no better bounds are obtained).
One could take 
$$F(z)= z-\ga+\overline{z-\ga} = 2\Re(z)-2\Re(\ga)
={Q_{10}(z)\over d}+{Q_{01}(z)\over d}-2\Re(\ga) Q_{00}(z), $$
$$ \Implies n\le {2-2\Re(\ga)\over -2\Re(\ga)}
= {1-\Re(\ga)\over -\Re(\ga)}, \qquad \Re(\ga)<0, $$
to get bounds involving the real part of the angle.
One might also take a polynomial with $\ga$ and $\overline{\ga}$ as roots,
$$ F(z)=|z-\ga|^2|z-\overline{\ga}|^2, \qquad
F(z)=(z-\ga)(z-\overline{\ga}), $$
$$ (z-\ga)(z-\overline{\ga})=z^2-2\Re(\ga)z+|\ga|^2
= {Q_{20}(z)\over{d(d+1)\over2}} -2\Re(\ga) {Q_{10}(z)\over d} +|\ga|^2 Q_{00}(z), $$
$$ \Implies n \le {(1-\ga)(1-\overline{\ga})\over |\ga|^2}, \qquad
\Re(\ga)<0, $$

Consider first the case when real angles are prescribed as roots
of an annihilator polynomial
$$ F(x)=\prod_j {x-r_j\over 1-r_j}, $$
for values $r_1,\ldots,r_m\ne1$. For one value
$$ F={1\over d(1-r_1)} Q_1+{-r_1\over 1-r_1} Q_0, $$
so we require $-r_1>0$, i.e., $r_1<0$, and we have the special bound
$$ n \le {1-r_1\over -r_1}. $$
For two values
$$ F={2\over d(d+2)(1-r_1)(1-r_2)} Q_2+{-r_1-r_2\over d (1-r_1)(1-r_2)} Q_1
+{dr_1r_2+1\over d(1-r_1)(1-r_2)} Q_0, $$
so we require
$$ -r_1-r_2\ge0, \qquad r_1r_2>-{1\over d}, $$
and we have the bound
$$ n \le {d(1-r_1)(1-r_2)\over dr_1r_2+1}. $$
For $r_1=\sqrt{\ga}$, $r_2=-\sqrt{\ga}$, the bound becomes
$$ n \le {d-\ga\over 1-d\ga}, \qquad \ga<{1\over d}. $$
For three values 
\begin{align*} 
F &= {6\over d(d+2)(d+4)(1-r_1)(1-r_2)(1-r_3)} Q_3
+{-2(r_1+r_2+r_3)\over d(d+2)(1-r_1)(1-r_2)(1-r_3)} Q_2 \cr
&+{(d+2)(r_1r_2+r_1r_3+r_2r_3)+3\over d(d+2)(1-r_1)(1-r_2)(1-r_3)} Q_1
+{-dr_1r_2r_3-(r_1+r_2+r_3)\over d(1-r_1)(1-r_2)(1-r_3)} Q_0,
\end{align*}
$$ n \le {d(1-r_1)(1-r_2)(1-r_3)\over -dr_1r_2r_3-(r_1+r_2+r_3)}, $$
For four vectors the $f_0$ term is
$$ {(d^2+2d) r_1r_2r_3r_4 +(d+2 )(r_1r_2+\cdots)+3\over d(d+2) (1-r_1)\cdots} $$

$$ (x-r)_1=\prod_{j=1}^m (x-r_j)
= c_m^{(m)} Q_m + c_{m-1}^{(m)} Q_{m-1} + c_{m-2}^{(m)} Q_{m-2}  +\cdots  $$
It appears that
\begin{align*}
c_{m-j}^{(m)}
&= (-1)^j {(m-j)!\over [d]_{m-j}} s_j^{(m)}(r) 
+(-1)^j  {(m+2-j)!\over 2 [d]_{m+1-j}} s_{j-2}^{(m)}(r) 
+(-1)^j  {(m+4-j)!\over 8 [d]_{m+2-j}} s_{j-4}^{(m)}(r)  \cdots\cr
&= (-1)^j 
\sum_{0\le k\le {j\over2}}
{(m+2k-j)!\over 2^k k! [d]_{m+k-j}} s_{j-2k}^{(m)}(r) .
\end{align*}
where $s_k^{(m)}(r)=s_k^{(m)}(r_1,\ldots,r_m)$ is the elementary symmetric 
polynomial of degree $k$ in the variables $r_1,\ldots,r_m$, and 
$$ [d]_k := d(d+2)(d+4)\cdots(d+2k-2). $$
The coefficients are integers (as they should be), so perhaps 
it could be written to reflect this, e.g.,
$1\cdot3\cdots (2k-1)\times (2k+1)_{m-j}$.

$$ c_{m-j}^{(m)} = (-1)^j {(m-j)!\over [d]_{m-j}} s_j^{(m)}(r) 
+ (-1)^j {m+2-j\choose 2} {(m-j)!\over [d]_{m-j+1}} s_{j-2}^{(m)}(r) $$
$$ + (-1)^j 3 {m+4-j\choose 4} {(m-j)!\over [d]_{m-j+2}} s_{j-4}^{(m)}(r) $$

$$ c_{m-2}^{(m)}
=  {(m-2)!\over [d]_{m-2}} \Bigl\{ s_2^{(m)}(r) + {{m\choose2} s_{0}^{(m)}(r) \over d+2m-4}. $$

$$ c_{m-3}^{(m)}
=  {(m-3)!\over [d]_{m-3}} \Bigl\{ s_3^{(m)}(r) + {{m\choose2} s_{1}^{(m)}(r) \over d+2m-4}. $$

If you look at our results, then the functions $F$ have real Gegenbauer 
coefficients, and so they must be real-valued when restricted to $\RR$.

For $A\subset[0,1]$

$$ F(z)=\Bigl({z+\overline{z}\over 2}+1\Bigr)^\gep
\prod_{\ga\in A} {|z|^2-\ga\over1-\ga}, $$

We now give the estimate of \cite{DGS75} for the number of vectors
in an $A$-set in full generality. 
This is a variant of Theorem \ref{Acodebnd}.
In 
\cite{DGS75}, 
the polynomials $Q_{k,\gep}=:Q_k^\gep$,
$\gep\in\{0,1\}$ are considered (we use the latter notation to avoid
confusion with our $Q_{pq}$). These are related by
\begin{equation}
\label{DQform}
x^\gep Q_k^\gep(x^2) = Q_{2k+\gep}(x), \qquad
z^\gep Q_k^\gep(|z|^2) = Q_{k+\gep,k}(z). 
\end{equation}

As in the real case, let
$H_{pq}=[W_{pq,i}(\xi)]$ be the $n\times\dim(H(p,q))$ matrix
whose columns are an orthonormal basis for $H(p,q)$ sampled
at the points $\xi\in X$, and (\ref{Hkstuff}) generalises to
\begin{equation}
\label{Hpqstuff}
H_{pq} H_{pq}^* = [Q_{pq}(\inpro{\xi,\eta})]_{\xi,\eta\in X}, \qquad
(H_{pq}^* H_{00})_{j,1} = \sum_\xi W_{pq,j}(\xi).
\end{equation}
For $F=\sum_{(p,q)} f_{pq}Q_{pq}$ a polynomial,
(\ref{FGram}) generalises to 
\begin{equation}
\label{FcomGram}
\sum_{(p,q)\in L} f_{pq} H_{pq}H_{pq}^* 
= \sum_{(p,q)\in L} f_{pq} [Q_{pq}(\inpro{\xi,\eta})] 
= [F(\inpro{\xi,\eta})].
\end{equation}
We also have (for the Frobenius norm) 
$\norm{H_{pq}}^2=\trace(H_{pq}H_{pq}^*)=nQ_{pq}(1)$, so that
$$ \norm{H_{pq}^*H_{pq}-nI}^2 
= \norm{H_{pq}^*H_{pq}}^2  -2n\norm{H_{pq}}^2 +n^2\norm{I}^2
= \norm{H_{pq}^*H_{pq}}^2  - n^2 Q_{pq}(1), $$
and 
\begin{align*}
\norm{H_{pq}^*H_{kl}}^2 
&= \trace( H_{pq}^*H_{kl} H_{kl}^*H_{pq})
= \trace\bigl( (H_{pq} H_{pq}^*)(H_{kl} H_{kl}^*) \bigr) \cr
&= \sum_\xi \sum_\eta (H_{pq} H_{pq}^*)_{\xi,\eta}(H_{kl} H_{kl}^*)_{\eta,\xi}
= \sum_\xi \sum_\eta Q_{pq}(\inpro{\xi,\eta}) Q_{kl}(\inpro{\eta,\xi}).
\end{align*}


\bibliographystyle{alpha}
\bibliography{references}
\nocite{*}



\vfil
\end{document}

The reproducing kernel for $\Hom_{1,1}(\Cd)$.
We observe that $\Hom_{1,1}(\Cd)=H(1,1)\oplus H(0,0)$ is the orthogonal direct sum of
the subspaces $U$ and $V$, where
$$ U:=\bigoplus_{\{j,k\}} U_{\{j,k\}}, \quad
U_{\{j,k\}}:=\spam\{z_j\overline{z_k}, \overline{z_j}z_k\}, \qquad
V:=\spam\{ z_j\overline{z_j}:j=1,\ldots,d\}. $$
An orthogonal basis for $U_{\{j,k\}}$ is given by
$\{z_j\overline{z_k}-\overline{z_j}z_k,z_j\overline{z_k}+\overline{z_j}z_k\}$,
where
$$ \norm{z_j\overline{z_k}\pm\overline{z_j}z_k}_\SS^2 =
2\int_\SS |z_j|^2|z_k|^2 = {2\over d(d+1)}, \qquad
\int_\SS |z_j|^2 = {1\over d}, \qquad
\int_\SS |z_j|^4 = {2\over d(d+1)},
 $$
we have the simplification
$$ 
(z_j\overline{z_k}+\overline{z_j}z_k) (\overline{w_j}w_k+w_j\overline{w_k})
+(z_j\overline{z_k}-\overline{z_j}z_k) (\overline{w_j}w_k-w_j\overline{w_k})
= 2( z_j\overline{z_k}\overline{w_j}w_k + \overline{z_j}z_kw_j\overline{w_k}), $$
so that the reproducing kernel for $U$ is
\begin{align*}
K_U(z,w) 
&= {1\over2} d(d+1) \sum_j\sum_k (z_j\overline{z_k}\overline{w_j}w_k + \overline{z_j}z_kw_j\overline{w_k}) 
- {1\over2} d(d+1) \sum_j (2|z_j|^2|w_j|^2) \cr
&= d(d+1) \inpro{z,w}\inpro{w,z} 
- d(d+1) \sum_j |z_j|^2|w_j|^2 
\end{align*}
Motivated by this, and the symmetries of $V$, we speculate 
the reproducing kernel
$$ K_V(z,w) := d(d+1) \sum_j |z_j|^2|w_j|^2 -d, $$
which can be verified.
Hence the reproducing kernel is
$ K(z,w) = d(d+1)\inpro{z,w}\inpro{w,z}-d$,
and the reproducing kernel for 
$H(1,1)=\Hom(1,1)\ominus H(0,0)$, which gives the potential, is
$$ K_{H(1,1)}(z,w) = d(d+1)\inpro{z,w}\inpro{w,z}-d-1
= d(d+1)\Bigl\{|\inpro{z,w}|^2-{1\over d}\Bigr\}. $$

The Gramian for a weighted spherical $(2,2)$-design 
of $16$-vectors for $\RR^5$
is given by the Gram matrix $\gL Q\gL$, where (it took me ages to 
write down $Q$)
\setcounter{MaxMatrixCols}{20}
$$ Q=\pmat{
1&{1\over5}&{1\over\sqrt{5}}&-{1\over\sqrt{5}}&{-1\over\sqrt{5}}&
{1\over5}&{1\over5}&{-1\over\sqrt{5}}&{1\over\sqrt{5}}&{-1\over5}&
{-1\over\sqrt{5}}&{1\over\sqrt{5}}&{1\over\sqrt{5}}&{-1\over\sqrt{5}}&
{1\over\sqrt{5}}&{1\over5} \cr
{1\over5}&1&{-1\over\sqrt{5}}&{-1\over\sqrt{5}}&{-1\over\sqrt{5}}&
{-1\over5}&{-1\over5}&{1\over\sqrt{5}}&{-1\over\sqrt{5}}&{1\over5}&
{-1\over\sqrt{5}}&{-1\over\sqrt{5}}&{1\over\sqrt{5}}&{-1\over\sqrt{5}}&
{1\over\sqrt{5}}&{-1\over5} \cr
{1\over\sqrt{5}}&{-1\over\sqrt{5}}&1&{1\over3}&{-1\over3}&{-1\over\sqrt{5}}&
{1\over\sqrt{5}}&{-1\over3}&{1\over3}&{-1\over\sqrt{5}}&{-1\over3}&
{1\over3}&{-1\over3}&{1\over3}&{1\over3}&{1\over\sqrt{5}} \cr
{-1\over\sqrt{5}}&{-1\over\sqrt{5}}&{1\over3}&1&{1\over3}&{-1\over\sqrt{5}}&
{-1\over\sqrt{5}}&{1\over3}&{1\over3}&{-1\over\sqrt{5}}&{1\over3}&
{-1\over3}&{-1\over3}&{1\over3}&{1\over3}&{1\over\sqrt{5}} \cr
{-1\over\sqrt{5}}&{-1\over\sqrt{5}}&{-1\over3}&{1\over3}&1&{1\over\sqrt{5}}&
{-1\over\sqrt{5}}&{-1\over3}&{1\over3}&{-1\over\sqrt{5}}&{1\over3}&
{-1\over3}&{1\over3}&{1\over3}&{-1\over3}&{-1\over\sqrt{5}} \cr
{1\over5}&{-1\over5}&{-1\over\sqrt{5}}&{-1\over\sqrt{5}}&{1\over\sqrt{5}}&
1&{-1\over5}&{-1\over\sqrt{5}}&{1\over\sqrt{5}}&{1\over5}&{1\over\sqrt{5}}&
{1\over\sqrt{5}}&{1\over\sqrt{5}}&{-1\over\sqrt{5}}&{-1\over\sqrt{5}}&
{-1\over5} \cr
{1\over5}&{-1\over5}&{1\over\sqrt{5}}&{-1\over\sqrt{5}}&{-1\over\sqrt{5}}&
{-1\over5}&1&-{1\over\sqrt{5}}&{-1\over\sqrt{5}}&{1\over5}&
{-1\over\sqrt{5}}&{1\over\sqrt{5}}&{-1\over\sqrt{5}}&{1\over\sqrt{5}}&
{-1\over\sqrt{5}}&{-1\over5} \cr
{-1\over\sqrt{5}}&{1\over\sqrt{5}}&{-1\over3}&{1\over3}&{-1\over3}&
{-1\over\sqrt{5}}&{-1\over\sqrt{5}}&1&{-1\over3}&{1\over\sqrt{5}}&
{1\over3}&{-1\over3}&{-1\over3}&{-1\over3}&{1\over3}&{1\over\sqrt{5}} \cr
{1\over\sqrt{5}}&{-1\over\sqrt{5}}&{1\over3}&{1\over3}&{1\over3}&
{1\over\sqrt{5}}&{-1\over\sqrt{5}}&{-1\over3}&1&{-1\over\sqrt{5}}&
{1\over3}&{1\over3}&{1\over3}&{-1\over3}&{1\over3}&{1\over\sqrt{5}} \cr
{-1\over5}&{1\over5}&{-1\over\sqrt{5}}&{-1\over\sqrt{5}}&{-1\over\sqrt{5}}&
{1\over5}&{1\over5}&{1\over\sqrt{5}}&{-1\over\sqrt{5}}&1&{1\over\sqrt{5}}&
{1\over\sqrt{5}}&{-1\over\sqrt{5}}&{-1\over\sqrt{5}}&{-1\over\sqrt{5}}&
{1\over5} \cr
{-1\over\sqrt{5}}&{-1\over\sqrt{5}}&{-1\over3}&{1\over3}&{1\over3}&
{1\over\sqrt{5}}&{-1\over\sqrt{5}}&{1\over3}&{1\over3}&{1\over\sqrt{5}}&
1&{1\over3}&{-1\over3}&{-1\over3}&{-1\over3}&{1\over\sqrt{5}} \cr
{1\over\sqrt{5}}&{-1\over\sqrt{5}}&{1\over3}&{-1\over3}&{-1\over3}&
{1\over\sqrt{5}}&{1\over\sqrt{5}}&{-1\over3}&{1\over3}&{1\over\sqrt{5}}&
{1\over3}&1&{-1\over3}&{-1\over3}&{-1\over3}&{1\over\sqrt{5}} \cr
{1\over\sqrt{5}}&{1\over\sqrt{5}}&{-1\over3}&{-1\over3}&{1\over3}&
{1\over\sqrt{5}}&{-1\over\sqrt{5}}&{-1\over3}&{1\over3}&{-1\over\sqrt{5}}&
{-1\over3}&{-1\over3}&1&{-1\over3}&{1\over3}&{-1\over\sqrt{5}} \cr
{-1\over\sqrt{5}}&{-1\over\sqrt{5}}&{1\over3}&{1\over3}&{1\over3}&
{-1\over\sqrt{5}}&{1\over\sqrt{5}}&{-1\over3}&{-1\over3}&{-1\over\sqrt{5}}&
{-1\over3}&{-1\over3}&{-1\over3}&1&{-1\over3}&{-1\over\sqrt{5}} \cr
{1\over\sqrt{5}}&{1\over\sqrt{5}}&{1\over3}&{1\over3}&{-1\over3}&
{-1\over\sqrt{5}}&{-1\over\sqrt{5}}&{1\over3}&{1\over3}&{-1\over\sqrt{5}}&
{-1\over3}&{-1\over3}&{1\over3}&{-1\over3}&1&{1\over\sqrt{5}} \cr
{1\over5}&{-1\over5}&{1\over\sqrt{5}}&{1\over\sqrt{5}}&{-1\over\sqrt{5}}&
{-1\over5}&{-1\over5}&{1\over\sqrt{5}}&{1\over\sqrt{5}}&{1\over5}&
{1\over\sqrt{5}}&{1\over\sqrt{5}}&{-1\over\sqrt{5}}&{-1\over\sqrt{5}}&
{1\over\sqrt{5}}&1
} $$
$$ \gL :=\diag(a,a,b,b,b,a,a,b,b,a,b,b,b,b,b,a), \qquad
a:=\left({20\over21}\right)^{1\over 4}, \quad
b:=\sqrt[4]{36\over35}. $$